\def\Xint#1{\mathchoice
{\XXint\displaystyle\textstyle{#1}}%
{\XXint\textstyle\scriptstyle{#1}}%
{\XXint\scriptstyle\scriptscriptstyle{#1}}%
{\XXint\scriptscriptstyle\scriptscriptstyle{#1}}%
\!\int}
\def\XXint#1#2#3{{\setbox0=\hbox{$#1{#2#3}{\int}$}
\vcenter{\hbox{$#2#3$}}\kern-.5\wd0}}
\def\dashint{\Xint-}
\newcommand{\ra}{\rightarrow}
\newcommand{\bey}{\begin{eqnarray*}}
\newcommand{\eey}{\end{eqnarray*}}
\newcommand{\beal}{\begin{align*}}
\newcommand{\enal}{\end{align*}}
\newcommand{\be}{\begin{equation}}
\newcommand{\ee}{\end{equation}}
\newcommand{\R}{\mathbb R}
\newcommand{\N}{\mathbb N}
\newcommand{\Lap}{\mathcal L }
\newcommand{\ep}{\epsilon}
\newcommand{\bc}{\begin{center}}
\newcommand{\ec}{\end{center}}
\newcommand{\vp}{\varphi}
\newcommand{\gensub}{U} 
\DeclareMathOperator{\supp}{supp}
\DeclareMathOperator{\Div}{div}
\DeclareMathOperator*{\esssup}{ess\,sup}
\DeclareMathOperator*{\essinf}{ess\,inf}
\DeclareMathOperator{\lip}{Lip}
\DeclareMathOperator{\osc}{osc}
\newcommand{\f}{\mathbf{f}}
\DeclareMathOperator{\dist}{dist}
\newcommand{\grad}{\nabla}
\newtheorem{theorem}{Theorem}[section]
\newtheorem{nonum}{Theorem}
\newtheorem{lemma}[theorem]{Lemma}
\newtheorem{corollary}[theorem]{Corollary}
\newtheorem{prop}[theorem]{Proposition}
\newtheorem*{nonum-prop}{Proposition}
\theoremstyle{definition}
\newtheorem{example}[theorem]{Example}
\newtheorem{definition}[theorem]{Definition}
\theoremstyle{remark}
\newtheorem{remark}[theorem]{Remark}
\numberwithin{equation}{section}
\begin{document}


\subjclass[2000]{Primary 42A50}

\keywords{$p$-Laplacian, H\"ormander vector fields, $A_p$ weights}

\thanks{The first author is supported by the Stewart-Dorwart Faculty Development Fund
 at Trinity College and
 grant MTM2012-30748 from the Spanish Ministry of Science and
 Innovation.  The second and third authors are partially supported by the NSF under grants DMS 1201504 and DMS 1101327 respectively}

\author{David Cruz-Uribe, SFO}
\address{Department of Mathematics\\
Trinity College, Hartford, CT 06106, USA.}\email{david.cruzuribe@trincoll.edu}

\author{Kabe Moen}
\address{Department of Mathematics\\
University of Alabama, Box 870350, 345 Gordon Palmer Hall.}\email{kmoen@as.ua.edu}

\author{Virginia Naibo}
\address{Department of Mathematics\\
Kansas State University, 138 Cardwell Hall, Manhattan, KS 66506
USA.}\email{vnaibo@math.ksu.edu}

\title[Regularity of solutions to $p$-Laplacian equations]{Regularity of solutions to degenerate $p$-Laplacian equations}

\date{September 26, 2012}

\begin{abstract}
We prove regularity results for solutions of the equation
\[  \Div\big(\langle AXu, X u\rangle^{\frac{p-2}{2}}
AX u\big) = 0, \]
$1<p<\infty$, where $X=(X_1,\ldots,X_m)$ is a family of vector fields
satisfying H\"ormander's condition, and $A$ is an $m\times m$ symmetric matrix that satisfies degenerate
ellipticity conditions.  If the degeneracy is of the form
\[ \lambda w(x)^{2/p}|\xi|^2\leq \langle A(x)\xi,\xi\rangle\leq
\Lambda w(x)^{2/p}|\xi|^2,
\]
$w \in A_p$, then we show that solutions are locally H\"older
continuous.  If the degeneracy is of the form 
\[ k(x)^{\frac{-2}{p'}}|\xi|^2\leq \langle A(x)\xi,\xi\rangle\leq
k(x)^{\frac2p}|\xi|^2,
\]
$k\in A_{p'}\cap RH_\tau$,where $\tau$ depends on the homogeneous
dimension,  then the solutions are continuous almost
everywhere, and we give examples to show that this is the best
result possible.  We give an application to maps of finite distortion.
\end{abstract}

\maketitle


\section{Introduction and main results}

In this paper we consider the regularity of solutions of degenerate quasilinear elliptic
equations that generalize the $p$-Laplacian
\begin{equation} \label{eqn:p-laplacian}
 \bigtriangleup_p u = \Div(|\grad u|^{p-2} \grad u) = 0,
\end{equation}
and the degenerate linear elliptic equations of the form
\begin{equation} \label{eqn:degen-elliptic}
 L u = \Div(A\grad u) = 0,
\end{equation}
where $A$ is an $n\times n$ real symmetric matrix that satisfies the
ellipticity condition
\begin{equation} \label{eqn:wtd-elliptic}
 \lambda w(x)|\xi|^2 \leq \langle A(x)\xi,\xi \rangle \leq \Lambda w(x)|\xi|^2,
\end{equation}
with $w$  in the Muckenhoupt class $A_2$.  In the simplest
case we prove the following results.  Throughout this paper, 
$\Omega$ will denote either a bounded open, connected subset of $\R^n$ or all
of $\R^n$.  For precise
definitions of the weight classes $A_{p}$ and $RH_t$, see
Section~\ref{tools} below.  

\begin{nonum}
Given $\Omega$ and $p$, $1<p<\infty$, 
let $A$ be an $n\times n $ real-symmetric matrix of measurable
functions defined in $\Omega$ such that for every $\xi\in \R^n$ and a.e. $x\in \Omega $,
\begin{equation} \label{eqn:ellipticityA}
\lambda w(x)^{2/p}|\xi|^2\leq \langle A(x)\xi,\xi\rangle\leq
\Lambda w(x)^{2/p}|\xi|^2,
\end{equation}
where $w\in A_p$ and $0<\lambda <\Lambda<\infty$.   Then every 
weak solution $u$ of the equation
\begin{equation} \label{eqn:diffA}
 \Lap_p u(x)  = -\Div\big(\langle A(x)\grad u(x), \grad u(x)\rangle^{\frac{p-2}{2}}
A(x)\grad u(x)\big) = 0
\end{equation}
is H\"older continuous on compact subsets of $\Omega$.   
\end{nonum}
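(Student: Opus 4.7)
The plan is to recast \eqref{eqn:diffA} as a weighted quasilinear equation of $p$-Laplacian type with an $A_p$ weight, and then invoke the De Giorgi--Moser regularity theory for such equations. Setting $\mathcal A(x,\xi) = \langle A(x)\xi,\xi\rangle^{(p-2)/2} A(x)\xi$, the equation becomes $\Div \mathcal A(x,\grad u)=0$. From \eqref{eqn:ellipticityA} I immediately obtain the coercivity
\[
\mathcal A(x,\xi)\cdot\xi = \langle A(x)\xi,\xi\rangle^{p/2} \geq \lambda^{p/2}\, w(x)|\xi|^p.
\]
The symmetry of $A(x)$ combined with the upper bound in \eqref{eqn:ellipticityA} gives $|A(x)\xi|\leq \Lambda w(x)^{2/p}|\xi|$, and therefore $|\mathcal A(x,\xi)|\leq C(p,\lambda,\Lambda)\, w(x)|\xi|^{p-1}$. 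Strict monotonicity of $\xi\mapsto\mathcal A(x,\xi)$ follows from convexity of $\xi\mapsto\langle A(x)\xi,\xi\rangle^{p/2}$ when $p\geq 2$, and from the standard inequality involving $(|\xi|+|\eta|)^{p-2}|\xi-\eta|^2$ when $1<p<2$. These are precisely the structure hypotheses of the Heinonen--Kilpel\"ainen--Martio framework, with degeneracy prescribed by $w\in A_p$.

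Next, I would work in the weighted Sobolev space $W^{1,p}(\Omega,w\,dx)$, defining weak solutions by pairing against test functions $\vp\in C_c^\infty(\Omega)$. Testing with $\eta^p(u-k)^\pm$ for a smooth cutoff $\eta$ and a truncation level $k$ yields a weighted Caccioppoli inequality of the form
\[
\int \eta^p |\grad (u-k)^\pm|^p\, w\, dx \leq C\int |\grad\eta|^p |(u-k)^\pm|^p\, w\, dx.
\]
Coupling this with the weighted Sobolev--Poincar\'e inequality --- which for $w\in A_p$ self-improves the left-hand exponent from $p$ to $p\sigma$ for some $\sigma>1$ depending only on the $A_p$ constant of $w$ --- and iterating produces local $L^\infty$ bounds via Moser's scheme. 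A De Giorgi oscillation-decay argument, based on the same Caccioppoli estimate applied to super- and sublevel sets, together with a density lemma controlling the $w$-measure of $\{u>k\}\cap B_r$ and the doubling property of $w\,dx$, then gives $\osc(u,B_{r/2})\leq \theta\,\osc(u,B_r)$ for some $\theta<1$. This is quantitatively equivalent to local H\"older continuity on compact subsets, with exponent $\alpha=\log_2(1/\theta)$.

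The main obstacle is establishing the weighted Sobolev--Poincar\'e inequality and the density/good-set lemma in a form robust enough to drive the iteration. Both are consequences of the $A_p$ hypothesis via its doubling and reverse-H\"older implications (as in Fabes--Kenig--Serapioni for the linear case $p=2$, extended to the nonlinear setting by Heinonen--Kilpel\"ainen--Martio). Once these weighted functional inequalities are in hand, the argument follows the classical De Giorgi--Moser proof and no feature of $A$ beyond the two-sided comparison with $w(x)^{2/p}|\xi|^2$ enters. The only mild subtlety specific to $p\neq 2$ is handling the monotonicity inequality in the two ranges $1<p<2$ and $p\geq 2$ separately, which does not affect the qualitative conclusion.
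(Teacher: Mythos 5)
Your proposal is correct for the statement as given (the Euclidean case $X=\grad$), and your structure checks are sound: the coercivity $\mathcal A(x,\xi)\cdot\xi=\langle A\xi,\xi\rangle^{p/2}\geq \lambda^{p/2}w|\xi|^p$ and the growth bound $|\mathcal A(x,\xi)|\leq C(p,\lambda,\Lambda)\,w|\xi|^{p-1}$ (for $1<p<2$ the factor $\langle A\xi,\xi\rangle^{(p-2)/2}$ is controlled by the \emph{lower} ellipticity bound, which you implicitly use) place the equation in the single-weight quasilinear framework of Fabes--Kenig--Serapioni ($p=2$) and Heinonen--Kilpel\"ainen--Martio, and the Caccioppoli/weighted Sobolev--Poincar\'e/Moser--De~Giorgi scheme you sketch does yield interior H\"older continuity. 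The paper, however, takes a different route: it reads the two-sided bound as a degenerate \emph{pair} $(\lambda^{p/2}w,\Lambda^{p/2}w)$ and verifies that this pair is $p$-admissible in the sense of Definition~\ref{defn:admissible} --- the balance condition there is checked using the ball-volume estimates of Lemma~\ref{lemma:hormander-nocenter} together with the openness property $w\in A_{p-\epsilon}$ --- after which the two-weight mean value and Harnack inequalities of Chanillo--Wheeden and Ferrari (Theorems~\ref{thm:strong-mvt} and~\ref{thm:harnack}) apply with $\mu_p$ uniformly bounded, and H\"older continuity follows by the standard oscillation argument. What the paper's packaging buys is that the identical verification works for H\"ormander vector fields, so Theorem~\ref{thm:easy-thm} holds in general Carnot--Carath\'eodory spaces, with Theorem~A as the special case $X=\grad$; your route is shorter and more self-contained for the Euclidean statement but does not directly cover that generality, since it leans on the Euclidean HKM theory. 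One small point to make explicit: you work in $W^{1,p}(\Omega,w\,dx)$ with $C_c^\infty$ test functions, whereas the paper's solutions live in $S^p(\Omega)$, the closure of Lipschitz functions; for $w\in A_p$ the gradient is uniquely determined and the two notions of weak solution agree, but this identification should be stated rather than assumed.
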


\begin{nonum}
Given $\Omega$ and $p$, $1<p<\infty$, let $A$ be an $n\times n $ real-symmetric matrix of measurable
functions defined in $\Omega$ such that for every $\xi\in \R^n$ and a.e. $x\in \Omega$, 
\begin{equation} \label{eqn:ellipticityB}
k(x)^{\frac{-2}{p'}}|\xi|^2\leq \langle A(x)\xi,\xi\rangle\leq
k(x)^{\frac2p}|\xi|^2,
\end{equation}
where $k\in A_{p'}\cap RH_n$.  Then every weak solution $u$  of \eqref{eqn:diffA}
is continuous almost everywhere in $\Omega$.
\end{nonum}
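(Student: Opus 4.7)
The plan is to establish a weighted Caccioppoli estimate adapted to the asymmetric ellipticity condition \eqref{eqn:ellipticityB}, to upgrade it via H\"older's inequality together with the $RH_n$ hypothesis and a Gehring-type self-improvement, and to conclude continuity at a set of points of full Lebesgue measure.

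Since $k\in A_{p'}$, the weight $v:=k^{1-p}$ lies in $A_p$. The lower ellipticity bound in \eqref{eqn:ellipticityB} then shows that any weak solution $u$ of \eqref{eqn:diffA} belongs to $W^{1,p}_{\mathrm{loc}}(\Omega,v)$. Testing \eqref{eqn:diffA} against $(u-c)\eta^p$ for a cutoff $\eta$ and a constant $c$, using Cauchy--Schwarz in the form induced by $A$, and invoking both sides of \eqref{eqn:ellipticityB} yields the \emph{asymmetric} Caccioppoli inequality
$$
\int_B v\,|\grad u|^p \eta^p \;\leq\; C\int_B k\,|u-c|^p |\grad \eta|^p.
$$
In contrast to the setting of Theorem~A, the weights on the two sides do not coincide, and this mismatch is the main technical obstacle.

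To bridge the two weights, I would split the right-hand side by H\"older's inequality, using $k\in RH_n$ to control the factor $k$ in an $L^n$ average and the weighted Sobolev--Poincar\'e inequality for the $A_p$ weight $v$ to control the remaining $L^{pn/(n-1)}$ norm of $u-c$. The Reverse H\"older exponent $\tau=n$ is precisely what is needed for these two steps to be mutually compatible, which accounts for the role of the dimension in the hypothesis. This produces a weighted reverse H\"older inequality for the energy density $v|\grad u|^p$, and Gehring's lemma then delivers the self-improvement $v|\grad u|^p\in L^{1+\delta}_{\mathrm{loc}}(\Omega)$ for some $\delta>0$.

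The final step is to deduce from this higher integrability that the oscillation $\osc_{B_r(x_0)}(u)$ tends to $0$ as $r\to 0$ at every $x_0$ that is a common Lebesgue point of $k$ and of $v|\grad u|^p$; these form a set of full Lebesgue measure in $\Omega$. Consequently $u$ is continuous almost everywhere. The principal obstacle throughout is the weight mismatch in the Caccioppoli inequality, and its resolution requires simultaneously the $A_{p'}$ property of $k$ (so that $v=k^{1-p}\in A_p$ makes the weighted Sobolev--Poincar\'e inequality available on the natural side) and the $RH_n$ property of $k$ (so that H\"older's inequality absorbs the upper weight at the Sobolev exponent). Weakening the Reverse H\"older exponent below $n$ breaks the iteration, consistently with the sharpness examples promised in the introduction.
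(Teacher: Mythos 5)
There is a genuine gap, and it is located at the final step. Gehring-type higher integrability of the energy density, $k^{1-p}|\grad u|^p\in L^{1+\delta}_{\mathrm{loc}}$, does not imply any pointwise oscillation decay for $u$: a small improvement in the integrability exponent leaves you far below the Morrey threshold ($|\grad u|$ locally in $L^{n+\epsilon}$, or a Campanato-type decay of the energy on balls) that would be needed to conclude $\osc_{B_r(x_0)}u\to 0$, and being a Lebesgue point of $k$ and of the energy density gives no such decay. Indeed the sharp examples (Section~\ref{sharp}, and the mapping $u=\frac{x}{|x|}e^{|x|^{1/q'}}\sin(y/q')$ in the introduction) show that the discontinuity set is governed by $\{Mk=\infty\}$, not by the Lebesgue-point sets you invoke; any correct proof has to produce a quantitative oscillation estimate whose constant degenerates exactly with $Mk(x)$, and your proposal contains no mechanism for that. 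There is also a problem one step earlier: after applying H\"older with $k\in RH_n$ you need to bound an \emph{unweighted} $L^{pn/(n-1)}$ norm of $u-c$ by the $v$-weighted energy with $v=k^{1-p}$; the single-weight Sobolev--Poincar\'e inequality for the $A_p$ weight $v$ controls only $L^q(v)$ norms, so what you are really invoking is a two-weight Sobolev inequality for the pair $(k^{1-p},k)$, which is not free -- it requires the Chanillo--Wheeden balance condition.

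For comparison, the paper's proof runs as follows: it verifies that $(k^{-p/p'},k)$ is a $p$-admissible pair, i.e.\ $k^{1-p}\in A_p$ (from $k\in A_{p'}$) together with the balance condition of Definition~\ref{defn:admissible}, which is exactly where $RH_n$ (more generally $RH_\tau$ with $\tau$ tied to the homogeneous dimension) enters (Theorem~\ref{thm:hormander:balance}). Admissibility gives the two-weight Poincar\'e--Sobolev inequality of Chanillo--Wheeden/Franchi--Lu--Wheeden, which in turn yields the two-weight Harnack inequality of Ferrari with constant $\exp(C\mu_p(B))$, $\mu_p(B)=(k(B)/k^{1-p}(B))^{1/p}$. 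The decisive pointwise observation is that H\"older's inequality gives $\mu_p(B)\leq \dashint_B k\,dy\leq Mk(x)$ for $x\in B$, so the oscillation iteration $\osc_u(x,r/2)\leq\gamma(x)\osc_u(x,r)$ with $\gamma(x)<1$ closes at every point of $S(k)=\{Mk<\infty\}$, a set of full measure. If you want to salvage your approach, you would need to replace the Gehring step by this Harnack/oscillation machinery (or an equivalent De Giorgi-type iteration in the two-weight setting); higher integrability alone cannot reach the conclusion.
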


Continuity almost everywhere is the best result possible with these
hypotheses.  To illustrate this, consider the following example
adapted from \cite{MR847996}.  Let $p=2$, let $\Omega$ be the unit ball in $\R^2$
and fix
$q>2$.  Define $k(x,y)=|x|^{-1/q}$.  Then $k\in A_1\cap RH_2$.  
Define the matrix $A$ by

\[ A(x) = \left(\begin{matrix} k(x)^{-1} & 0 \\ 0 &
    k(x) \end{matrix}\right);  \]
then $A$ satisfies the ellipticity condition \eqref{eqn:ellip:p=2}.   The function
\[ u(x,y) = \frac{x}{|x|}\exp(|x|^{1/q'}) \sin(y/q') \]
is a (formal) solution of \eqref{eqn:diffA}  but it is discontinuous
on the $y$-axis.   As we will see below in Section~\ref{proofofmain}, our results actually give us
precise control on the set of discontinuities of $u$.  

To put these theorems in context, we describe some known results.  It
is well-known that solutions to the $p$-Laplacian
\eqref{eqn:p-laplacian} are in $C_{loc}^{1,\alpha}$ for some $\alpha$,
$0<\alpha<1$ (see~\cite{MR672713,MR721568,MR0244628}).  Fabes, Kenig
and Serapioni~\cite{MR643158} showed that given
\eqref{eqn:wtd-elliptic}, the solutions of ~\eqref{eqn:degen-elliptic}
are H\"older continuous.  Modica~\cite{MR839035} proved the Harnack
inequality and H\"older continuity for quasi-minimizers of certain
variational integrals controlled by Muckenhoupt weights and obtained a
version of Theorem~A.

In the more general case when the matrix $A$ satisfies the two-weight
ellipticity condition 
\[  w(x)|\xi|^2 \leq \langle A(x)\xi,\xi \rangle \leq v(x)|\xi|^2, \]
where $w(x)\leq v(x)$ are non-negative measurable functions, then in
general solutions of ~\eqref{eqn:degen-elliptic} will not be regular.  However, Chanillo and
Wheeden~\cite{MR847996} proved that if $v,\,w$ are
doubling, $w\in A_2$, and there exists $q>2$ such that given two balls
$B(x_1,r_1)\subset B(x_2,r_2)$,
\begin{equation} \label{eqn:cw-balance}
\frac{r_1}{r_2}\left(\frac{v(B(x_1,r_1))}{v(B(x_2,r_2))}\right)^{1/q}
\leq C \left(\frac{w(B(x_1,r_1))}{w(B(x_2,r_2))}\right)^{1/2},
\end{equation}
then solutions of ~\eqref{eqn:degen-elliptic} satisfy a
Harnack inequality.  As a consequence, they showed that if we define
\[ \mu(x,r) = \left(\frac{v(B(x,r))}{w(B(x,r))}\right)^{1/2}, \]
then the solution is continuous provided that 
\[ \mu(x,r) = o(\log\log(r^{-1})) \]
as $r\rightarrow 0$.  Ferrari~\cite{MR2228656} extended these results
to non-negative solutions of  the degenerate
$p$-Laplacian \eqref{eqn:diffA} when the matrix $A$ satisfies
\[  w(x)^{2/p}|\xi|^2 \leq \langle A(x)\xi,\xi \rangle \leq v(x)^{2/p}|\xi|^2 \]
assuming $v,\,w$ are doubling, $w\in A_p$, $q>p$ and
\eqref{eqn:cw-balance} holds with the exponent $1/2$ replaced by
$1/p$.  

Using the results of Chanillo and Wheeden, Theorem B in the case $p=2$
was recently proved by the first author, di Gironimo and
Sbordone~\cite{cruz-diGironimo-sbordone-P}.      Degenerate elliptic
equations of this type arise in the study of maps of finite
distortion in the plane:  see, for
example,~\cite{MR2472875}.   
%
%
%
%

We will actually prove our results in the more general setting of
$C^\infty$ vector fields.  This introduces a different kind of
degeneracy:  while  the matrix $A$ can be thought of as making the
measure degenerate, the vector fields introduces degeneracies into the geometry.  Let $X_1,\ldots,X_m$ be a family of
$C^\infty$ vector fields on $\R^n$ that satisfy H\"ormander's condition.    Define the operators
$Xu=(X_1u,\ldots,X_mu)$ and 
\[ \Div_X \mathbf{v}= -\sum_{j=1}^m X_i^*v_i, \]
where $\mathbf{v}=(v_1,\ldots,v_m)$.  Then the operators above have
natural generalizations to Carnot-Carath\'eodory spaces, and in
particular to Carnot groups. Operators in this generality have been considered by a number of authors. Capogna, Danielli and Garofalo~\cite{MR1239930} showed that
solutions of 
\[ \Div_X(|Xu|^{p-2}Xu) = 0\]
are H\"older continuous.  Lu~\cite{MR1202416} proved the regularity of
solutions of
\[ \Div_X(A Xu)= 0 \]
where $A$ is an $m\times m$ real-symmetric matrix that satisfies
\eqref{eqn:wtd-elliptic} for $w\in A_2$.  Franchi, Lu and
Wheeden~\cite{MR1343563,MR1354890} extended the results of Chanillo
and Wheeden~\cite{MR847996} to this more general setting.
Ferrari~\cite{MR2228656} in turn extended his results for the
$p$-Laplacian discussed above to the solutions of the equation
\begin{equation}\label{finitedisteqn}
\Div_X(\langle AXu,Xu\rangle^{\frac{p-2}{2}} AXu) = 0. 
\end{equation}

The main results of this paper extend Theorems A and B to the setting
of Carnot-Carath\'eodory spaces.    Theorem~\ref{thm:easy-thm}
generalizes Theorem A, and Theorems~\ref{thm:main-CCS}
and~\ref{thm:main-carnot} generalize Theorem B.  Since the statement
of all of these results requires some additional definitions, we defer
their precise statement until below.   In the end, our main results follow from standard regularity arguments.  The bulk of
our work is to construct the machinery necessary to show that these standard
arguments are actually applicable in this very general setting.  In doing so we draw
upon earlier work by the first author, di Gironimo and Sbordone \cite{cruz-diGironimo-sbordone-P} and Ferrari \cite{MR2228656}.

Our results are applicable to mappings of finite distortion.  A
function $f:\Omega\ra \R^n$ with $f\in W_{loc}^{1,1}(\Omega,\R^n)$ and
locally integrable Jacobian is a mapping of finite distortion if there
is a function $K\geq 1$ finite a.e. such that
\[ |Df(x)|^n\leq K(x)J_f(x), \]
where $|Df(x)|=\sup_{|h|=1} |Df(x)h|$ is the operator norm and
$J_f(x)=\det Df(x).$ These mappings have been extensively studied: see
Astala, Iwaniec, and Martin \cite[Chapter 20]{MR2472875} for mappings
in the plane and Iwaniec and Martin \cite[Chapters 6 and 7]{MR1859913}
for results in higher dimensions.   The regularity of such mappings
has also been extensively addressed.  Mappings of finite distortion satisfy
an equation of the form~\eqref{eqn:diffA} with the degeneracy
controlled by the distortion function.  As an application of our
results we show that with assumptions on the distortion, mappings of
finite distortion are continuous almost everywhere.  This
complements work of Manfredi~\cite{MR1294334}.  As further motivation for the degenerate equations  considered in this article we note that the theory of mappings of finite distortion has been extended to Carnot groups and such mappings satisfy equations of the form \eqref{finitedisteqn}: see for instance Vodop'yanov \cite[Section 4]{MR1721674}.

The remainder of this paper is organized as follows.  In
Section~\ref{tools} we gather some preliminary results.  In \S
\ref{subsec:hormander} we recall the definition and basic properties
of H\"ormander vector fields, including Carnot groups as a particular
case; in \S \ref{subsec:weights} we discuss the theory of Muckenhoupt
weights in the context of spaces of homogeneous type.  In
Section~\ref{section:PDE} we give the machinery we need to work with
the equations we are interested in.  In \S \ref{subsec:soln}- \ref{sec:weaksolutions} we define
precisely the solution to $\Lap_p u=0$.  We follow the definition
given in~\cite{MR2228656}; for the convenience of the reader we
provide full details. In \S\ref{subsec:harnack} we give the Harnack
inequalities we use in our main proofs.  In Section~\ref{proofofmain}
we prove Theorems~\ref{thm:easy-thm}, \ref{thm:main-CCS}
and~\ref{thm:main-carnot}.  In Section~\ref{sharp} we show that
Theorem B is sharp: we construct an operator $\Lap_p$ and a solution
$u$ of $\Lap_p u=0$ that is discontinuous on a set of measure zero.
Finally, in Section~\ref{section:finite-distortion} we give our
applications to the theory of mappings of finite distortion.

Throughout this paper, $C$, $c$, etc. will denote a constant whose values may change from line to line.  Generally, it will depend on the value of $p$, the dimension of the vector field and the characteristics of the weight, but not on the specific function.

{\bf Acknowledgement.}  The authors completed this collaboration while at the  American Institute of Mathematics. They would like to thank AIM for its hospitality and support. They also  thank the anonymous 
referees for  their  detailed comments and suggestions.

\section{Preliminary Results}
\label{tools}

\subsection{H\"ormander vector fields}
\label{subsec:hormander}

Let 
$X=(X_1,\ldots, X_m)$ be a family of  infinitely differentiable vector fields defined in $\R^n$ and with values in $\R^m.$
We identify each $X_j$ with the first order differential operator acting on Lipschitz functions  given  by
\[
X_j f (x)=X_j(x)\cdot \nabla f(x),\quad\,\, j=1,\cdots,m.  
\]
We define $Xf=(X_1f,X_2f,\cdots,X_mf)$ and set
\[
|Xf(x)|=\left(\sum_{j=1}^m|X_jf(x)|^2\right)^{\frac{1}{2}}.
\]

Given an open, connected set $\Omega$, $X$ is said to satisfy H\"ormander's
condition in $\Omega$ if there exists a neighborhood $\Omega_0$ of
$\overline{\Omega}$ and $l\in\N$ such that the family of commutators
of the vector fields in $X$ up to length $l$ span $\R^n$ at every
point of $\Omega_0$.   Hereafter, we will assume that $X$ satisfies
H\"ormander's condition on every bounded, connected subset of $\R^n$.  

Let $C_X$ be the family of absolutely
continuous curves $\zeta:[a,b]\to\R^n,$ $a\le b,$ such that there
exist measurable functions $c_j(t),$ $a\le t\le b,$ $j=1,\cdots, m,$
satisfying $\sum_{j=1}^{m} c_j(t)^2\le 1$ and
$\zeta'(t)=\sum_{j=1}^{m}c_j(t) X_j(\zeta(t))$ for almost every $t\in
[a,b].$ Given $x,\,y\in\Omega$, define
\[\rho(x,y)=\inf\{T>0: \exists \zeta \in C_X \text{ s.t. } \zeta(0)=x, \zeta(T)=y\}.\]
The function $\rho$ is a metric on $\Omega$ called the
Carnot-Carath\'eodory metric associated to $X$, and the pair $(\Omega,\rho)$ is said to be a Carnot-Carath\'eodory space.   We refer the reader to~\cite{MR793239} for more details on
Carnot-Carath\'eodory spaces.


By a ball $B$ of radius $r$ and center $x$ we mean the metric ball $B
= B(x,r) = \{ y \in \R^n : \rho(x,y) < r \}$.  Given a ball $B$, let
$r(B)$ denote its radius. Nagel, Stein and Wainger proved in
\cite{MR793239} that for every compact set $K\subset\Omega$ there
exist positive constants $R_0,$ $c,$ $c_1$ and $c_2$ depending on $K$
such that
\begin{equation}\label{ineq:doubling}
|B(x,2r)|\le c\,|B(x,r)|, \qquad x\in K, \,0<r< R_0
\end{equation}
and
\begin{equation}\label{ineq:rhotopo}
c_1|x-y|\le \rho(x,y)\le c_2|x-y|^{\frac{1}{l}}, \qquad x,\,y\in K.
\end{equation}
Note that inequality \eqref{ineq:rhotopo} implies that the topology
induced by $\rho$ on $\Omega$ is the same as the Euclidean topology.
Using \eqref{ineq:doubling}, Bramanti and Brandolini proved in
\cite[p. 533]{MR2174915} that the triple $(B,\rho,dx)$ is a space of
homogeneous type for all metric balls $B\subset \Omega$; moreover,
structural constants are uniform for all metric balls with radius
bounded by a fixed constant and with center in a fixed compact subset
of $\Omega$.  In addition, they gave regularity properties for
$\partial \gensub,$ $\gensub\subset \Omega$, that imply that $(\gensub,
\rho, dx)$ is a space of homogeneous type. One such condition,
possessed by every metric ball \cite[p. 533]{MR2174915}, is
\begin{equation}\label{eqn:regularitycondition}
|B(x, r)\cap \gensub|\ge C|B(x,r)|, \qquad x\in \gensub, \, 0<r<\text{diam}_\rho(\gensub).
\end{equation}

Of importance below is  the homogeneous dimension of a
Carnot-Carath\'eodory space.  
In the particular case when  $X$ is the generator of
a homogeneous Carnot group on $\R^n$, there exists a constant $Q\geq
n$ such that $|B(x,r)|=|B(0,1)|\,r^Q$;  we will define $Q$ to be the
homogeneous dimension of $(\R^n,\rho)$.  As a consequence, we have
that 
$(\R^n,\rho, dx)$ is a space of homogeneous type.  For a detailed description of 
the theory of Carnot groups see, for instance, the book by Bonfiglioli, Lanconelli 
and Uguzzoni~\cite{MR2363343}. 

More generally, if $\Omega$ is a bounded connected subset of $\R^n$,
then it was shown in~\cite[Proposition 2.1]{MR1239930} (see also~\cite{MR1343563}) that
there exist positive constants $Q$, $R$ and $c$, all depending on
$\Omega$,  such that for all $x\in \Omega$
and $0<s<r<R$
\begin{equation} \label{eqn:lower-bound}
 c\left(\frac{s}{r}\right)^Q \leq \frac{|B(x,s)|}{|B(x,r)|}.
\end{equation}
Hereafter, for such $\Omega$, we define this constant $Q$ to be the
homogeneous dimension.  Unless we are in the special case of a Carnot
group, this value depends on $\Omega$: it is clear from~\cite{MR1239930} that if
$\Omega'\subset \Omega$, then the homogeneous dimension of $\Omega'$
will be smaller than that of $\Omega$.   Moreover, there is a certain ambiguity in this
definition, since given one value of $Q$ we can take any larger
value.   As we will see in Section~\ref{proofofmain} the smallest
possible value should be taken.

The following upper and lower estimates play a significant role in our proofs of
Theorems~\ref{thm:main-CCS} and~\ref{thm:main-carnot}.

\begin{lemma} \label{lemma:hormander-nocenter}
Given any bounded set $\Omega \subset \R^n$ (or $\Omega=\R^n$ if $X$
is the generator of a Carnot group) let $Q$ be the homogeneous
dimension.    Then  there exist positive
constants $d_\Omega,\,D_\Omega,\,R_\Omega$ such that given $x_1,\,x_2\in \Omega$ and
$0<r_1<r_2<R_\Omega$, if $B(x_1,r_1)\subset B(x_2,r_2)$, then
\begin{equation} \label{eqn:hormander-nocenter1}
d_\Omega \left(\frac{r_1}{r_2}\right)^Q \leq \frac{|B(x_1,r_1)|}{|B(x_2,r_2)|} \leq
D_\Omega \left(\frac{r_1}{r_2}\right)^n.
\end{equation}
\end{lemma}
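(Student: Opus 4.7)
The plan is to reduce both inequalities to the case of concentric balls via an intermediate ball of radius $r_2$ centered at $x_1$. The hypothesis $B(x_1,r_1)\subset B(x_2,r_2)$ forces $\rho(x_1,x_2)\le r_2$, so by the triangle inequality
\[
B(x_1,r_2)\subset B(x_2,2r_2)\subset B(x_1,3r_2).
\]
Choosing $R_K$ small enough (say $R_K\le R_0/4$) so that the doubling inequality \eqref{ineq:doubling} applies uniformly on $K$ to all radii up to $3r_2$, these inclusions together with finitely many applications of doubling yield the comparability $|B(x_1,r_2)|\approx |B(x_2,r_2)|$, with constants depending only on $K$. This reduction transfers the problem to estimating $|B(x_1,r_1)|/|B(x_1,r_2)|$.

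For the concentric case I would invoke the Nagel--Stein--Wainger polynomial representation
\[
|B(x,r)|\approx\sum_I |\det Y_I(x)|\,r^{d(I)},\qquad n\le d(I)\le Q,
\]
valid uniformly for $x\in K$ and $r<R_K$ (after possibly shrinking $R_K$). Since $r_1/r_2\le 1$ and $n\le d(I)\le Q$, each summand satisfies the term-by-term inequality
\[
\left(\tfrac{r_1}{r_2}\right)^{Q}r_2^{d(I)}\le r_1^{d(I)}\le \left(\tfrac{r_1}{r_2}\right)^{n}r_2^{d(I)},
\]
and summing with weights $|\det Y_I(x_1)|$ gives the two-sided same-center bound
\[
d\left(\tfrac{r_1}{r_2}\right)^{Q}|B(x_1,r_2)|\le |B(x_1,r_1)|\le D\left(\tfrac{r_1}{r_2}\right)^{n}|B(x_1,r_2)|.
\]
Combining this with the comparability of the previous paragraph yields both \eqref{eqn:hormander-nocenter1}.

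The main obstacle is the upper bound with the Euclidean exponent $n$. Iterated doubling alone only delivers the weaker exponent $Q$, and the Euclidean comparison \eqref{ineq:rhotopo} gives merely $|B(x,r)|\ge c\,r^{nl}$ from below, which is far worse than $r^n$. The sharp exponent $n$ depends crucially on the structural fact, built into the NSW formula, that the smallest degree appearing in the volume polynomial equals the topological dimension $n$. The remaining work is book-keeping: using compactness of $K$ to make the NSW constants and the doubling exponent uniform, and absorbing the finitely many doublings from the centers-swap step into $D_K$ and $d_K$.
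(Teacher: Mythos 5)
Your argument is correct, and its skeleton is the same as the paper's: reduce the off-center comparison to a concentric one by exploiting $\rho(x_1,x_2)<r_2$, then apply a two-sided concentric volume estimate with exponents $Q$ below and $n$ above. The differences are in the details of each step. For the reduction, the paper works with the chain $B(x_2,r_2)\subset B(x_1,2r_2)\subset B(x_2,4r_2)$ and applies the concentric inequality \eqref{eqn:hormander-doubling1} twice (once at $x_1$, once at $x_2$) with $R_K=r_K/4$, whereas you first establish $|B(x_1,r_2)|\approx|B(x_2,r_2)|$ by finitely many applications of the doubling estimate \eqref{ineq:doubling}; both are legitimate and cost only a fixed constant. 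For the concentric estimate itself, the paper simply cites \cite[p.~584]{MR1343563}, while you re-derive it from the Nagel--Stein--Wainger volume polynomial $|B(x,r)|\approx\sum_I|\det Y_I(x)|\,r^{d(I)}$ of \cite{MR793239}, using the term-by-term bound $(r_1/r_2)^Q r_2^{d(I)}\le r_1^{d(I)}\le (r_1/r_2)^n r_2^{d(I)}$. That derivation is exactly the content of the cited result, so your proof is more self-contained at the price of invoking the deeper NSW structure theorem directly; it also correctly isolates why the upper exponent is the topological dimension $n$ rather than anything doubling alone could give. The only point worth making explicit is the inequality $d(I)\le Q$ for every multi-index with $\det Y_I\not\equiv 0$ near $K$: this is precisely the definition of the (local) homogeneous dimension $Q$ used here, so your lower bound is safe, but it should be stated rather than left implicit, since the sum in the NSW formula a priori runs over all $n$-tuples of commutators up to length $l$.
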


\begin{proof}
Fix $\Omega$; when the balls are concentric, there exist constants $c_\Omega, C_\Omega$ and $r_\Omega$ such that
\begin{equation} \label{eqn:hormander-doubling1}
 c_\Omega \left(\frac{s}{r}\right)^Q \leq \frac{|B(x,s)|}{|B(x,r)|} \leq
C_\Omega \left(\frac{s}{r}\right)^n
\end{equation}
whenever $x\in \Omega$ and $0<s<r<r_\Omega$.  The first inequality is
just~\eqref{eqn:lower-bound} above; the proof of the second is in~\cite[p. 584]{MR1343563}.  Let $R_\Omega=r_\Omega/4$.   Fix balls
$B(x_1,r_1)$ and $B(x_2,r_2)$ as in the hypotheses.  Then
$B(x_2,r_2)\subset B(x_1,2r_2)$, and so by the first inequality
in~\eqref{eqn:hormander-doubling1},
\[ \frac{|B(x_1,r_1)|}{|B(x_2,r_2)|}  \geq
\frac{|B(x_1,r_1)|}{|B(x_1,2r_2)|} \geq
c_\Omega\left(\frac{r_1}{2r_2}\right)^Q, \]
so we get the first inequality in \eqref{eqn:hormander-nocenter1} with
$d_\Omega=2^{-Q}c_\Omega$.

Similarly, we have that $B(x_2,r_2)\subset B(x_1,2r_2)\subset
B(x_2,4r_2)$, and so again by the first inequality in
\eqref{eqn:hormander-doubling1},
\[ 4^Qc_\Omega^{-1} |B(x_2,r_2)|\geq |B(x_2,4r_2)| \geq |B(x_1,2r_2)|.  \]
Hence by the second  inequality in
\eqref{eqn:hormander-doubling1},
\[ \frac{|B(x_1,r_1)|}{|B(x_2,r_2)|}  \leq
 4^Qc_\Omega^{-1} \frac{|B(x_1,r_1)|}{|B(x_1,2r_2)|}  \leq
 4^Qc_\Omega^{-1}C_\Omega\left(\frac{r_1}{2r_2}\right)^n. \]
Therefore, we get the second inequality in
\eqref{eqn:hormander-nocenter1} with
$D_\Omega=2^{2Q-n}c_\Omega^{-1}C_\Omega$.
\end{proof}

\subsection{Muckenhoupt weights in spaces of homogenous type}
\label{subsec:weights}

The theory of
Muckenhoupt weights in spaces of homogenous type is well-developed and
parallels the standard theory.  

By a weight we will mean a positive function in $L^1_{loc}(\R^n)$.  We
will say that a weight $w$ is doubling in $\gensub\subset \R^n$ if
there exists a constant $C_\gensub$ such that
\[ w(B(x,2r)\cap\gensub) \leq C_\gensub\,w(B(x,r)\cap\gensub),\quad x\in \gensub,\,r>0. \]
We will say that $w$ is locally doubling in $U$ if  for each compact set
$K\subset \gensub$ and $R>0$ there exists  $C_{K,R}$ such
that 
\[ w(B(x,2r)\cap\gensub) \leq C_{K,R}\,w(B(x,r)\cap\gensub),\quad x\in
K,\,0<r<R. \]

Given $p$,  $1<p<\infty$, and a weight $w$, we say $w\in A_p(\gensub,\rho,dx)$ if 
\[ [w]_{A_p} = \sup_{\substack{r>0\\x\in \gensub}}
\left(\dashint_{B(x,r)\cap\gensub} w(x)\,dx\right) \left(\dashint_{B(x,r)\cap\gensub}
  w(x)^{1-p'}\,dx\right)^{p-1} < \infty. \]
When $p=1$, we say $w\in A_1(\gensub,\rho,dx)$ if 
\[ [w]_{A_1} = \esssup_{x\in \gensub} \frac{Mw(x)}{w(x)} < \infty, \]
where $M$ is the maximal operator defined using metric balls:  given a
measurable function $f$, for all $x\in \gensub$, 
\[ Mf(x) = \sup_{r>0} \dashint_{B(x,r)\cap\gensub} |f(y)|\,dy. \]
Finally, we set $A_\infty(\gensub,\rho,dx):=\cup_{p=1}^\infty A_p(\gensub,\rho,dx).$

We say that a weight $w$ satisfies a reverse H\"older inequality in $\gensub$ with
exponent $t>1$, denoted by $w\in RH_t(\gensub,\rho,dx)$, if
\[ [w]_{RH_t} = \sup_{\substack{r>0\\x\in \gensub}}
\frac{\left(\dashint_{B(x,r)\cap\gensub} w(x)^t\,dx\right)^{1/t}}{\dashint_{B(x,r)\cap\gensub}
  w(x)\,dx} < \infty.  \]

\begin{remark} \label{rem:restriction}
If $\gensub$ is a bounded set that satisfies
\eqref{eqn:regularitycondition} then $|B|\sim  |B\cap \gensub|$ for any
metric ball with center in $\gensub$ and $r(B)\le \text{diam}_\rho(\gensub).$ Therefore, if $w\in
A_p(\R^n,\rho,dx)$, then the restriction of $w$ to $\gensub$ belongs to
$A_p(\gensub,\rho,dx).$ In particular, this is true when $\gensub$ is a metric
ball.
\end{remark}

Muckenhoupt weights have the following properties.  

\begin{lemma} \label{prop-Apfacts} 
  Let $\gensub\subset \R^n$ and suppose that $(\gensub,\rho,dx)$ is a space of homogeneous type.  If $w\in A_p(\gensub,\rho,dx)$ for
  some $p>1$, then the following are true:

\begin{enumerate}[(a)]
\item \label{prop-Apfacts-a} $w^{1-p'} \in A_{p'}(\gensub,\rho,dx)$;

\item\label{prop-Apfacts-b} there exists $t>1$ such that $w\in
  RH_t(\gensub,\rho,dx)$;

\item\label{prop-Apfacts-c} there exists $\epsilon>0$ such that $w\in A_{p-\epsilon}(\gensub,\rho,dx)$.
\end{enumerate}

\end{lemma}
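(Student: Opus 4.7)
The plan is to argue exactly as in the classical Euclidean theory, since the proofs of all three facts depend only on the space of homogeneous type structure of $(\gensub,\rho,dx)$, which is the content of the Bramanti--Brandolini results quoted in \S\ref{subsec:hormander}. Standard references carrying out this transfer in general spaces of homogeneous type (Str\"omberg--Torchinsky, Aimar--Mac\'{\i}as) supply essentially all the ingredients, so my aim is to give the algebraic skeleton and flag the one place where geometry really enters.

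For part (a), I would argue directly from the definition. Setting $\sigma=w^{1-p'}$, the identity $(1-p)(1-p')=1$ yields $\sigma^{1-p}=w$, and swapping the roles of the two factors inside the supremum that defines $[\,\cdot\,]_{A_p}$ gives
\begin{equation*}
[\sigma]_{A_{p'}}=[w]_{A_p}^{p'-1},
\end{equation*}
so (a) is immediate with no appeal to the metric structure.

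Part (b) is the main point. Fix a metric ball $B_0\subset\gensub$ and normalize so that $\dashint_{B_0} w\,dx=1$. For each $\lambda>1$ I would perform a Calder\'on--Zygmund stopping-time decomposition on $B_0$ with respect to $w\,dx$, producing a disjoint family of sub-balls whose parent averages are $\le\lambda$ and on which $w$ is controlled above by $c\lambda$. The $A_p$ condition on the selected balls then bounds $w(\{w>\lambda\})$ in terms of $|\{w>\beta\lambda\}|$ for some $\beta\in(0,1)$, yielding a good-$\lambda$ inequality
\begin{equation*}
w(\{x\in B_0:w(x)>\lambda\})\le C\lambda\,|\{x\in B_0:w(x)>\beta\lambda\}|.
\end{equation*}
A standard distribution-function bootstrap then upgrades this to
\begin{equation*}
\left(\dashint_{B_0} w^{1+\epsilon}\,dx\right)^{1/(1+\epsilon)} \le C\,\dashint_{B_0} w\,dx
\end{equation*}
for some $\epsilon=\epsilon([w]_{A_p})>0$, i.e.\ $w\in RH_{1+\epsilon}(\gensub,\rho,dx)$. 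The step that actually uses the setting is the Calder\'on--Zygmund decomposition of a metric ball: it requires the local doubling of Lebesgue measure on $\rho$-balls and a Vitali-type covering lemma, both supplied by the results in \S\ref{subsec:hormander}. Verifying uniformity of the reverse H\"older exponent and constant across balls contained in $\gensub$ is the one nonalgebraic piece of the whole lemma, and is the main obstacle.

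For (c), I would combine (a) and (b). Applying (b) to $\sigma=w^{1-p'}\in A_{p'}$ produces $s>1$ with
\begin{equation*}
\left(\dashint_B w^{(1-p')s}\,dx\right)^{1/s}\le C\dashint_B w^{1-p'}\,dx.
\end{equation*}
A short calculation shows that if one picks $\epsilon=(p-1)(s-1)/s\in(0,p-1)$, then $(1-p')s=1-(p-\epsilon)'$; substituting into the $A_{p-\epsilon}$ supremum and invoking the reverse H\"older bound above yields $[w]_{A_{p-\epsilon}}\le C^{(p-1)/s}[w]_{A_p}$, which is exactly (c). Thus both (a) and (c) are purely algebraic consequences of (b), and (b) is the single genuine step.
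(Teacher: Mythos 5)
Your proposal is correct and follows essentially the same route as the paper: the paper treats (a) as immediate from the definition (with the same constant identity $[w^{1-p'}]_{A_{p'}}=[w]_{A_p}^{1/(p-1)}$ recorded in Remark~\ref{remark:constants}), deduces (c) from (a) and (b) exactly as you do, and simply defers (b) to the standard theory of Muckenhoupt weights on spaces of homogeneous type, which is the Calder\'on--Zygmund/good-$\lambda$ argument you sketch. Your write-up just supplies more detail than the paper chooses to give; the only (harmless) slip is the constant $C^{(p-1)/s}$ in part (c), which should be $C^{p-1}$.
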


As in the Euclidean case, part \eqref{prop-Apfacts-a} is a direct
consequence of the definition of $A_p$ weight, part (b) is proved in~\cite{MR1011673}, and part
\eqref{prop-Apfacts-c} easily follows from \eqref{prop-Apfacts-a} and
\eqref{prop-Apfacts-b}.

\begin{lemma} \label{lemma-improve} Let $\gensub\subset \R^n.$ 
\begin{enumerate}[(a)]
\item\label{lemma-improve-a} If $w\in RH_t(\gensub,\rho,dx)$ for some $t>1$, then
for any $x\in \gensub,$ $r> 0$, and any measurable set $E\subset B(x,r)\cap \gensub,$
\[ \frac{w(E)}{w(B(x,r)\cap \gensub)} \leq [w]_{RH_t(\gensub,\rho,dx)}
\left(\frac{|E|}{|B(x,r)\cap \gensub|}\right)^{1/t'}. \]
\item\label{lemma-improve-b}  If $w\in A_p(\gensub,\rho,dx)$ for
  some $p>1$, then
for any $x\in \gensub,$ $r> 0$, and any measurable set $E\subset B(x,r)\cap \gensub,$
\[ \frac{|E|}{|B(x,r)\cap \gensub|}\leq  \left([w]_{A_p(\gensub,\rho,dx)}
\frac{w(E)}{w(B(x,r)\cap \gensub)} \right)^{1/p}. \]
In particular, if  $(\gensub,\rho,dx)$ is a space of homogeneous type, then $w$ is doubling:
   for all $x\in \gensub$ and $r>0$,
\[ w(B(x,2r)\cap \gensub) \leq D^p\,[w]_{A_p(\gensub,\rho,dx)}w(B(x,r)\cap \gensub),\]
where $D$ is a doubling constant for $(\gensub,\rho,dx).$ 
\end{enumerate}
\end{lemma}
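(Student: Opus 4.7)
The plan is to derive both inequalities directly from H\"older's inequality applied to the defining estimates for $RH_t$ and $A_p$. Throughout I abbreviate $B = B(x,r)\cap U$.

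For part (\ref{lemma-improve-a}), I would write $w(E) = \int_B w\,\chi_E\,dx$ and apply H\"older with exponents $t$ and $t'$ to obtain
$$w(E) \leq \left(\int_B w^t\,dx\right)^{1/t} |E|^{1/t'} = \left(\frac{1}{|B|}\int_B w^t\,dx\right)^{1/t} |B|^{1/t} |E|^{1/t'}.$$
The $RH_t$ hypothesis replaces $\bigl(|B|^{-1}\int_B w^t\bigr)^{1/t}$ by $[w]_{RH_t}\,w(B)/|B|$, and combining the leftover powers of $|B|$ using $1/t - 1 = -1/t'$ yields the claimed estimate after dividing through by $w(B)$.

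For part (\ref{lemma-improve-b}), I would use the dual trick: write $|E| = \int_E w^{1/p}\,w^{-1/p}\,dx$ and apply H\"older with exponents $p$ and $p'$. Since $-p'/p = 1-p'$, this gives
$$|E| \leq w(E)^{1/p}\left(\int_B w^{1-p'}\,dx\right)^{1/p'}.$$
Dividing by $|B|$ and rewriting via $1 - 1/p = 1/p'$, I would then invoke the $A_p$ condition in the form $\bigl(|B|^{-1}\int_B w^{1-p'}\,dx\bigr)^{p-1} \leq [w]_{A_p}\,|B|/w(B)$. The key arithmetic identity is $p'(p-1)=p$, which ensures that raising the $A_p$ estimate to the power $1/p$ produces precisely the exponent $1/p'$ needed on the $w^{1-p'}$-factor. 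Assembling the pieces gives the stated inequality.

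The doubling consequence falls out by applying part (\ref{lemma-improve-b}) with the ambient ball taken to be $B(x,2r)\cap U$ and with $E = B(x,r)\cap U$ as the subset. Since $(U,\rho,dx)$ is a space of homogeneous type with doubling constant $D$, one has $|B(x,2r)\cap U|\leq D\,|B(x,r)\cap U|$; substituting this and raising both sides to the $p$-th power yields the doubling bound with constant $D^p\,[w]_{A_p}$.

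There is no substantive obstacle here; the argument is essentially a packaging of H\"older plus the definitions. The one place to be careful is the exponent bookkeeping in part (\ref{lemma-improve-b}), tracking $p$, $p'$, $p-1$ and the identity $p'(p-1)=p$, to ensure the powers of $|B|$ and $w(B)$ assemble and cancel cleanly.
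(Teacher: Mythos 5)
Your proof is correct, and it is exactly the standard H\"older-inequality argument that the paper itself invokes by citing the classical case (Garc\'{\i}a-Cuerva and Rubio de Francia) and noting that the proof carries over verbatim to metric balls $B(x,r)\cap U$; your exponent bookkeeping in part (b) (using $(p-1)/p=1/p'$) and the derivation of doubling from part (b) with $E=B(x,r)\cap U$ inside $B(x,2r)\cap U$ are all as intended. Nothing further is needed.
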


In the classical case these estimates can be found in
\cite{garcia-cuerva-rubiodefrancia85}.  
The proofs in the setting of spaces of
homogeneous type are essentially the same, replacing Euclidean
balls/cubes with metric balls.  The doubling property is a consequence
of the first inequality in part~\eqref{lemma-improve-b} and the fact that
$(\gensub,\rho,dx)$ is a space of homogenous type.

\begin{remark}\label{remark:constants}  
  In Lemma~\ref{prop-Apfacts}, we note that
  $[w^{1-p'}]_{A_{p'}(\gensub,\rho,dx)}=[w]_{A_p(\gensub,\rho,dx)}^{{1}/{(p-1)}}$
  and the constants $t,$ $[w]_{RH_t(\gensub,\rho,dx)},$
  $\epsilon,$ and $[w]_{A_{p-\epsilon}(\gensub,\rho,dx)}$ depend only
  on $p$,  bounds for $[w]_{A_p(\gensub,\rho,dx)}$ and structural
  constants of $(\gensub,\rho,dx).$ In particular, if $w\in
  A_p(\Omega,\rho,dx),$ all these constants are uniformly bounded on
  $\gensub$ when $\gensub$ is a metric ball with radius bounded by a
  fixed constant and center in $\Omega$:
in this case the constant $C$ in
  \eqref{eqn:regularitycondition} can be taken independent of
  $\gensub.$ Similarly, all constants appearing in the inequalities
  in Lemma~\ref{lemma-improve} are uniformly bounded on
  $\gensub,$ when $w\in A_p(\Omega,\rho,dx)$ and $\gensub$ is a metric
  ball with radius bounded by a fixed constant and center in $\Omega$.
\end{remark}

\begin{remark} \label{remark:localprop} Note that if  $w\in
  A_p(\Omega,\rho,dx)$ then  $w$ is locally doubling.  
This follows from the last inequality in  Lemma~\ref{lemma-improve} and Remark~\ref{remark:constants}.
\end{remark}

\begin{lemma} \label{lemma:rh-ap-combo} Let $\gensub\subset \R^n$ and
  suppose that $(\gensub,\rho,dx)$ is a space of homogeneous type.  If
  $w\in A_p(\gensub,\rho,dx) \cap RH_t(\gensub,\rho,dx)$, then $w^t
  \in A_q(\gensub,\rho,dx)$, where $q=t(p-1)+1$.
\end{lemma}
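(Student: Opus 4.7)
The plan is to verify the $A_q$ condition for $w^t$ directly from its definition, using the reverse Hölder inequality on the first factor and the $A_p$ condition on the second factor after an elementary exponent computation. The key observation is that with $q = t(p-1)+1$, the exponent $t(1-q')$ collapses to $1-p'$, which is exactly what we need to match the $A_p$ condition for $w$.

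More precisely, I would first record the arithmetic: since $q - 1 = t(p-1)$, we have
\[
q' = \frac{q}{q-1} = \frac{t(p-1)+1}{t(p-1)}, \qquad 1 - q' = -\frac{1}{t(p-1)},
\]
and hence $t(1-q') = -\frac{1}{p-1} = 1 - p'$. With all balls understood to be intersected with $\gensub$, this reduces the $A_q$ quantity for $w^t$ to
\[
\left(\dashint_B w^t\,dx\right)\left(\dashint_B w^{1-p'}\,dx\right)^{t(p-1)}.
\]

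Next I would apply the reverse Hölder hypothesis $w \in RH_t(\gensub,\rho,dx)$ to the first average, obtaining
\[
\dashint_B w^t\,dx \le [w]_{RH_t}^{\,t}\left(\dashint_B w\,dx\right)^{t}.
\]
Substituting this in and factoring out the exponent $t$, the quantity above is dominated by
\[
[w]_{RH_t}^{\,t}\left[\left(\dashint_B w\,dx\right)\left(\dashint_B w^{1-p'}\,dx\right)^{p-1}\right]^{t} \le [w]_{RH_t}^{\,t}\,[w]_{A_p}^{\,t},
\]
where in the last step we used $w \in A_p(\gensub,\rho,dx)$. Taking the supremum over balls gives $w^t \in A_q(\gensub,\rho,dx)$ with the explicit bound $[w^t]_{A_q} \le ([w]_{RH_t}[w]_{A_p})^{t}$.

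I do not anticipate a real obstacle here: the proof is purely algebraic once the exponent identity $t(1-q') = 1-p'$ is in hand, and it is insensitive to whether we are in the Euclidean setting or in the space of homogeneous type $(\gensub,\rho,dx)$, since the definitions in Section~\ref{subsec:weights} are formally identical (averages over $B \cap \gensub$ in place of Euclidean balls). The only mild bookkeeping is to carry the intersections $B(x,r)\cap\gensub$ through the computation, but this changes nothing substantive because both inequalities (reverse Hölder and $A_p$) are assumed in exactly this form.
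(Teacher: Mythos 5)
Your computation is correct: with $q-1=t(p-1)$ one has $(w^t)^{1-q'}=w^{t(1-q')}=w^{1-p'}$, the reverse H\"older inequality gives $\dashint_B w^t\,dx\le [w]_{RH_t}^t\bigl(\dashint_B w\,dx\bigr)^t$ on each $B=B(x,r)\cap \gensub$, and the product then factors as the $t$-th power of the $A_p$ quantity, yielding $[w^t]_{A_q}\le\bigl([w]_{RH_t}[w]_{A_p}\bigr)^t$. The route differs from the paper's only in that the paper gives no argument at all: it cites the classical equivalence of Johnson--Neugebauer and asserts that the proof transfers to spaces of homogeneous type, whereas you verify directly the one implication that is actually needed. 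This buys something concrete: your argument is purely algebraic in the definitions of $A_p(\gensub,\rho,dx)$ and $RH_t(\gensub,\rho,dx)$ (both formulated over the sets $B(x,r)\cap\gensub$), so the ``same proof works in this setting'' claim becomes transparent, and you get an explicit constant. The converse implication in the cited result is the genuinely harder part, but it is never used in the paper, so omitting it is fine. One small point worth recording: your proof tacitly assumes $1<p<\infty$ (so that $p'$ and $q'$ are finite), which matches the paper's definition of $A_p$ for $p>1$ and its only application of the lemma (to $k\in A_{p'}$ with $1<p<\infty$); the case $p=1$, where $A_1$ is defined via the maximal function, would need a separate one-line argument but is not needed here.
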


This result is proved in the classical setting in~\cite{MR1018575}.   The proof is the same in the setting of
spaces of homogeneous type.

\section{The machinery of PDEs}
\label{section:PDE}

Though we are ultimately interested in degenerate equations where the
degeneracy has the particular form of~\eqref{eqn:ellipticityA}
and~\eqref{eqn:ellipticityB}, we need to work in a more general
setting.  Throughout this section, $\Omega\subset \R^n$
  is open, bounded and connected, or $\Omega=\R^n$; 
  $X=(X_1,\ldots,X_m)$ will be a family of $C^\infty$ vector fields
  defined in $\R^n$ and satisfying H\"ormander's condition on every
  bounded subset of $\R^n$ if $\Omega$ is bounded, or generating a Carnot group if $\Omega=\R^n$;
  $\rho$ is the corresponding Carnot-Carath\'eodory metric in $\R^n$;
  and $Q$ is the homogeneous dimension of $\Omega$ with
  respect to $X$.   By
$\lip(\overline{\Omega})$ we mean the collection of Lipschitz
functions on $\overline{\Omega}$ and by $\lip_0(\Omega)$ the
collection of Lipschitz functions whose support is compactly contained
in $\Omega$.

\subsection{Admissible weights}
\label{subsec:soln}

We begin with a definition of the general class of weights we will be considering.

\begin{definition} \label{defn:admissible}
Given a pair of weights $(w,v)$, $w\leq v$, $v,w\in L^1_{\text{loc}}(\R^n),$ and $p$, $1<p<\infty$, we
say that the pair is $p$-admissible in $\Omega$ if:

\begin{enumerate}
\item\label{cond1admissible} $v$ is locally doubling in $(\Omega,\rho,dx)$ and $w\in A_p(\Omega,\rho,dx)$.

\item\label{cond2admissible} Given a compact set $K\subset \Omega$, there
  exists $q>p$ and $r_0>0$ such that if $B$ is a ball with center in $K$ and
  $r(B)<r_0$, then  $B\subset\Omega$ and 
$$\frac{r(B_1)}{r(B_2)}\left(\frac{v(B_1)}{v(B_2)}\right)^{1/q}\leq
C\left(\frac{w(B_1)}{w(B_2)}\right)^{1/p}$$
for all balls  $B_1\subseteq B_2\subseteq B$.
\end{enumerate}
\end{definition}

\begin{remark} 
Our definition generalizes to the two-weight case the definition of
$p$-admissible weight given in~\cite{MR1207810}.
\end{remark}

The importance of this class is that these conditions imply a two-weight Poincar\'e inequality.  When $X=\nabla$, the following result is due to Chanillo and Wheeden~\cite{MR805809}; in the general case it is due to Franchi, Lu and Wheeden~\cite{MR1343563}.

\begin{theorem} \label{thm:wtd-poincare}
Given $p$, $1<p<\infty$, a pair of $p$-admissible weights $(w,v)$ in $\Omega$, and
a compact set $K\subset \Omega$, there exists $r_0>0$ (depending on
 $K$ and $X$) such that if $B$ is a ball with center in
$K$ and $r(B)<r_0$, then for all $f\in
\lip(\overline{B})$, 
\begin{equation}\label{eqn:wtd-poincare}
\left(\frac{1}{v(B)}\int_B |f-f_{B,v}|^qv\,dx\right)^{1/q}\leq 
Cr\left(\frac{1}{w(B)}\int_B|Xf|^pw\,dx\right)^{1/p},
\end{equation}
where $f_{B,v} = \frac{1}{v(B)}\int_B f(y)v(y)\,dy$ and the constant $C$ depends only on $K,$ $\Omega,$ $X$ and the constants in conditions \eqref{cond1admissible} and \eqref{cond2admissible} in Definition \ref{defn:admissible}.
\end{theorem}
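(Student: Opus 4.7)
The approach is to follow the Chanillo--Wheeden / Franchi--Lu--Wheeden strategy: reduce the weighted Poincar\'e inequality to a two-weight fractional integral estimate, then prove the latter using the balance condition in Definition~\ref{defn:admissible}\eqref{cond2admissible}. The first step is to invoke the subrepresentation formula available for H\"ormander vector fields: there exists a constant $c \geq 1$, independent of $B$, such that whenever $B$ is a ball centered in $K$ with $r(B) < r_0$ small enough that $cB \subset \Omega$, every $f \in \lip(\overline{B})$ satisfies
$$|f(x) - f_B| \le C \int_{cB} \frac{\rho(x,y)\,|Xf(y)|}{|B(x,\rho(x,y))|}\,dy$$
for a.e.\ $x \in B$. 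This is a standard consequence of the subunit-ball construction of Nagel--Stein--Wainger combined with the representation ideas of Franchi--Lu--Wheeden, and all the hypotheses needed for it are already built into the Hypothesis H\"ormander framework fixed in Section~\ref{section:PDE}.

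Next, I would pass from the Lebesgue mean $f_B$ to the $v$-weighted mean $f_{B,v}$. Because $|f_B - f_{B,v}| \le v(B)^{-1}\int_B |f - f_B|\,v\,dx$, which is controlled by the $L^q(v)$-mean of $|f - f_B|$ via H\"older's inequality, it suffices to prove the inequality with $f_B$ in place of $f_{B,v}$ at the cost of an absorbed multiplicative constant. Setting $g = |Xf|\chi_{cB}$, the problem then reduces to the two-weight fractional integral bound
$$\left(\int_B |Tg|^q\, v\,dx\right)^{1/q} \le C\, r(B)\, \frac{v(B)^{1/q}}{w(B)^{1/p}} \left(\int_{cB} g^p\,w\,dy\right)^{1/p},$$
where $T g(x) = \int_{cB} \rho(x,y)\,g(y)/|B(x,\rho(x,y))|\,dy$.

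To prove this fractional integral estimate I would decompose $T$ dyadically, writing $Tg(x) = \sum_{k\ge 0} T_k g(x)$, where $T_k$ restricts the integration to the annulus $\{y : 2^{-k-1} r(B) < \rho(x,y) \le 2^{-k} r(B)\}$. On the $k$th annulus the kernel is of size $2^{-k} r(B)/|B(x, 2^{-k} r(B))|$, so H\"older's inequality together with the local integrability of $w^{1-p'}$ (from $w\in A_p$) and the non-concentric doubling estimates of Lemma~\ref{lemma:hormander-nocenter} yields
$$T_k g(x) \le C\, 2^{-k} r(B)\, w\bigl(B(x, 2^{-k}r(B))\bigr)^{-1/p} \Bigl(\int_{B(x, 2^{-k}r(B))} g^p\,w\,dy\Bigr)^{1/p}.$$
Taking the $L^q(v)$ norm over $B$ and summing in $k$ produces a geometric series whose convergence hinges precisely on the balance condition Definition~\ref{defn:admissible}\eqref{cond2admissible} applied to the inclusions $B(x, 2^{-k}r(B)) \subset B(x, r(B)) \subset cB$, together with the fact that $v$ is locally doubling. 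The strict inequality $q > p$ provides the geometric gain $2^{-k(1 - p/q)}$ needed to sum.

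The main obstacle is this last step: the off-diagonal two-weight fractional integral estimate, in which the balance hypothesis must be used in a quantitatively tight way at every dyadic scale. The other ingredients (subrepresentation, H\"older, and the passage from $f_B$ to $f_{B,v}$) are essentially mechanical, but arranging the dyadic decomposition so that one can apply the balance condition to each nested pair of non-concentric balls --- and then sum the resulting geometric series --- is the technical heart of the proof. It is essentially a subelliptic version of the Sawyer--Wheeden two-weight fractional integral theorem adapted to spaces of homogeneous type.
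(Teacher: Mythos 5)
The paper does not actually prove this theorem: it is quoted from Chanillo--Wheeden (\cite{MR805809}, for $X=\nabla$) and Franchi--Lu--Wheeden (\cite{MR1343563}) and used as a black box. Your outline --- subrepresentation formula, passage from $f_{B,v}$ to $f_B$, reduction to a two-weight norm inequality for the potential operator $Tg(x)=\int_{cB}\rho(x,y)|B(x,\rho(x,y))|^{-1}g(y)\,dy$ --- is indeed the strategy of those references, so up to that point your reconstruction is faithful.

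However, the step you yourself identify as the technical heart has a genuine gap as you describe it: the annular decomposition together with the balance condition does \emph{not} produce the geometric factor $2^{-k(1-p/q)}$. Concretely, H\"older and $w\in A_p$ give, as you say, $T_kg(x)\lesssim 2^{-k}r\,w(B(x,2^{-k}r))^{-1/p}\bigl(\int_{B(x,2^{-k}r)}g^pw\,dy\bigr)^{1/p}$; but when you apply condition \eqref{cond2admissible} of Definition~\ref{defn:admissible} to the pair $B(x,2^{-k}r)\subset cB$, it yields $w(B(x,2^{-k}r))^{-1/p}\lesssim 2^{k}\,w(cB)^{-1/p}\bigl(v(cB)/v(B(x,2^{-k}r))\bigr)^{1/q}$, and the factor $2^{k}$ exactly cancels the kernel's $2^{-k}$. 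Taking the $L^q(v,B)$ norm and using doubling of $v$, bounded overlap, and $q/p>1$ then gives each $T_k$ the \emph{full} desired bound $Cr\,v(B)^{1/q}w(B)^{-1/p}\|g\|_{L^p(w,cB)}$ uniformly in $k$, with no decay whatsoever, so the sum over $k$ diverges; the strict inequality $q>p$ alone supplies no gain. This is precisely why Chanillo--Wheeden and Franchi--Lu--Wheeden do not sum annuli term by term but instead invoke (a homogeneous-space version of) the Sawyer--Wheeden two-weight theorem for operators of potential type, whose proof runs through a dyadic discretization with a selection/packing argument. Your scheme can be repaired if one is content with a slightly smaller exponent $q'\in(p,q)$ on the left-hand side: reverse doubling of $v$ then yields a factor $2^{-k\varepsilon(1/q'-1/q)}$ and the series converges (and any $q'>p$ suffices for the Harnack and mean-value machinery in Section 3.4). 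But for the statement as given, with the same $q$ as in the admissibility condition, the key estimate is not established by the argument you propose.
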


We will need a local Sobolev inequality that is a corollary of
Theorem~\ref{thm:wtd-poincare}. The proof uses an argument similar to
that in \cite[p.~80, Theorem~13.1]{MR1683160}.

\begin{corollary}
\label{coro:sobolev}
With the same hypotheses as Theorem~\ref{thm:wtd-poincare},
there exists $R _0>0$ such that if $B$ is a ball with center in
$K$ and $r(B)<R_0$, then for all $f\in
\text{Lip}_0({B})$,
\begin{equation}\label{eqn:sobolev}
\left(\frac{1}{v(B)}\int_B |f|^qv\,dx\right)^{1/q}\leq
Cr\left(\frac{1}{w(B)}\int_B|Xf|^pw\,dx\right)^{1/p}.
\end{equation}
\end{corollary}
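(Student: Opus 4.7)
The plan is to derive the Sobolev-type estimate \eqref{eqn:sobolev} from the Poincar\'e-type estimate \eqref{eqn:wtd-poincare} by a standard extension-and-subtract-the-average argument, following the template of \cite[Thm.~13.1]{MR1683160}. Fix a compact $K\subset \Omega$ and let $r_0$ be as in Theorem~\ref{thm:wtd-poincare}. Given $f\in \lip_0(B)$ with $B=B(x_0,r)$ centered at $x_0\in K$, I would choose a constant $\lambda>1$ to be fixed later and set $B'=B(x_0,\lambda r)$. Extending $f$ by zero outside $B$ gives $f\in\lip(\overline{B'})\subset\lip_0(B')$, and I would declare $R_0 = r_0/\lambda$ so that Theorem~\ref{thm:wtd-poincare} applies on $B'$ whenever $r(B)<R_0$.

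Applying \eqref{eqn:wtd-poincare} on $B'$ and using that $Xf$ is supported in $B$ gives
\[
\left(\frac{1}{v(B')}\int_{B'}|f-f_{B',v}|^q v\,dx\right)^{1/q}
\leq C\lambda r\left(\frac{1}{w(B')}\int_{B}|Xf|^p w\,dx\right)^{1/p},
\]
and the local doubling of $w\in A_p(\Omega,\rho,dx)$ (Lemma~\ref{lemma-improve}\eqref{lemma-improve-b} combined with Remark~\ref{remark:localprop}) lets me replace $w(B')$ by $w(B)$ up to a constant depending on $K$ and $\lambda$. For the left-hand side, since $f\equiv 0$ on $B'\setminus B$, on that set $|f-f_{B',v}|=|f_{B',v}|$, so
\[
|f_{B',v}|^q\, v(B'\setminus B)\leq \int_{B'}|f-f_{B',v}|^q v\,dx.
\]
Then by the triangle inequality
\[
\left(\frac{1}{v(B')}\int_B |f|^q v\,dx\right)^{1/q}
\leq \left(1+\bigl(v(B')/v(B'\setminus B)\bigr)^{1/q}\right)\left(\frac{1}{v(B')}\int_{B'}|f-f_{B',v}|^q v\,dx\right)^{1/q}.
\]

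The crucial step is the choice of $\lambda$: I need $v(B'\setminus B)\geq c\,v(B')$ uniformly for $x_0\in K$ and $r<R_0$, i.e., a local reverse-doubling property for $v$. This follows from the fact that $v$ is locally doubling on the space of homogeneous type $(\Omega,\rho,dx)$: since $v$ is nonatomic and doubling, one can iterate the doubling bound a finite number of times to produce $\lambda=\lambda(K,R_0)>1$ and $\delta=\delta(K,R_0)>0$ with $v(B(x_0,\lambda r))\geq (1+\delta)v(B(x_0,r))$ for all admissible balls, which gives $v(B'\setminus B)/v(B')\geq \delta/(1+\delta)$.

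Combining the previous inequalities and using local doubling of $v$ one more time to replace $v(B')$ by $v(B)$ on the left produces
\[
\left(\frac{1}{v(B)}\int_B|f|^q v\,dx\right)^{1/q}\leq C r\left(\frac{1}{w(B)}\int_B|Xf|^p w\,dx\right)^{1/p},
\]
which is \eqref{eqn:sobolev}. The main obstacle I anticipate is the uniform local reverse-doubling for $v$: unlike the $A_p$ case (where $w$ automatically enjoys reverse doubling via Lemma~\ref{lemma-improve}\eqref{lemma-improve-b} and the reverse-doubling of Lebesgue measure), here $v$ is only assumed to be locally doubling, so the argument must be packaged carefully on the compact set $K$ so that all constants—$\lambda$, $\delta$, the doubling constants of $v$ and $w$, and the constant $C$ from Theorem~\ref{thm:wtd-poincare}—can be chosen uniformly, which is exactly the content of Remark~\ref{remark:constants}.
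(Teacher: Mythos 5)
Your argument is correct and is essentially the proof the paper points to: the authors omit the details and cite \cite[Theorem~13.1]{MR1683160}, whose argument is exactly your scheme of extending $f$ by zero to $B'=B(x_0,\lambda r)$, applying the Poincar\'e inequality \eqref{eqn:wtd-poincare} there, and absorbing the average $f_{B',v}$ using that $f$ vanishes on $B'\setminus B$ together with $v(B'\setminus B)\gtrsim v(B')$ and the (local) doubling of $v$ and $w$, with uniformity handled as in Remark~\ref{remark:constants}. One small caveat: the uniform lower bound $v(B'\setminus B)\geq c\,v(B')$ is not a consequence of doubling plus nonatomicity alone, but follows here from the annulus geometry — for $x_0\in K$ and $r$ small, connectedness of $\Omega$ and \eqref{ineq:rhotopo} give a point $y$ with $\rho(x_0,y)\sim \lambda r$ whose ball $B(y,cr)$ lies in $B'\setminus B$ and in a slightly larger compact subset of $\Omega$, after which finitely many applications of the local doubling of $v$ yield the claim with constants uniform in $x_0\in K$ and $r<R_0$.
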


In order to prove existence of weak solutions to $\Lap_pu=0$
(Theorem~\ref{existence}) we will need to assume a Sobolev inequality
on $\Omega$: for all $f\in \lip_0(\Omega)$,
\begin{equation}\label{eqn:global-sobolev}
\left(\int_\Omega |f(x)|^p v(x)\,dx \right)^{1/p} \leq C
\left(\int_\Omega |Xf(x)|^p w(x)\,dx \right)^{1/p}.
\end{equation}

There are several situations where this ``global'' Sobolev inequality
holds. When $X=\grad$, there are no
restrictions on the size of the balls for which the Poincar\'e
inequality holds, and so~\eqref{eqn:global-sobolev} is true for any
bounded $\Omega$.  
More generally, the global Sobolev inequality \eqref{eqn:global-sobolev} is
a  consequence of Corollary~\ref{coro:sobolev} when
$\Omega$ is such that a Hardy-type inequality is satisfied.

\begin{corollary}\label{coro:globalsobolev}
  Let $1<p<\infty$ and consider a pair of $p$-admissible weights
  $(w,v)$ in a neighborhood of $\Omega.$ If $\Omega$ is a bounded
  domain such that
\begin{equation}\label{hardy}
  \int_{\Omega}\frac{|f|^p}{\dist_\rho(x,\partial \Omega)^p}w\,dx\leq C_\Omega\int_{\Omega}|X(f)|^pw\,dx,\quad f\in \text{Lip}_0(\Omega),
\end{equation}
 then \eqref{eqn:global-sobolev} holds for 
$f\in \text{Lip}_0(\Omega)$.
\end{corollary}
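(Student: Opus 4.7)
My plan is to derive \eqref{eqn:global-sobolev} in its natural form with the gained exponent $q$ on the left,
\[
\Bigl(\int_\Omega |f|^q v\,dx\Bigr)^{1/q} \le C\Bigl(\int_\Omega |Xf|^p w\,dx\Bigr)^{1/p},
\]
from which the displayed inequality follows a fortiori (since $v(\Omega)<\infty$ and $q\ge p$, H\"older passes $L^q(v)$ to $L^p(v)$ on the left). The strategy is to localize Corollary \ref{coro:sobolev} via a finite partition of unity and then use the Hardy hypothesis \eqref{hardy} to absorb the zeroth-order term produced by the Leibniz rule.

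First, since $\overline\Omega$ is $\rho$-compact (by \eqref{ineq:rhotopo}) and sits inside the neighborhood in which $(w,v)$ is $p$-admissible, I would pick a compact set $K$ with $\overline\Omega\subset K^\circ$ inside this neighborhood and let $R_0$ be the radius furnished by Corollary \ref{coro:sobolev} applied to $K$. Fixing $r_0\in(0,R_0)$, I would then choose finitely many balls $B_i=B(x_i,r_0)$, $x_i\in\overline\Omega$, $i=1,\dots,N$, whose half-concentric versions already cover $\overline\Omega$ and which themselves have uniformly bounded overlap $N_0$ depending only on structural constants. On this cover I would construct a CC-Lipschitz partition of unity $\{\phi_i\}$ with $\phi_i\in\lip_0(B_i)$, $0\le\phi_i\le1$, $\sum_i\phi_i\equiv 1$ on a neighborhood of $\overline\Omega$, and $|X\phi_i|\le C/r_0$; this can be built from the elementary $\rho$-Lipschitz hat functions $\max(0,1-\rho(\cdot,x_i)/r_0)$ after normalizing by their sum. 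Extending $f$ by zero outside $\Omega$, each $f\phi_i$ lies in $\lip_0(B_i)$, so Corollary \ref{coro:sobolev} is applicable to it.

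Second, applying Corollary \ref{coro:sobolev} to $f\phi_i$ on $B_i$ and using the Leibniz bound $|X(f\phi_i)|\le|Xf|+Cr_0^{-1}|f|$ yields, after collecting the finitely many constants $r_0\,v(B_i)^{1/q}w(B_i)^{-1/p}$ into a single $C$,
\[
\int_{B_i}|f\phi_i|^q v\,dx\le C\Bigl(\int_{B_i}|Xf|^p w\,dx+\int_{B_i}|f|^p w\,dx\Bigr)^{q/p}.
\]
Summing in $i$, combining the pointwise bound $|f|^q\le N_0^{q-1}\sum_i|f\phi_i|^q$ on the left with the elementary $\sum_i a_i^{q/p}\le(\sum_i a_i)^{q/p}$ (valid for $a_i\ge 0$ and $q/p\ge 1$) plus the bounded overlap on the right, I obtain
\[
\int_\Omega|f|^q v\,dx\le C\Bigl(\int_\Omega|Xf|^p w\,dx+\int_\Omega|f|^p w\,dx\Bigr)^{q/p}.
\]

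Third, because $\Omega$ is bounded I have $D:=\diam_\rho(\Omega)<\infty$ and $\dist_\rho(x,\partial\Omega)\le D$ everywhere in $\Omega$, so the Hardy hypothesis \eqref{hardy} gives
\[
\int_\Omega|f|^p w\,dx\le D^p\int_\Omega\frac{|f|^p w}{\dist_\rho(x,\partial\Omega)^p}\,dx\le D^p C_\Omega\int_\Omega|Xf|^p w\,dx,
\]
which absorbs the unwanted term into the gradient integral and, upon taking $(1/q)$-th powers, yields the stated inequality. The main obstacle in this plan is precisely the appearance of the lower-order term $\int_\Omega|f|^p w$ produced by differentiating the partition of unity: without a way to control it by $\int_\Omega|Xf|^p w$ the localization would be circular, and \eqref{hardy} combined with boundedness of $\Omega$ is the exact hypothesis needed to complete the absorption; this explains why the Hardy assumption is imposed in the statement.
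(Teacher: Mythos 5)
Your argument is correct in substance, but it is a genuinely different (and more elementary) route than the one the paper points to: the paper omits the proof and says it follows the spirit of \cite[Theorem 4.3]{MR2508841}, using a \emph{Whitney} decomposition of $\Omega$ into metric balls with radii comparable to $\dist_\rho(\cdot,\partial\Omega)$ and a subordinate partition of unity, so that the cutoff terms $|f|\,|X\phi_i|\lesssim |f|/r_i\lesssim |f|/\dist_\rho(\cdot,\partial\Omega)$ are absorbed by \eqref{hardy} at its natural scale. You instead exploit the hypothesis that $(w,v)$ is $p$-admissible in a neighborhood of $\Omega$: taking $K=\overline\Omega$ inside that neighborhood, Corollary~\ref{coro:sobolev} applies to balls of a \emph{fixed} small radius centered anywhere on $\overline\Omega$ (Definition~\ref{defn:admissible}(2) even guarantees such balls stay in the neighborhood), so a finite uniform-radius cover suffices, and you only need \eqref{hardy} through the crude consequence $\int_\Omega|f|^pw\,dx\le \diam_\rho(\Omega)^p\,C_\Omega\int_\Omega|Xf|^pw\,dx$, i.e.\ as a weighted Poincar\'e inequality. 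What each approach buys: yours is shorter and all constants are harmlessly lumped since there are finitely many balls; the Whitney route is what one must do if the admissibility/local Sobolev inequality is only available for balls contained in $\Omega$ itself, and it uses the full strength of the Hardy weight rather than its infimum. Two routine points in your construction deserve care: (i) the hat functions $\max(0,1-\rho(\cdot,x_i)/r_0)$ are Lipschitz for $\rho$ but, by \eqref{ineq:rhotopo}, only H\"older continuous for the Euclidean metric, whereas $\lip_0(B_i)$ in this paper is the Euclidean Lipschitz class on which $X_j\varphi=X_j\cdot\nabla\varphi$ is defined a.e.; use Euclidean (or smooth) cutoffs subordinate to slightly smaller balls inside the metric balls $B_i$ instead, which still gives $|X\phi_i|\le C(K,r_0)$, and this constant need not scale correctly since it is absorbed into the final constant; (ii) make the supports of the $\phi_i$ compactly contained in $B_i$ and the normalizing sum bounded below on a neighborhood of $\overline\Omega$ (e.g.\ cutoffs equal to $1$ on the half-radius balls that already cover $\overline\Omega$), so that $f\phi_i\in\lip_0(B_i)$ and $\sum_i\phi_i\equiv1$ on $\supp f$. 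With these standard adjustments your proof is complete.
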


We omit the proof of Corollary~\ref{coro:globalsobolev} because it is in the spirit of
the proof of \cite[Theorem 4.3]{MR2508841}. It requires a Whitney
decomposition for $\Omega$ in terms of metric balls, and a
corresponding partition of unity.
We stress that the global Sobolev inequality
\eqref{eqn:global-sobolev} will only be assumed in
Theorem~\ref{existence} and is not needed for the proof of any of our  other results.

\subsection{The spaces $S^p(\Omega)$ and $S^p_0(\Omega)$}

We now define the spaces in which weak solutions to our equations
live.    For simplicity, we state our definition assuming that
$\Omega$ is bounded.  If $\Omega=\R^n$, then we have to modify our definition by everywhere
replacing $\lip(\overline{\Omega})$ with $\lip_0(\R^n)$. 

In  this section we assume that  $(w,v)$ is a pair of $p$-admissible weights and $A=A(x)$
is a real, locally integrable, $m\times m$ symmetric matrix defined in $\Omega$ that satisfies the degenerate
ellipticity condition
\begin{equation} \label{eqn:elliptic}
w(x)^{2/p}|\xi|^2\leq \langle A(x)\xi,\xi\rangle \leq
v(x)^{2/p}|\xi|^2 \qquad \xi\in \R^m, \,  x\in \Omega.
\end{equation}
We define the space $L^p_A(\Omega)$ to consist of all measurable
vector valued functions $\f$ such that
\[ \|\f\|_A = \left(\int_\Omega \langle A\f,\f\rangle^{p/2} \,dx \right)^{1/p} <
\infty.  \]
Since $A$ is symmetric and positive semi-definite, $A^{1/2}$ exists; hence,
\[ \langle A\f,\f\rangle^{1/2} = |A^{1/2} \f|. \]
It follows immediately from this identity that $\|\cdot\|_A$ is a
norm.  In~\cite{MR2574880} (see
also~\cite{monticelli-rodney-wheedenP})  it was shown that with this norm
$L_A^p(\Omega)$ is a Banach space of equivalence classes of functions
such that $\|\f-\mathbf{g}\|_A =0$.  However, by the ellipticity
condition, we have that 
\[ \|\f\|_A \geq \left(\int_\Omega |\f|^pw\,dx\right)^{1/p}; \]
since $w\in A_p$ it is non-zero almost everywhere,  so
$\|\f\|_A=0$ if and only if $\f=\mathbf{0}$ almost everywhere.  Hence,
we can identify the elements of $L_A^p(\Omega)$ as functions defined
up to a set of measure $0$. 

If $\Omega$ is  bounded and $u\in \lip(\overline{\Omega})$, then $X u\in L^p_A(\Omega)$:
by the ellipticity condition and since $v\in L^1_{loc}(\R^n),$
\[ \|X u\|_A \leq \left(\int_\Omega |X u|^p v\,dx\right)^{1/p}
\leq \|Xu\|_\infty v(\Omega)^{1/p}< \infty.\]
%
Therefore, we can define the space $S^p(\Omega)$ to be the closure of
$\lip(\overline{\Omega})$ with respect to the norm
\[ \|u\| = \|u\|_{L^p(v,\Omega)} + \|X u\|_A. \]
Properly, $S^p(\Omega)$ is a Banach space that consists of equivalence
classes of Cauchy sequences $\{u_j\}$ of Lipschitz functions.
However, this sequence converges to a function $u\in L^p(v,\Omega)$, and the
sequence $\{X u_j\}$ converges to a function $\mathbf{U}\in L^p_A(\Omega)$
and the map $\{u_j\} \mapsto (u,\mathbf{U})$ is an isomorphism
  between $S^p(\Omega)$ and a closed subset of $L^p(v,\Omega)\times
  L^p_A(\Omega)$.  

Furthermore, if $u$ is the first element of such a
  pair, then $\mathbf{U}$ is uniquely determined.   Suppose to the
  contrary that $(u,\mathbf{U})$ and $(u,\mathbf{V})$ are in the
  image.  This is equivalent to saying that there exists a Cauchy
  sequence $\{u_j\}$ in $S^p(\Omega)$ such that $u_j \rightarrow 0$ in
  $L^p(v,\Omega)$ and $X u_j \rightarrow \mathbf{U}\neq 0$ in
  $L^p_A(\Omega)$.  By the ellipticity conditions and since $w\leq v$,
  the same limits hold in $L^p(w,\Omega)$.  However, since $w\in
  A_p(\Omega,\rho,dx)$, this is impossible.    Let $B\subset \Omega$
  be any ball and let $g\in
  C_0^\infty(B)$.  Then
\begin{align*}
\left|\int_B g\mathbf{U}\,dx \right| 
& \leq \left|\int_B g(\mathbf{U} -X u_j)\,dx\right| + \left|\int_B gX u_j\,dx\right| \\
& \leq \left(\int_B |\mathbf{U}-X u_j|^p w\,dx\right)^{1/p} \left(\int_B |g|^{p'} w^{1-p'}\,dx\right)^{1/p'} \\
& \qquad + \left(\int_B |u_j|^p w\,dx\right)^{1/p} \left(\int_B |X g|^{p'} w^{1-p'}\,dx\right)^{1/p}.
\end{align*}
By assumption, the first integral in each term goes to $0$; since
$w^{1-p'}\in A_{p'}(\Omega,\rho,dx)$, and $g,\,Xg\in L^\infty$, the second integral in each term
is finite.  Therefore, taking the limit, we get that the initial
integral is $0$; since this holds for all such $B$ and $g$,
$\mathbf{U}=\mathbf{0}$.  

\begin{remark}
The uniqueness of this representation of a Cauchy sequence $\{u_j\}$
by the pair $(u,{\bf U})$ depends strongly on the fact that $w\in
A_p(\Omega,\rho,dx)$.  In general, this need not be the case, as is
shown by the example in~\cite[p.~91]{MR643158}).
\end{remark}

Given $(u,{\bf U})$  as above with ${\bf U}=(U_1,\ldots,U_m)$, then $u$ is a
distribution and ${\bf U}$ corresponds to the
distributional $X_i$-derivative of $u$:  that is,
\begin{equation}\label{eqn:distderiv}
\int_\Omega U_i \vp\,dx=\int_\Omega u X_i^*\vp\,dx \qquad \vp\in C_0^\infty(\Omega).
\end{equation}
This follows from a calculation similar to that used to prove the
uniqueness of ${\bf U}$.  Indeed, we first note that both of the
integrals appearing in \eqref{eqn:distderiv} exist: after multiplying
and dividing by $w,$ use H\"older's inequality, and that $u,\,{\bf
  U}\in L^p(w,\Omega),$ $\varphi$ and $X_i^*\varphi$ are bounded, and
$w^{1-p'}\in L^1_{\text{loc}}(\Omega).$ To prove the equality, let
$\{u_j\}$ be a Cauchy sequence of Lipschitz functions in $S^p(\Omega)$
associated to $(u,{\bf U})$. Then
$$\int_\Omega |u_j-u|^pw\,dx\rightarrow 0 \quad \text{and} \quad \int_\Omega |{\bf U}-Xu_j|^pw\,dx\rightarrow 0,$$
and if $\vp\in C_0^\infty(\Omega)$ with $\supp(\vp)=D$, 
\begin{align*}
\Big|\int_\Omega uX_i^*\vp-U_i\vp\,dx \Big| 
& = \Big|\int_{D} (u - u_j)X^*_i \vp + \int_D (U_i-X_iu_j)\vp \,dx\Big| \\
& \leq \|X^*_i\vp\|_\infty \left(w^{1-p'}(D)\right)^{1/p'}\left(\int_D |u- u_j|^p w\,dx\right)^{1/p}\\
&\qquad+ \left(w^{1-p'}(D)\right)^{1/p'}\|\vp\|_\infty\left(\int_D |U_i-X_iu_j|^{p} w\,dx\right)^{1/p},
\end{align*}
and the last terms go to zero as $j\ra \infty$.

Hereafter, given a pair $(u,\mathbf{U})$ representing a Cauchy
sequence of Lipschitz functions in $S^p(\Omega),$ we use $X u$ instead
of $\mathbf{U}$ and, by a small abuse of notation, we write $u\in
S^p(\Omega)$.

We can also see that $S^p(\Omega)$ is a
reflexive Banach space, a fact we will need below in the proof of Theorem~\ref{existence}.   Given
$(u,\mathbf{U})\in L^p(v,\Omega)\times L^p_A(\Omega)$, the map
$T(u,\mathbf{U})=(u,A^{1/2}\mathbf{U})$ is an isometry into
$L^p(v,\Omega)\times L^p(\Omega)$ which is a reflexive space since
$1<p<\infty$.   Identifying $S^p(\Omega)$ with its image, we have that
$T(S^p(\Omega))$ is a closed subspace of a reflexive Banach space; it
follows that $S^p(\Omega)$ is also reflexive.  

We now define the space $S^p_0(\Omega)$ to be the closure of
$\lip_0({\Omega})$ in $L^p_A(\Omega)$:  more precisely,
$S^p_0(\Omega)$ consists of the equivalence classes of sequences
$\{u_j\}$ such that $\{Xu_j\}$ converges in $L^p_A(\Omega)$.  By an
abuse of notation, we will denote the Cauchy sequences with a function
$u$.

\begin{remark} If the global
Sobolev inequality~\eqref{eqn:global-sobolev} holds, then
$S^p_0(\Omega)\subset S^p(\Omega).$  To see this,  fix a Cauchy sequence
$\{u_j\}$ of an equivalence class in $S^p_0(\Omega)$.  Then this sequence
is Cauchy in $L^p(v,\Omega)$: by~\eqref{eqn:global-sobolev} and
the ellipticity condition,
\begin{multline*}
 \left(\int_\Omega |u_j-u_k|^p v\,dx\right)^{1/p}
\leq C \left(\int_\Omega |X(u_j-u_k)|^p w\,dx\right)^{1/p} \leq C\|X(u_j-u_k)\|_A.
\end{multline*}
Thus $\{u_j\}$ is a Cauchy sequence with respect to the norm in $S^p(\Omega)$ and the inclusion follows.

If the global Sobolev inequality does not hold, we may not have that
$S^p_0(\Omega)\subset S^p(\Omega).$   However, except for
Theorem~\ref{existence}, we do not
require this inclusion to hold.  (See
Section~\ref{sec:weaksolutions} for details.)
\end{remark}

\subsection{Weak solutions} \label{sec:weaksolutions}

In  this section we again assume that  $(w,v)$ is a pair of $p$-admissible weights in $\Omega$ and  let $A=A(x)$
be an $m\times m$ real symmetric matrix defined in $\Omega$ that satisfies the degenerate
ellipticity condition \eqref{eqn:elliptic}.  

To define  weak solutions, we need a functional that
acts in some ways like an inner product on $S^p(\Omega)$.   Given
$u,\vp\in \lip(\overline{\Omega})$, define 
$$a_0^p(u,\vp)=\int_\Omega \langle AXu,Xu\rangle^{\frac{p-2}{2}}\langle
AXu,X\vp\rangle\,dx.$$
This quantity is finite for all such $u$ and $\vp$.  Since $A$
is symmetric we have the inequality 
\[ |\langle A \xi,\eta\rangle|\leq \langle
A\xi,\xi\rangle^{1/2}\langle A\eta,\eta\rangle^{1/2}.  \]
Hence, by H\"older's inequality,
\begin{align}
|a_0^p(u,\vp)|
&\leq \int_\Omega \langle AXu,Xu\rangle^{\frac{p-1}{2}}\langle
AX\vp,X\vp\rangle^{1/2}\,dx \notag \\
&\leq \Big(\int_\Omega \langle AX\vp,X\vp\rangle^{p/2}\,dx\Big)^{1/p}
\Big(\int_\Omega \langle AXu,Xu\rangle^{p/2}\,dx\Big)^{1/p'} \notag\\
&=\|X\vp\|_A\|Xu\|_A^{p-1}. \label{eqn:finite}
\end{align}

We can extend $a_0^p$ to functions $u,\vp$ in $S^p(\Omega)$ or in $S_0^p(\Omega)$.

\begin{lemma}\label{lemma:convergence}
  If $u,\,\varphi\in S^p(\Omega)$ are the limits of the Cauchy
  sequences $\{u_k\}_{k\in\N}\subset \text{Lip}(\overline{\Omega})$
  and $\{\varphi_k\}_{k\in\N}\subset \text{Lip}(\overline{\Omega})$,
 then
\[
a_0^p(u,\vp) := \lim_{k\rightarrow \infty} a_0^p(u_k,\vp_k) =
\lim_{k\rightarrow\infty}\int_\Omega \langle AXu_k,Xu_k\rangle^{\frac{p-2}{2}}\langle AXu_k,X\vp_k\rangle dx
\]
is well defined:  the limit exists and does not depend on the choice
of Cauchy sequences.   The same conclusion holds if $u$ or $\varphi$
are in $S_0^p(\Omega)$.  

\end{lemma}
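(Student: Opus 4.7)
The plan is to recast $a_0^p$ via $A^{1/2}$ and then show the sequence $\{a_0^p(u_k,\varphi_k)\}$ is Cauchy in $\R$ by exploiting continuity of the vector-valued map $\xi\mapsto |\xi|^{p-2}\xi$. Setting $F_k := A^{1/2}Xu_k$ and $G_k := A^{1/2}X\varphi_k$, the symmetry of $A$ gives $\langle A\xi,\eta\rangle = \langle A^{1/2}\xi,A^{1/2}\eta\rangle$, so
$$a_0^p(u_k,\varphi_k) = \int_\Omega |F_k|^{p-2}\langle F_k,G_k\rangle\,dx.$$
Moreover $\|Xu_k - Xu_j\|_A = \|F_k - F_j\|_{L^p(\Omega,\R^m)}$, so $\{F_k\},\{G_k\}$ are Cauchy in $L^p(\Omega,\R^m)$ with limits $F,G$.

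To show $\{a_0^p(u_k,\varphi_k)\}$ is Cauchy I would split
$$a_0^p(u_k,\varphi_k) - a_0^p(u_j,\varphi_j) = \int_\Omega |F_k|^{p-2}\langle F_k,G_k - G_j\rangle\,dx + \int_\Omega \big\langle |F_k|^{p-2}F_k - |F_j|^{p-2}F_j,\, G_j\big\rangle\,dx.$$
The first integral is bounded by $\|F_k\|_p^{p-1}\|G_k - G_j\|_p$, exactly as in \eqref{eqn:finite}, and tends to zero since $\{F_k\}$ is bounded and $\{G_k\}$ is Cauchy in $L^p$. For the second integral I would invoke the standard pointwise inequalities $\big| |\xi|^{p-2}\xi - |\eta|^{p-2}\eta\big| \leq C_p(|\xi|+|\eta|)^{p-2}|\xi-\eta|$ when $p\geq 2$, and $\big||\xi|^{p-2}\xi - |\eta|^{p-2}\eta\big|\leq C_p |\xi-\eta|^{p-1}$ when $1<p<2$. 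Applying H\"older with exponents $(\tfrac{p}{p-2},p,p)$ in the first regime and $(\tfrac{p}{p-1},p)$ in the second, combined with uniform boundedness of $\{F_k\},\{G_j\}$ and the Cauchy property of $\{F_k\}$ in $L^p$, shows the second integral also tends to zero. Hence the limit defining $a_0^p(u,\varphi)$ exists.

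To verify independence of the chosen Cauchy sequences, let $\{u'_k\}\subset\lip(\overline{\Omega})$ be another sequence representing $u$, and $\{\varphi'_k\}$ another representing $\varphi$. Interleaving yields sequences $u_1,u'_1,u_2,u'_2,\dots$ and $\varphi_1,\varphi'_1,\varphi_2,\varphi'_2,\dots$ which are again Cauchy in $S^p(\Omega)$ with the same pairs $(u,Xu)$ and $(\varphi,X\varphi)$, thanks to the uniqueness of the distributional derivative established just before the lemma. The Cauchy argument above applied to the interleaved sequence forces the two subsequential limits $\lim_k a_0^p(u_k,\varphi_k)$ and $\lim_k a_0^p(u'_k,\varphi'_k)$ to coincide. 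The proof for $u$ or $\varphi$ in $S_0^p(\Omega)$ is identical: only $L^p_A$-convergence of the gradients $Xu_k$ is needed in the estimates above.

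The main obstacle is the nonlinearity in the first slot of $a_0^p$: the H\"older-type bound \eqref{eqn:finite} is linear in $\varphi$ and handles only the $G_k - G_j$ term, while the $u$-slot demands quantitative continuity of the vector $p$-Laplacian nonlinearity. This is exactly where the dichotomy $p\geq 2$ versus $1<p<2$ enters and dictates the two different pointwise inequalities and H\"older exponent groupings.
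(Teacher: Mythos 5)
Your proposal is correct and follows essentially the same route as the paper's proof: the same splitting of $a_0^p(u_k,\vp_k)-a_0^p(u_j,\vp_j)$ into a term linear in $X(\vp_k-\vp_j)$ bounded as in \eqref{eqn:finite} and a term handled via $A^{1/2}$ together with the pointwise inequalities for $\xi\mapsto|\xi|^{p-2}\xi$ in the two regimes $p\ge 2$ and $1<p<2$, with the same H\"older exponent groupings. Your interleaving argument for independence of the representing sequences is just a tidy formalization of the paper's closing remark that ``essentially the same argument'' gives well-definedness, so nothing substantive differs.
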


\begin{proof}
We prove this for $u,\,\varphi\in S^p(\Omega)$; the proof for
$S_0^p(\Omega)$ is identical. 
Fix Cauchy sequences $\{u_k\}$ and $\{\varphi_k\}$ in
$\text{Lip}({\overline\Omega})$; we will prove that
$\{a_0^p(u_k,\varphi_k)\}$ is a Cauchy sequence in $\R$.  By the
definition,
\begin{align*}
&|a_0^p(u_k,\varphi_k)-a_0^p(u_m,\varphi_m)|\\
&\quad =\bigg|\int_\Omega \langle AXu_k,Xu_k\rangle^{\frac{p-2}{2}}\langle AXu_k,X\vp_k\rangle \\
& \qquad \qquad  -  \langle AXu_m,Xu_m\rangle^{\frac{p-2}{2}}\langle AXu_m,X\vp_m\rangle dx\bigg|\\
&\quad \le  \int_\Omega  \langle AXu_k,Xu_k\rangle^{\frac{p-2}{2}}\left|\langle AXu_k,X(\vp_k-\vp_m)\rangle\right|\,dx\\
&\qquad \qquad +\int_\Omega\left|\langle\langle
  AXu_k,Xu_k\rangle^{\frac{p-2}{2}}AXu_k-
\langle AXu_m,Xu_m\rangle^{\frac{p-2}{2}}AXu_m,X\vp_m\rangle\right|\,dx\\
&\quad = I_1+I_2.
\end{align*}

The first term is easy to estimate:  the same computation as in \eqref{eqn:finite} shows that
\[
 I_1  \le \| Xu_k \|_A^{p-1}\|X\vp_k-X\vp_m\|_A,
\]
and so $I_1\to 0$ as $k,\,m\to \infty.$

The estimate for $I_2$ depends on the size of $p.$  If $p\ge 2,$ we
have the vector inequality
\begin{equation*}\label{ineqplarger2}
\left||\xi|^{p-2}\xi-|\eta|^{p-2}\eta\right|\le (p-1)\left(|\xi|^{p-2}+|\eta|^{p-2}\right)|\xi-\eta|,
\end{equation*}
where $ \xi,\,\eta\in\R^m$.  (This is implicit in~\cite[Chapter 10, p.~73]{MR2242021}.) Then, since 
 $|A^{\frac{1}{2}}Xu|=\langle AXu,Xu\rangle^{\frac{1}{2}}$, we have that for any
 $u\in \text{Lip}(\overline{\Omega})$, 
 \begin{align*}
I_2&=\int_\Omega\left|\langle\langle AXu_k,Xu_k\rangle^{\frac{p-2}{2}}AXu_k-\langle AXu_m,Xu_m\rangle^{\frac{p-2}{2}}AXu_m,X\vp_m\rangle\right|\,dx\\
&= \int_\Omega\left|\langle|A^{\frac{1}{2}}Xu_k|^{p-2} A^{\frac{1}{2}}Xu_k-|A^{\frac{1}{2}}Xu_m|^{p-2} A^{\frac{1}{2}}Xu_m,A^{\frac{1}{2}}X\vp_m\rangle\right|\,dx\\
&\le\int_\Omega  \left||A^{\frac{1}{2}}Xu_k|^{p-2} A^{\frac{1}{2}}Xu_k-|A^{\frac{1}{2}}Xu_m|^{p-2} A^{\frac{1}{2}}Xu_m\right| \left|A^{\frac{1}{2}}X\vp_m\right|\,dx\\
&\leq C \int_\Omega  \left(|A^{\frac{1}{2}}Xu_k|^{p-2} +|A^{\frac{1}{2}}Xu_m|^{p-2} \right) \left|A^{\frac{1}{2}}Xu_k-A^{\frac{1}{2}}Xu_m\right| \left|A^{\frac{1}{2}}X\vp_m\right|\,dx.
\end{align*}
Thus, by H\"older's inequality, 
\begin{align*}
I_2&\leq C \left(\int_\Omega (|A^{\frac{1}{2}}Xu_k|^{p-2} 
+|A^{\frac{1}{2}}Xu_m|^{p-2} )^{\frac{p}{p-2}}\right)^{\frac{p-2}{p}} 
\left(\int_\Omega  |A^{\frac{1}{2}}X(u_k-u_m)|^p\,dx\right)^{\frac{1}{p}}\\
&\quad\times\left(\int_\Omega
  |A^{\frac{1}{2}}X\vp_m|^{p}\,dx\right)^{\frac{1}{p}}\\
&\leq C (\|Xu_k\|_A^{p-2} +\|Xu_m\|_A^{p-2})\,\|Xu_k-Xu_m\|_A \|X\vp_m\|_A,
\end{align*}
from which it follows that $I_2\to 0$ as $k,\,m\to\infty.$

In order to estimate $I_2$ when $1<p< 2,$ we have a similar vector inequality,
\begin{equation*}\label{ineqplarger2}
\left||\xi|^{p-2}\xi-|\eta|^{p-2}\eta\right|\le c_p \, |\xi-\eta|^{p-1},
\end{equation*}
where $ \xi,\,\eta\in\R^m$.  (See~\cite[p.~43]{MR2242021}.)  We can now proceed as in the case $p\ge
2$ to get
\begin{align*}
I_2 &\leq C  \int_\Omega  \left|A^{\frac{1}{2}}Xu_k-A^{\frac{1}{2}}Xu_m\right|^{p-1} \left|A^{\frac{1}{2}}X\vp_m\right|\,dx\\ 
&\le C\,\|Xu_k-Xu_m\|_A^{p-1} \|X\vp_m\|_A.
\end{align*}

Finally, essentially the same argument shows that if $\{\tilde{u}_k\}$
and $\{\tilde{\vp}_k\}$ are also sequences converging to $u$ and
$\vp$, then 
$$\lim_{k\to \infty}a_0^p(u_k,\vp_k)=\lim_{k\to
  \infty}a_0^p(\tilde{u}_k,\tilde{\vp}_k).$$
This completes the proof.
\end{proof}

We can now define a weak solution to the equation $\Lap_pu=0$.  

\begin{definition} We say $u\in S^p(\Omega)$ is a weak solution of
  $\Lap_pu=0$ if for all $\vp\in S_0^p(\Omega)$, 
  $a_0^p(u,\vp)=0$.  
\end{definition}

To state the Harnack inequality below we need a notion of a
non-negative solution.  We will say that $u\in S^p(\Omega)$ (or $S^p_0(\Omega)$) is
non-negative, and write $u\geq 0$, if there exists a Cauchy sequence
$\{u_j\}$ in $\lip(\overline{\Omega})$ converging to $u$ such that $u_j(x)\geq 0$ for
all $x$ and $j$.  Given this, we say that $u\in S^p(\Omega)$ is a
weak subsolution of $\Lap_pu\leq 0$  if $a_0^p(u,\vp)\leq 0$ for all $\vp\in
S_0^p(\Omega)$, $\vp \geq 0$.

 Clearly, our definition of non-negative implies that $u(x)\geq 0$ almost
everywhere.  Given $u \in S^p(\Omega)$ such that $u(x)\geq 0$ a.e., we
do not know {\em a priori} that $u\geq 0$ in the sense we have just
defined, and in \cite{MR847996,MR2228656} this was taken as a
hypothesis in order to prove regularity results.  However, if we impose a stronger condition on the weight
$v$, then we do get that our definition coincides with the usual
definition of non-negative.  

\begin{prop} \label{prop:positive} Given a ball $B$ and $v\in
  A_\infty(B,\rho,dx)$ suppose that $\phi\in S^p(B)$ is such that
  $\phi(x)\geq 0$ a.e. in $B$.  Then $\phi\geq 0$: that is, there exists a
  representative Cauchy sequence $\{ \psi_k\}$ of $\phi$ such that
  $\psi_k(x) \geq 0$  for all $k$.  
\end{prop}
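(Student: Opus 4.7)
The plan is to construct the non-negative representative via Mazur's lemma in the reflexive Banach space $S^p(B)$.

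Fix a Cauchy sequence $\{u_k\}\subset \lip(\overline{B})$ representing $\phi$, so that $u_k\to \phi$ in $L^p(v,B)$ and $Xu_k\to X\phi$ in $L^p_A(B)$. Set $\psi_k := \max(u_k,0)$; each $\psi_k$ is Lipschitz on $\overline{B}$ and satisfies $\psi_k(x)\geq 0$ for every $x$. Writing $X_j u = X_j(x)\cdot \grad u(x)$ and invoking the Euclidean chain rule for the composition of the Lipschitz map $t\mapsto t^+$ with $u_k$, one obtains $X\psi_k = \chi_{\{u_k>0\}}\, Xu_k$ almost everywhere. Hence $|A^{1/2}X\psi_k|\leq |A^{1/2}Xu_k|$ pointwise and $\|X\psi_k\|_A \leq \|Xu_k\|_A$. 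Moreover, since $\phi\geq 0$ a.e.\ we have $\phi=\phi^+$, and the elementary inequality $|a^+-b^+|\leq |a-b|$ yields $|\psi_k-\phi|\leq |u_k-\phi|$, so $\psi_k\to \phi$ in $L^p(v,B)$. In particular $\{\psi_k\}$ is norm-bounded in $S^p(B)$.

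Since $S^p(B)$ is reflexive, a subsequence of $\{\psi_k\}$ converges weakly in $S^p(B)$ to some $\tilde\phi$. Because strong convergence in $L^p(v,B)$ entails weak convergence there, the $L^p(v,B)$-component of $\tilde\phi$ must equal $\phi$. The uniqueness of the gradient component of a representative Cauchy sequence---established above using $w\in A_p(B,\rho,dx)$---then forces $\tilde\phi = \phi$ as an element of $S^p(B)$.

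Mazur's lemma, applied to this weakly convergent subsequence, produces finite convex combinations $\zeta_n = \sum_k \lambda_{n,k}\,\psi_k$, with $\lambda_{n,k}\geq 0$ and $\sum_k \lambda_{n,k}=1$, such that $\zeta_n\to \phi$ strongly in $S^p(B)$. Each $\zeta_n$ is a finite convex combination of non-negative Lipschitz functions, hence itself Lipschitz on $\overline{B}$ and pointwise non-negative. Consequently $\{\zeta_n\}$ is a representative Cauchy sequence for $\phi$ with $\zeta_n(x)\geq 0$ for every $x$ and every $n$, as required. The main obstacle is identifying the weak subsequential limit $\tilde\phi$ with $\phi$ itself: it is not automatic that a weakly convergent sequence in $S^p(B)$ whose $L^p(v,B)$-components converge to $\phi$ must have gradient components converging to $X\phi$, and this is exactly where the uniqueness result for representatives, and hence the $A_p$-condition on $w$ arising from $p$-admissibility, is essential.
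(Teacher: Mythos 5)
Your argument is correct, but it is genuinely different from the one in the paper. The paper keeps the original representative $\{\phi_k\}$ and replaces it by $\psi_k=\max(\phi_k,-1/k)+1/k$, then shows directly, after passing to subsequences, that $\|\psi_k-\phi_k\|\to 0$: the $L^p(v)$ part uses convergence in $v$-measure together with the hypothesis $v\in A_\infty(B,\rho,dx)$ (through Lemma~\ref{lemma-improve}) to convert $v(F_k)\to 0$ into $|E_k|\to 0$, and the gradient part is handled by an $L^1$ equi-integrability argument over the nested sets $G_k$. You instead truncate at $0$, obtain strong $L^p(v)$ convergence for free from $|a^+-b^+|\le|a-b|$ and $\phi=\phi^+$, and settle for mere boundedness of $\|X\psi_k\|_A$ via the chain rule $X\psi_k=\chi_{\{u_k>0\}}Xu_k$; reflexivity of $S^p(B)$, the uniqueness of the gradient component (this is where $w\in A_p$ enters, and you correctly flag it as the crux of identifying the weak limit with $\phi$), and Mazur's lemma then produce non-negative Lipschitz convex combinations converging strongly. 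A notable consequence is that your proof never uses the hypothesis $v\in A_\infty(B,\rho,dx)$ at all, only the standing assumptions under which $S^p(B)$ is built, so it is in this respect more general and avoids the repeated passage to subsequences and the converse-dominated-convergence machinery; what the paper's more hands-on route buys is a non-negative representative obtained as a small explicit modification of (a subsequence of) the originally given Cauchy sequence, rather than convex combinations supplied by an abstract functional-analytic lemma, and it does not rely on the reflexivity of $S^p(B)$. Since the proposition only asks for the existence of some non-negative representative, both arguments establish it.
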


\begin{proof}
Let $\{\phi_k\} \subset \lip(\overline{\Omega})$ be a representative
Cauchy sequence of $\phi$.  Define the sequence $\{\psi_k\}$ by 
\[ \psi_k(x) = \max\big(\phi_k(x),-1/k\big)+ 1/k. \]
Then for all $k$, $\psi_k(x)\geq 0$ and $\psi_k \in
\lip(\overline{\Omega})$.  It will suffice to prove that there exists
a subsequence $\{\psi_{k_j}\}$ such that
\[ \lim_{j\rightarrow \infty} \|\phi_{k_j}-\psi_{k_j}\| = 0; \]
it follows from this that $\{\psi_{k_j}\}$ is a representative Cauchy
sequence for $\phi$.  Since we will have to pass to a subsequence
several times, we will abuse notation and denote each
successive subsequence by $\{\psi_k\}$.

We first construct a subsequence such that $\|\psi_k-\phi_k\|_{L^p(v,B)}\rightarrow 0$.  Let $E_k = \{ x\in B : \phi_k(x)<-1/k\}$.  Then 
\[ \int_B |\psi_k(x)-\phi_k(x)|^p v(x)\,dx = k^{-p}v(B\setminus E_k) +
\int_{E_k} |\phi_k(x)|^p v(x)\,dx. \]
The first term clearly tends to zero as $k\rightarrow \infty$.  Since
$\phi_k\rightarrow \phi$ in $L^p(v,B)$ norm, by the converse of
the dominated convergence theorem (see~\cite{MR1817225}), there exists $\Phi\in L^1(v,B)$ such
that, after we pass to a subsequence, $\phi_k\rightarrow \phi$
pointwise a.e. and $|\phi_k|^p \leq \Phi$.
Therefore, by the dominated convergence theorem, to show that the
second term above tends to $0$, it will suffice to show that
$|E_k|\rightarrow 0$ as $k\rightarrow 0$.

Since $\phi(x)\geq 0$ a.e., we have that 
\[ E_k = \{ x\in B : \phi_k(x) < -1/k \} \subset 
\{ x\in B : |\phi_k(x)-{\phi(x)}|>1/k \} = F_k. \]
Since $\phi_k \rightarrow \phi$ in $L^p(v,B)$, it converges in
measure with respect to the measure $v\,dx$, so by again passing to a
subsequence we may assume that $v(F_k)<1/k$.  

By assumption $v\in A_\infty(B,\rho,dx)$, and so by Lemma~\ref{lemma-improve},
\[ \frac{|F_k|}{|B|} \leq C\left(\frac{v(F_k)}{v(B)}\right)^{1/p}. \]
Hence, $|F_k|\rightarrow 0$ as $k\rightarrow \infty$, and so
$|E_k|\rightarrow 0$ as well.

We will now show that there exists a subsequence such that
\[  \lim_{k\rightarrow \infty} \|X\psi_k-X\phi_k\|_A^p =\lim_{k\rightarrow \infty} \int_B \langle
AX(\psi_k-\phi_k),X(\psi_k-\phi_k)\rangle^{p/2}\,dx = 0. \]
If $f=c$ is constant, then $Xf=0$ a.e.  Hence, on $B\setminus E_k$,
$X\phi_k=X\psi_k$, and so 
by dividing the domain as
we did above, we have to show that 
\begin{equation} \label{eqn:positive1}
\lim_{k\rightarrow \infty} \int_{E_k} \langle
AX\phi_k,X\phi_k\rangle^{p/2}\,dx = 0.
\end{equation}
We will actually construct a sequence of nested sets $G_k$ such that
$E_k\subset G_k$ and show that 
\begin{equation} \label{eqn:positive2}
\lim_{k\rightarrow \infty} \int_{G_k} \langle
AX\phi_k,X\phi_k\rangle^{p/2}\,dx = 0. 
\end{equation}
Since $\langle AX\phi_k,X\phi_k\rangle =|A^{1/2}X\phi_k|^2$,
the integrand is positive and so~\eqref{eqn:positive1} follows
from~\eqref{eqn:positive2}.

Since $|E_k|\rightarrow 0$, by passing to a subsequence, we may assume
that $|E_{k+1}|\leq \frac{1}{2}|E_k|$.  Define
\[ G_k = \bigcup_{j\geq k } E_j. \]
Then $|G_k| \leq 2|E_k|$, and so $|G_k|\rightarrow 0$ as $k\rightarrow
\infty$. 

Since $\phi_k$ is Cauchy with respect to the $S^p(B)$ norm,  we know
that the sequence of functions $f_k = \langle AX\phi_k,X\phi_k\rangle^{p/2}$
is Cauchy in $L^1(B)$ and therefore converges to some function $f\in
L^1(B)$.  Fix $\epsilon>0$.  Then there exists $k_0$ such that 
\[ \|f\chi_{G_{k_0}}\|_{L^1(B)} < \epsilon/2. \]
But for fixed $k_0$, we have that $f_k\chi_{G_{k_0}} \rightarrow
f\chi_{G_{k_0}}$ in $L^1(B)$.  Hence, for all $k\geq k_0$
sufficiently large, we have that 
\[  \|f_k\chi_{G_{k}}\|_{L^1(B)} \leq  \|f_k\chi_{G_{k_0}}\|_{L^1(B)}
< \epsilon. \]
Since $\epsilon$ is arbitrary, we get \eqref{eqn:positive2} and our
proof is complete.
\end{proof}

Finally, we show that weak  solutions to $\Lap_pu=0$ exist.  When $p=2$ this is a
straightforward application of the Lax-Milgram theorem.  For all $p>1$
we need a more delicate argument.

\begin{theorem}\label{existence}

Suppose $(w,v)$ is a pair of  weights satisfying condition 
 \eqref{cond1admissible}  of Definition~\ref{defn:admissible} such that the
  global Sobolev inequality~\eqref{eqn:global-sobolev} holds. Let   $A=A(x)$
be an $m\times m$ real symmetric matrix defined in $\Omega$ that satisfies the degenerate
ellipticity condition \eqref{eqn:elliptic}.
 Then  for all $\psi\in S^p(\Omega)$, there exists $u\in S^p(\Omega)$ such
  that $u$ is a weak solution of $\Lap_pu=0$ in $\Omega$ and $u-\psi\in
  S_0^p(\Omega)$.
\end{theorem}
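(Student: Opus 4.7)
The plan is to use the direct method of the calculus of variations applied to the energy functional
\[ J(u) = \frac{1}{p}\int_\Omega \langle AXu, Xu\rangle^{p/2}\,dx = \frac{1}{p}\|Xu\|_A^p, \]
minimized over the affine class $K = \{u\in S^p(\Omega): u-\psi \in S_0^p(\Omega)\}$. The equation $\Lap_p u = 0$ in its weak form is precisely the Euler--Lagrange equation of $J$, so a minimizer will be the desired weak solution.

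First I would verify that $K$ is nonempty (it contains $\psi$), convex (as a translate of the subspace $S_0^p(\Omega)$), and closed in the norm of $S^p(\Omega)$. I would then check that $J$ is convex and norm-continuous on $S^p(\Omega)$; convexity follows because $\|X\cdot\|_A$ is a seminorm and $t\mapsto t^p$ is convex on $[0,\infty)$, while continuity follows from the reverse triangle inequality applied to $\|X\cdot\|_A$. Convexity plus continuity implies that $J$ is weakly lower semicontinuous.

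Next I would establish coercivity of $J$ on $K$. For $u\in K$, write $u = (u-\psi)+\psi$ and apply the global Sobolev inequality \eqref{eqn:global-sobolev} to $u-\psi \in S_0^p(\Omega)$:
\[ \|u-\psi\|_{L^p(v,\Omega)} \leq C\|X(u-\psi)\|_{L^p(w,\Omega)} \leq C\|X(u-\psi)\|_A, \]
where the second inequality uses the ellipticity $\langle A\xi,\xi\rangle^{p/2}\geq w|\xi|^p$. The triangle inequality then yields
\[ \|u\|_{L^p(v,\Omega)} + \|Xu\|_A \leq C\bigl(\|Xu\|_A + \|X\psi\|_A + \|\psi\|_{L^p(v,\Omega)}\bigr), \]
so $J(u)\to\infty$ as $\|u\|_{S^p(\Omega)}\to\infty$. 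Take a minimizing sequence $\{u_j\}\subset K$; by coercivity it is bounded, and by the reflexivity of $S^p(\Omega)$ established earlier in the section, a subsequence converges weakly to some $u\in S^p(\Omega)$. Since $K$ is convex and norm-closed it is weakly closed, so $u\in K$, and weak lower semicontinuity gives $J(u)=\inf_K J$.

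Finally I would derive the Euler--Lagrange equation. For any $\vp\in S_0^p(\Omega)$ and $t\in\R$, $u+t\vp\in K$, and the scalar function $g(t)=J(u+t\vp)$ attains its minimum at $t=0$. The hardest step is to show that $g$ is differentiable at $0$ with
\[ g'(0) = \int_\Omega \langle AXu,Xu\rangle^{\frac{p-2}{2}}\langle AXu,X\vp\rangle\,dx = a_0^p(u,\vp), \]
using a dominated-convergence argument built on the pointwise vector inequalities recalled in the proof of Lemma~\ref{lemma:convergence} (distinguishing $p\geq 2$ and $1<p<2$) to differentiate under the integral sign; the dominating functions are integrable because $Xu,\,X\vp\in L^p_A(\Omega)$ by H\"older's inequality. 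From $g'(0)=0$ we obtain $a_0^p(u,\vp)=0$ for every $\vp\in S_0^p(\Omega)$, which is the weak formulation of $\Lap_p u=0$, completing the proof.

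The main obstacle I anticipate is the careful justification of the differentiation of $J$ along directions in $S_0^p(\Omega)$ (and making sure the extension of $a_0^p$ to $S^p\times S_0^p$ given by Lemma~\ref{lemma:convergence} is the right object appearing in $g'(0)$), rather than the variational framework itself, which is standard once coercivity is in hand.
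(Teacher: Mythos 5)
Your proposal is correct, but it follows a genuinely different route from the paper. You minimize the energy $J(u)=\tfrac1p\|Xu\|_A^p$ over the affine class $K=\psi+S_0^p(\Omega)$ by the direct method (convexity plus norm-continuity gives weak lower semicontinuity, the global Sobolev inequality gives coercivity, reflexivity of $S^p(\Omega)$ gives a weak limit of a minimizing sequence, and weak closedness of the convex, norm-closed set $K$ keeps the limit in $K$), and then recover the weak equation as the Euler--Lagrange equation by G\^ateaux differentiation of $J$ along directions in $S_0^p(\Omega)$. The paper instead invokes the Kinderlehrer--Stampacchia existence theorem (Proposition~\ref{abstractexistence}) for the operator $\mathcal{A}u=a_0^p(u,\cdot)$ on the same convex set $U$, verifying monotonicity, coercivity and weak continuity via the vector inequalities \eqref{eqn:p>2} and \eqref{eqn:p<2} and the estimates of Lemma~\ref{lemma:convergence}, and then converts the resulting variational inequality into the equation by testing with $u\pm\varphi\in U$. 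Your approach buys a more self-contained argument: no abstract monotone-operator theorem and no weak-continuity verification, with the technical burden shifted to the one step you correctly flag, namely dominated convergence for the difference quotients of $t\mapsto|A^{1/2}X(u+t\varphi)|^p$ (with the $p\geq2$ and $1<p<2$ cases handled as in Lemma~\ref{lemma:convergence}) and the check that the resulting integral formula for $g'(0)$ agrees with the extension of $a_0^p$ from Lemma~\ref{lemma:convergence} --- which it does, since that extension is continuous on $L^p_A(\Omega)$ and given by the same integral. The paper's route avoids differentiating the functional altogether and is the natural template if one later drops the variational (gradient) structure of the operator; both routes ultimately rest on the same ingredients, reflexivity of $S^p(\Omega)$ and the global Sobolev inequality. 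Two small points worth making explicit in your write-up: closedness of $K$ in $S^p(\Omega)$ uses that $\|\cdot\|_{S^p}$ and $\|X\cdot\|_A$ are equivalent on $S_0^p(\Omega)$ precisely because of \eqref{eqn:global-sobolev}, and the weak solution you produce lies in $K$, so $u-\psi\in S_0^p(\Omega)$ as required.
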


\begin{remark}
  The global Sobolev inequality \eqref{eqn:global-sobolev} is only
  assumed in Theorem~\ref{existence} and we do not use it in the proof of
  any of our other results. For situations when it holds see discussion following
  \eqref{eqn:global-sobolev}.
\end{remark}

For the proof of Theorem~\ref{existence} we will use an abstract
result taken from Kinderlehrer and Stampacchia~\cite{MR1786735}.  Let
$V$ be a reflexive Banach space with norm $\|\cdot\|$ and dual
$V'$, and denote by $(\cdot,\cdot)$ a pairing between $V'$ and $V.$
Given a nonempty closed convex subset $U\subset V$ and a mapping
$\mathcal{A}:U\to V'$, $\mathcal{A}$ is monotone if
\[
(\mathcal{A}u_1-\mathcal{A}u_2,u_1-u_2 )\ge 0,\quad  u_1,\,u_2 \in U, 
\]
and $\mathcal{A}$ is coercive if there exists $g\in U$ such that
\[
\frac{(\mathcal{A}u_j-\mathcal{A}g,u_j-g )}{\|u_j-g\|}\to \infty
\]
 for all sequences $\{u_j\}\subset U$ such that $\|u_j\|\to\infty.$

\begin{prop}[see \cite{MR1786735},  page 87]\label{abstractexistence}
Let $U$ be a nonempty  closed convex subset of a reflexive Banach
space $V$ and let $\mathcal{A}:U\to V'$ be monotone, coercive, and
weakly continuous on $U$.  Then there exists $u\in U$ such that 
\[
\langle\mathcal{A}u, g-u\rangle\ge 0, \quad \forall g\in U.
\]   
\end{prop}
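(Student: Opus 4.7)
The plan is the classical Galerkin-plus-Minty-trick argument for variational inequalities: solve a sequence of finite-dimensional problems via Brouwer's fixed-point theorem, obtain uniform bounds from coercivity, extract a weak limit using reflexivity, and then pass to the limit in the variational inequality by combining monotonicity with the weak continuity hypothesis.

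\textbf{Galerkin setup.} First I fix the element $g_0\in U$ that witnesses coercivity. I then choose an increasing chain of finite-dimensional subspaces $V_1\subset V_2\subset\cdots\subset V$ containing $g_0$ with $\overline{\bigcup_n V_n}=V$, arranged so that $U_n:=U\cap V_n$ is nonempty, closed and convex, and so that $\bigcup_n U_n$ is dense in $U$. (This step uses that, after replacing $V$ by the closed linear span of $g_0$ together with a countable dense subset of $U$, one may assume $V$ is separable and build the $V_n$ from an affinely ordered basis drawn from $U-g_0$.)

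\textbf{Finite-dimensional step.} On each $U_n$ I seek $u_n\in U_n$ satisfying $\langle\mathcal{A}u_n,v-u_n\rangle\ge 0$ for all $v\in U_n$. Identifying $V_n$ with $\R^{d_n}$ and composing $\mathcal{A}|_{V_n}$ with the isomorphism $V_n^*\simeq(V')|_{V_n}$, the restricted map is continuous (weak continuity equals strong continuity in finite dimensions), and the existence of $u_n$ is the standard finite-dimensional variational-inequality lemma of Hartman-Stampacchia, proved by applying Brouwer's fixed-point theorem to a projection-style map on $U_n\cap\overline{B_R(g_0)}$ for $R$ large.

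\textbf{A priori bound and weak limit.} Testing the finite-dimensional inequality with $v=g_0\in U_n$ yields $\langle\mathcal{A}u_n,g_0-u_n\rangle\ge 0$, which rearranges to
\[
\frac{\langle\mathcal{A}u_n-\mathcal{A}g_0,u_n-g_0\rangle}{\|u_n-g_0\|}\le\|\mathcal{A}g_0\|_{V'}.
\]
Coercivity then forces $\{\|u_n\|\}$ to stay bounded, and reflexivity of $V$ furnishes a subsequence with $u_n\rightharpoonup u$. Since $U$ is convex and strongly closed, it is weakly closed, so $u\in U$.

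\textbf{Minty's trick.} For any $v\in\bigcup_n U_n$, say $v\in U_{n_0}$, monotonicity applied to the finite-dimensional inequality for $n\ge n_0$ gives
\[
\langle\mathcal{A}v,v-u_n\rangle\ge\langle\mathcal{A}u_n,v-u_n\rangle\ge 0.
\]
Passing to the weak limit (the functional $w\mapsto\langle\mathcal{A}v,v-w\rangle$ is weakly continuous because $\mathcal{A}v\in V'$ is fixed) yields $\langle\mathcal{A}v,v-u\rangle\ge 0$ for $v$ in the dense set $\bigcup_n U_n$, and a standard approximation extends this to all $v\in U$. Finally, for any $g\in U$ and $t\in(0,1]$, the convex combination $v_t:=u+t(g-u)$ lies in $U$, so
\[
0\le\langle\mathcal{A}v_t,v_t-u\rangle=t\,\langle\mathcal{A}v_t,g-u\rangle.
\]
Dividing by $t$ and letting $t\to 0^+$, the weak continuity of $\mathcal{A}$ along the segment from $u$ to $g$ delivers $\langle\mathcal{A}u,g-u\rangle\ge 0$, which is the desired conclusion.

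The main obstacle is the Galerkin setup in the first step: arranging the finite-dimensional spaces $V_n$ so that $\bigcup_n U_n$ is genuinely dense in $U$, since intersecting a linear subspace with a convex set can shrink things dramatically. Once that density is in hand, the Brouwer step, the coercivity bound, and the Minty passage to the limit are essentially mechanical.
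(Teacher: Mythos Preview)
The paper does not supply its own proof of this proposition: it is quoted verbatim as ``an abstract result taken from Kinderlehrer and Stampacchia'' and then applied as a black box in the proof of Theorem~\ref{existence}. So there is no paper proof to compare against, only the source reference.

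Your sketch is the standard demonstration and is essentially correct. A few remarks. First, the hypothesis ``weakly continuous'' here means demicontinuous (strong-to-weak$^*$), as you can see from how the paper verifies it; you use exactly this in both the finite-dimensional step and the Minty passage, so that is fine. Second, the separability issue you flag in the Galerkin setup is genuine: your increasing-chain construction only works once you reduce to a separable subspace. The argument in Kinderlehrer--Stampacchia avoids this entirely by working not with nested linear subspaces but with the convex hulls of \emph{arbitrary} finite subsets of $U$; solving on each such hull and invoking the finite-intersection property of the weakly closed sets $S(w)=\{u\in U:\langle\mathcal{A}w,w-u\rangle\ge 0\}$ inside a weakly compact ball (furnished by coercivity and reflexivity) yields the limit point without any density step. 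That variant is cleaner and handles the non-separable case directly, but your approach is perfectly acceptable once the separable reduction is made precise.
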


\begin{proof}[Proof of Theorem~\ref{existence}] With the notation of
  Proposition~\ref{abstractexistence} we set $V=S^p(\Omega),$
  $U=\{u\in S^p(\Omega): u-\psi\in S^p_0(\Omega)\},$ and define
  $\mathcal{A}u(\cdot)=a_0^p(u,\cdot)$ for $u\in U$.  Recall that
  $S^p(\Omega)$ is a reflexive Banach space.  Furthermore, it is clear
  that $U$ is a nonempty (since $\psi\in U$),   
  convex closed subset of $S^p(\Omega)$. We claim that 
  $\mathcal{A}u\in S^p(\Omega)'$ for all $u\in U$ (in fact for all
  $u\in S^p(\Omega)$).   First, $a_0^p(u,\cdot)$ is clearly
  linear.  Further, by using~\eqref{eqn:finite} for  functions in
  Lip$(\overline{\Omega})$ and then passing to the limit using
  Lemma~\ref{lemma:convergence}, we have that for all $u,\,g\in S^p(\Omega)$,
\[
|a_0^p(u,g)|\le \|u\|^{p-1}\|g\|.
\]

To prove that $\mathcal{A}$ is monotone, let $u_1,\,u_2\in
\lip(\overline{\Omega})$; then a straightforward computations gives
\begin{align*}
&(\mathcal{A}u_1-\mathcal{A}u_2,u_1-u_2 )=a_0^p(u_1,u_1-u_2)-a_0^p(u_2,u_1-u_2)\\
&=\int_\Omega\langle|A^{\frac{1}{2}}Xu_1|^{p-2}A^{\frac{1}{2}}Xu_1-|A^{\frac{1}{2}}Xu_2|^{p-2}A^{\frac{1}{2}}Xu_2, A^{\frac{1}{2}}Xu_1-A^{\frac{1}{2}}Xu_2\rangle\,dx.
\end{align*}
Since 
\[
\langle|\xi|^{p-2}\xi-|\eta|^{p-2}\eta,\xi-\eta\rangle\ge 0, \quad \forall \xi,\,\eta\in \R^m,
\]
it follows that
\begin{equation}\label{monotone}
(\mathcal{A}u_1-\mathcal{A}u_2,u_1-u_2 )\ge 0.
\end{equation}
By Lemma~\ref{lemma:convergence} we conclude that \eqref{monotone}
holds for all $u_1,\,u_2\in S^p(\Omega)$.

To show that $\mathcal{A}$ is weakly continuous again fix
$u_1,\,u_2,\,g\in \lip(\overline{\Omega})$.  Then 
\begin{align*}
&|(\mathcal{A}u_1,g)-(\mathcal{A}u_2,g)|=a_0^p(u_1,g)-a_0^p(u_2,g)\\
&=\int_\Omega\langle|A^{\frac{1}{2}}Xu_1|^{p-2}A^{\frac{1}{2}}Xu_1-|A^{\frac{1}{2}}Xu_2|^{p-2}A^{\frac{1}{2}}Xu_2,
A^{\frac{1}{2}}Xg\rangle\,dx. 
\end{align*}
If we repeat the argument used to estimate the term $I_2$ in
Lemma~\ref{lemma:convergence}, we get 
\begin{equation*}
|(\mathcal{A}u_1,g)-(\mathcal{A}u_2,g)|\leq C (\|u_1\|^{p-2} +\|u_2\|^{p-2})\,\|u_1-u_2\| \|g\|,\quad p\ge 2,
\end{equation*}
and 
\begin{equation*}
|(\mathcal{A}u_1,g)-(\mathcal{A}u_2,g)|\leq C \|u_1-u_2\|^{p-1} \|g\|,\quad 1<p< 2.
\end{equation*}
Therefore, passing to the limit we have that these inequalities hold
for any $u_1,\,u_2,\,g\in S^p(\Omega)$.  Hence, $\mathcal{A}$ is
weakly continuous on $S^p(\Omega)$:  if
$\{u_j\}\subset S^p(\Omega)$ is such that $u_j\to u$ then
$(\mathcal{A}u_j,g)\to (\mathcal{A}u,g)$ for all $g\in S^p(\Omega)$.

Finally, we show that $\mathcal{A}$ is coercive. 
We will prove that 
\begin{equation} \label{eqn:coercive}
\frac{(\mathcal{A}u_j-\mathcal{A}\psi,u_j-\psi )}{\|u_j-\psi\|}\to \infty
\end{equation}
for all sequences $\{u_j\}\subset U$ such that $\|u_j\|\to\infty,$
 where $\psi$ is as in the statement of the theorem.   Since the
 global Sobolev inequality holds,  we have that $\|u_j-\psi\|\approx
 \|u_j-\psi\|_A$; hence, we can use $\|u_j-\psi\|_A$ in the
 denominator.   Further, by a limiting
 argument using Lemma~\ref{lemma:convergence} it suffices to prove this
 for $u,\,\psi\in \lip(\overline{\Omega})$.  

We first consider the
 case $p\geq 2$.  
Arguing as we did before, we have that 
\begin{align*}
&(\mathcal{A}u-\mathcal{A}\psi,u-\psi )=a_0^p(u,u-\psi)-a_0^p(\psi,u-\psi)\\
&=\int_\Omega\left|\langle|A^{\frac{1}{2}}Xu|^{p-2}A^{\frac{1}{2}}Xu-|A^{\frac{1}{2}}X\psi|^{p-2}A^{\frac{1}{2}}X\psi, A^{\frac{1}{2}}Xu-A^{\frac{1}{2}}X\psi\rangle\right|\,dx.
\end{align*}
Since $p\ge 2 $, we have that
\begin{equation} \label{eqn:p>2}
\langle|\xi|^{p-2}\xi-|\eta|^{p-2}\eta,\xi-\eta\rangle\ge
2^{2-p}|\xi-\eta|^p, \quad  \forall \xi,\,\eta\in \R^m.
\end{equation}
 (See~\cite[Chapter 10]{MR2242021} for a proof of \eqref{eqn:p>2}).  Hence,
\[ 
(\mathcal{A}u-\mathcal{A}\psi,u-\psi )\ge 2^{2-p}\, \int_\Omega \left|A^{\frac{1}{2}}Xu-A^{\frac{1}{2}}X\psi\right|^p\,dx
= 2^{2-p}\, \|u-\psi\|_A^p,
\]
and \eqref{eqn:coercive} follows immediately.  

When $1<p\leq 2$ we argue as before, except that we use the inequality
\begin{equation} \label{eqn:p<2}
 \langle|\xi|^{p-2}\xi-|\eta|^{p-2}\eta,\xi-\eta\rangle \geq
|\xi-\eta|^p - |\eta|^{p-1}|\xi-\eta|, \quad \forall \xi,\,\eta\in \R^m. 
\end{equation}
(Inequality \eqref{eqn:p<2} follows in a similar manner to \eqref{eqn:p>2}.)  Combining this with H\"older's inequality, we have that 
\begin{align*}
& (\mathcal{A}u-\mathcal{A}\psi,u-\psi ) \\
& \qquad \geq \int_\Omega
\left|A^{\frac{1}{2}}Xu-A^{\frac{1}{2}}X\psi\right|^p\,dx
- \int_\Omega
\left|A^{\frac{1}{2}}X\psi|^{p-1}\right|\left|A^{\frac{1}{2}}Xu-A^{\frac{1}{2}}X\psi\right|\,dx
\\
& \qquad \geq \|u-\psi\|_A^p - \|\psi\|_A^{p-1}\|u-\psi\|_A.
\end{align*}
Again, \eqref{eqn:coercive} follows immediately.  

Therefore, by Proposition~\ref{abstractexistence} there exists $u\in U$ such that 
\[
a_0^p(u,g-u)\ge 0
\]
 for all  $g\in U.$  If $\varphi\in S_0^p({\Omega}),$ then $u+\varphi$ and $u-\varphi$ belong to $U$ and therefore
\[
a_0^p(u,\varphi)\ge 0\quad \text{ and }\quad -a_0^p(u,\varphi)\ge 0,
\] 
from which it follows that $a_0^p(u,\varphi)= 0.$
Therefore, we have shown that $u$ is a weak solution of $\Lap_pu=0$ such
that $u-\psi\in S_0^p(\Omega)$.

%
%
%
%
%
%
%
%

\end{proof}

\subsection{Harnack inequality}
\label{subsec:harnack}
Central to the proofs of our main results are a mean value inequality and a Harnack
inequality for weak solutions of the equation $\Lap_p u=0$.  As
always, $A=A(x)$ is a real,  $m\times m$ symmetric
matrix defined in $\Omega$ that satisfies the degenerate ellipticity
condition \eqref{eqn:elliptic}.

\begin{lemma} \label{lemma:mvt}
Given $p$, $1<p<\infty$, a pair of $p$-admissible
weights $(w,v)$ in $\Omega$, and a compact set $K\subset \Omega$, let $r_0>0$ be
as in Theorem~\ref{thm:wtd-poincare}.  Then given any ball $B$ with
center in $K$ and $r(B)<r_0$, suppose $u\in S^p(B)$ is a weak
subsolution of $\Lap_p u \leq 0$.   Then there exist constants $c$ and
$d$, depending only on the weights, such that, for any $\alpha$,
$1/2\leq \alpha <1$, 
\[ \big( \esssup_{x\in \alpha B} u^+(x) \big)^p \leq
\frac{c}{(1-\alpha)^d} \mu_p^{\frac{p\sigma}{\sigma-1}} \frac{1}{v(B)}
  \int_B u^+(x)^p v(x)\,dx, \]
where $\sigma = q/p>1$ with $q$ as in  condition (2) of
Definition~\ref{defn:admissible}, 
$u^+=\max(u,0)$, and 
\[ \mu_p = \mu_p(B) = \left(\frac{v(B)}{w(B)}\right)^{1/p}. \]
\end{lemma}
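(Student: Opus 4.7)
The plan is a Moser iteration adapted to the two-weight degenerate setting, broken into three steps: a Caccioppoli energy estimate, a Sobolev-type reverse Hölder improvement, and a geometric iteration on nested balls. Throughout I write $\mu_p = (v(B)/w(B))^{1/p}$ and work with $u^+$, never needing to establish separately that $u^+$ is a subsolution since the test functions below vanish where $u \le 0$.

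For the Caccioppoli step, fix a Lipschitz cutoff $\eta$ supported in $B$ with $\eta \equiv 1$ on a smaller concentric ball, choose $\beta \geq 0$, and test the subsolution inequality $a_0^p(u,\varphi) \leq 0$ against $\varphi = \eta^p (u^+ \wedge M)^{\beta+1}$, where $M$ is a truncation level. Expanding $X\varphi$ by the product and chain rules, applying the Cauchy--Schwarz-type bound $|\langle A\xi, \zeta\rangle| \leq \langle A\xi,\xi\rangle^{1/2}\langle A\zeta,\zeta\rangle^{1/2}$, absorbing the resulting cross term by Young's inequality, and using the ellipticity \eqref{eqn:elliptic} to pass from the $A$-norm to $w$- and $v$-weighted integrals, then sending $M \to \infty$ by monotone convergence, I arrive at
\begin{equation*}
\int_B \eta^p (u^+)^{\beta} \,|Xu^+|^p\, w\,dx \;\leq\; C(\beta+1)^p \int_B (u^+)^{\beta+p}\,|X\eta|^p\,v\,dx.
\end{equation*}
Setting $\gamma = (\beta+p)/p \geq 1$ and $U = (u^+)^\gamma$, the chain rule gives $|XU|^p = \gamma^p (u^+)^\beta |Xu^+|^p$, so this simplifies to $\int \eta^p |XU|^p w\,dx \leq C\gamma^p \int U^p |X\eta|^p v\,dx$.

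Next I combine with Sobolev. Pick concentric balls $\alpha B \subset \alpha' B \subset B$ with $\alpha < \alpha' \leq 1$ and take $\eta \in \lip_0(\alpha' B)$ with $\eta \equiv 1$ on $\alpha B$ and $|X\eta| \leq C/((\alpha'-\alpha)r(B))$. Applying Corollary~\ref{coro:sobolev} to $\eta U$, using $|X(\eta U)| \leq \eta|XU| + U|X\eta|$, substituting the Caccioppoli bound and using $w \leq v$ to collect terms, I get
\begin{equation*}
\left(\frac{1}{v(B)}\int_{\alpha B} (u^+)^{p\sigma\gamma} v\,dx\right)^{\!1/(p\sigma\gamma)} \;\leq\; \left(\frac{C\gamma\,\mu_p}{\alpha'-\alpha}\right)^{\!1/\gamma} \left(\frac{1}{v(B)}\int_{\alpha' B} (u^+)^{p\gamma} v\,dx\right)^{\!1/(p\gamma)}.
\end{equation*}
Iterating with $\gamma_k = \sigma^k$ and $\alpha_k = \alpha + (1-\alpha)2^{-k}$ (so $\alpha_0 = 1$ and $\alpha_k \downarrow \alpha$), the successive $\mu_p$ factors accumulate to $\mu_p^{\sum_{k\geq 0} 1/\gamma_k} = \mu_p^{\sigma/(\sigma-1)}$; the geometric factors $(\alpha_k - \alpha_{k+1})^{-1/\gamma_k}$ telescope to a constant times $(1-\alpha)^{-\sigma/(\sigma-1)}$; and the factors $\gamma_k^{1/\gamma_k}$ multiply to a finite constant because $\sum k\sigma^{-k} < \infty$. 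Letting $k \to \infty$ recovers $\esssup_{\alpha B} u^+$ on the left, and raising both sides to the $p$-th power yields the claimed inequality with $d = p\sigma/(\sigma-1)$.

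The main obstacle is the test-function step, since elements of $S^p(B)$ are defined only as equivalence classes of Lipschitz Cauchy sequences, so the nonlinear composition $(u^+ \wedge M)^{\beta+1}$ and its chain rule must be justified within that framework. I would handle this by approximating $u$ by its defining Lipschitz sequence $\{u_k\}$, forming $\varphi_k = \eta^p (u_k^+ \wedge M)^{\beta+1}$ (Lipschitz with compact support in $B$), computing $X\varphi_k$ classically, plugging into $a_0^p(u,\varphi_k) \leq 0$, and passing to the limit in $k$ via Lemma~\ref{lemma:convergence}; truncation at height $M$ guarantees uniform integrability at each stage, and the $M \to \infty$ limit is monotone. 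The remainder of the proof is bookkeeping, the subtle point being to balance the $\gamma_k^{1/\gamma_k}$ and $(\alpha_k-\alpha_{k+1})^{-1/\gamma_k}$ factors so that the constant depends on $\alpha$ only through the stated power $(1-\alpha)^{-d}$.
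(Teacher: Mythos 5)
Your argument is correct and is essentially the proof the paper relies on: the paper does not prove Lemma~\ref{lemma:mvt} directly but cites Ferrari \cite{MR2228656} (with Chanillo--Wheeden \cite{MR847996} for the passage from nonnegative solutions to $u^+$), and that proof is exactly the Moser iteration you describe --- a Caccioppoli estimate from testing with $\eta^p(u^+\wedge M)^{\beta+1}$, the two-weight Sobolev inequality of Corollary~\ref{coro:sobolev}, and iteration on shrinking concentric balls, with $\mu_p^{p\sigma/(\sigma-1)}$ and $(1-\alpha)^{-d}$ coming from the accumulated factors just as you compute. The one step requiring the care you already flag is the justification of the nonlinear truncated test function within the Cauchy-sequence definition of $S^p_0(B)$ (Cauchyness of $\varphi_k=\eta^p(u_k^+\wedge M)^{\beta+1}$, the chain rule for the abstract gradient, and the behavior on the level sets $\{u=0\}$, $\{u=M\}$, notably when $\beta=0$), which is precisely the technical content supplied by the cited references.
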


\begin{remark}
We may restate Lemma~\ref{lemma:mvt} as follows:  given any
$x\in \Omega$, then for all balls centered at $x$ whose radius is
sufficiently small, the mean value inequality holds.
\end{remark}

Lemma~\ref{lemma:mvt} was proved in~\cite{MR2228656} for non-negative
weak subsolutions, with $u^+$ replaced by $u$.   However, the proof
can be readily adapted:  for details on the minor
changes required, see~\cite{MR847996} where the argument is done for
the case $p=2$; the same changes apply for arbitrary $p>1$.    As a
consequence, we get the following result. 

\begin{theorem} \label{thm:strong-mvt}
With the same hypotheses as Lemma~\ref{lemma:mvt}, suppose that $u\in S^p(B)$ is a weak
solution of $\Lap_p u = 0$.  Then
\[  \esssup_{x\in \alpha B} |u(x)|^p \leq
\frac{c}{(1-\alpha)^d} \mu_p^{\frac{p\sigma}{\sigma-1}} \frac{1}{v(B)}
  \int_B |u(x)|^p v(x)\,dx. \]
\end{theorem}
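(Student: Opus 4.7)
The plan is to reduce Theorem \ref{thm:strong-mvt} to Lemma \ref{lemma:mvt} by exploiting the fact that $\Lap_p$ is odd in $u$. Observe that for $u,\varphi \in \lip(\overline{\Omega})$ a direct computation gives
\begin{align*}
a_0^p(-u,\varphi)
&= \int_\Omega \langle AX(-u),X(-u)\rangle^{\frac{p-2}{2}}\langle AX(-u),X\varphi\rangle\,dx \\
&= -\int_\Omega \langle AXu,Xu\rangle^{\frac{p-2}{2}}\langle AXu,X\varphi\rangle\,dx = -a_0^p(u,\varphi),
\end{align*}
and by Lemma \ref{lemma:convergence} this identity extends to all $u \in S^p(B)$ and $\varphi \in S_0^p(B)$: if $\{u_j\} \subset \lip(\overline{B})$ represents $u$, then $\{-u_j\}$ is Cauchy with the same norm and represents what we identify with $-u \in S^p(B)$. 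Consequently, if $u$ is a weak solution of $\Lap_p u = 0$, then for every nonnegative $\varphi \in S_0^p(B)$ one has $a_0^p(\pm u,\varphi) = \pm a_0^p(u,\varphi) = 0 \leq 0$, so both $u$ and $-u$ are weak subsolutions of $\Lap_p \leq 0$.

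Next I apply Lemma \ref{lemma:mvt} separately to $u$ and to $-u$. This yields
\begin{align*}
\esssup_{x\in \alpha B}(u^+(x))^p &\leq \frac{c}{(1-\alpha)^d}\mu_p^{\frac{p\sigma}{\sigma-1}}\frac{1}{v(B)}\int_B (u^+)^p\,v\,dx, \\
\esssup_{x\in \alpha B}(u^-(x))^p &\leq \frac{c}{(1-\alpha)^d}\mu_p^{\frac{p\sigma}{\sigma-1}}\frac{1}{v(B)}\int_B (u^-)^p\,v\,dx,
\end{align*}
where $u^- := (-u)^+$. Since $u^+$ and $u^-$ have disjoint supports, $|u|^p = (u^+)^p + (u^-)^p$ pointwise and hence $\int_B (u^+)^p v\,dx + \int_B (u^-)^p v\,dx = \int_B |u|^p v\,dx$, while $\esssup_{\alpha B}|u|^p \leq \esssup_{\alpha B}(u^+)^p + \esssup_{\alpha B}(u^-)^p$. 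Adding the two inequalities above gives the conclusion of Theorem \ref{thm:strong-mvt}, with the constant $c$ from Lemma \ref{lemma:mvt} replaced by $2c$.

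There is no substantial obstacle here: the argument rests entirely on the sign-reversal symmetry of $\Lap_p$ and the linearity of $a_0^p$ in its second argument. The only point requiring care is the verification that $-u \in S^p(B)$ and that the identity $a_0^p(-u,\varphi) = -a_0^p(u,\varphi)$ continues to hold after the limit passage used to define $a_0^p$ on all of $S^p(B) \times S_0^p(B)$; both of these follow immediately by applying Lemma \ref{lemma:convergence} to the representative sequence $\{-u_j\}$.
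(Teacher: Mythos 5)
Your argument is correct and is essentially the paper's own proof: there, too, one observes that a weak solution $u$ makes both $u$ and $-u$ weak subsolutions, applies Lemma~\ref{lemma:mvt} to each, and combines via $|u|=\max(u^+,(-u)^+)$ (the paper takes the maximum, so the constant $c$ is not even doubled, but this is immaterial). Your extra verification of the oddness of $a_0^p$ and the limit passage is a fine, if routine, elaboration of the same route.
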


\begin{proof}  Since $u$ is a weak solution, then $u$ and
$-u$ are both weak subsolutions, so the conclusion of
Lemma~\ref{lemma:mvt} holds for both of these functions.  Since
\[ |u| = \max(u^+, (-u)^+), \]
the desired inequality follows at once. 
\end{proof}

Our final result is a Harnack inequality, which is also proved
in~\cite{MR2228656}.

\begin{theorem} \label{thm:harnack}
Given $p$, $1<p<\infty$, a pair of $p$-admissible
weights $(w,v)$ in $\Omega$, and a compact set $K\subset \Omega$, let $r_0>0$ be
as in Theorem~\ref{thm:wtd-poincare}.  Given any ball $B$ with
center in $K$ and $r(B)<r_0/2$, suppose $u$ is a non-negative weak
solution of $\Lap_pu=0$ in $S^p(2B)$.   Then
$$\esssup_B u \leq e^{C_K\mu_p(B)} \essinf_B u$$
with $\mu_p(B)=(v(B)/w(B))^{1/p}$ and $C_K$ independent of $u$ and $B$.
\end{theorem}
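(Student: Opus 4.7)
The plan is to follow the two-sided Moser iteration scheme, refined by Chanillo--Wheeden and Ferrari to the two-weight setting. Three ingredients feed into a Bombieri--Giusti type combination: the mean value estimate of Theorem~\ref{thm:strong-mvt}, which bounds $\esssup_{\alpha B} u$ by an $L^p(v)$ average with the factor $\mu_p^{p\sigma/(\sigma-1)}$; an analogous lower bound for $\essinf_{\alpha B} u$ expressed via an $L^p(v)$ average of $u^{-1}$; and a $\BMO$-type bound on $\log u$ that bridges the positive and negative exponent ranges.

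For the lower bound, since $u\ge 0$ is an element of $S^p(2B)$, I work with the positive perturbation $u+\delta$, $\delta>0$, and test the equation $a_0^p(u,\varphi)=0$ against $\varphi = \eta^p(u+\delta)^{-\beta}$ for $\eta\in\lip_0(2B)$ and $\beta>0$, $\beta\neq p-1$. Expanding $X\varphi$ and using the ellipticity of $A$ to absorb the cross terms yields a Caccioppoli inequality for $(u+\delta)^{(p-1-\beta)/p}$; feeding this into the weighted Sobolev inequality of Corollary~\ref{coro:sobolev} and iterating along a geometric sequence of negative exponents produces, after letting $\delta\to 0$ by Fatou,
\begin{equation*}
\Big(\essinf_{\alpha B} u\Big)^{-p} \leq \frac{C}{(1-\alpha)^d}\,\mu_p^{\frac{p\sigma}{\sigma-1}}\,\frac{1}{v(B)}\int_B u^{-p}v\,dx.
\end{equation*}

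For the logarithmic bound, the critical exponent $\beta=p-1$ is excluded from the iteration above; testing directly with $\varphi=\eta^p(u+\delta)^{-(p-1)}$ gives instead
\begin{equation*}
\int_{2B}|X\log(u+\delta)|^p\eta^p w\,dx \leq C\int_{2B}|X\eta|^p w\,dx.
\end{equation*}
Choosing $\eta$ as a standard cutoff for $B$ with $|X\eta|\leq C/r(B)$, applying Theorem~\ref{thm:wtd-poincare} to $\log(u+\delta)$, and letting $\delta\to 0$ yields a weighted oscillation bound for $\log u$ on $B$ with constant of order $\mu_p(B)$; chaining this over a family of subballs gives a full $\BMO$ bound $\|\log u\|_{\BMO}\leq C\mu_p(B)$ in the space of homogeneous type $(B,\rho,dx)$.

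The three ingredients are then combined by the Bombieri--Giusti lemma adapted to spaces of homogeneous type: its hypotheses are exactly the two mean value inequalities for $u^p$ and $u^{-p}$ together with exponential integrability of $\log u$, the latter supplied by John--Nirenberg applied to the $\BMO$ bound above. The exponential factor $e^{C_K\mu_p(B)}$ arises precisely from the standard trade-off $\int_B e^{\lambda(\log u - c_B)}v\,dx \leq Cv(B)$ valid for $\lambda\sim\mu_p(B)^{-1}$, which makes the $L^p(v)$ average of $u^p$ comparable to that of $u^{-p}$ up to a multiplicative factor exponential in $\mu_p(B)$. The main technical obstacle is justifying the test functions $\eta^p(u+\delta)^{-\beta}$ as admissible elements of $S_0^p(2B)$: one must approximate $u$ by Lipschitz functions as in Lemma~\ref{lemma:convergence}, truncate above and below to keep all quantities bounded, and only then pass to the limit, with Proposition~\ref{prop:positive} ensuring the positivity needed to send $\delta\to 0$ cleanly at the end.
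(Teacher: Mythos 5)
The paper does not actually prove Theorem~\ref{thm:harnack}: it quotes it from Ferrari~\cite{MR2228656} (with \cite{MR847996} for the case $p=2$), and your outline --- upward Moser iteration as in Lemma~\ref{lemma:mvt}/Theorem~\ref{thm:strong-mvt}, downward iteration on negative powers of $u+\delta$, a logarithmic estimate at the critical exponent $\beta=p-1$, John--Nirenberg, and a Bombieri--Giusti crossover, with Lipschitz approximation and Proposition~\ref{prop:positive} used to legitimize the test functions --- is exactly the route of that cited proof, so in substance you are reconstructing it rather than offering a different argument.

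One step is wrong as written, and it is precisely the step where the two-weight structure enters. Testing with $\varphi=\eta^p(u+\delta)^{1-p}$ and using only the ellipticity \eqref{eqn:elliptic} gives $\int_{2B}\eta^p|X\log(u+\delta)|^p w\,dx\le C\int_{2B}\langle AX\eta,X\eta\rangle^{p/2}dx\le C\int_{2B}|X\eta|^p v\,dx$: the right-hand side must carry the \emph{upper} weight $v$, not $w$ as in your display. It is this $v$ that, after the cutoff choice and Theorem~\ref{thm:wtd-poincare}, produces an oscillation bound for $\log u$ of size $(v(2B)/w(B))^{1/p}\sim\mu_p(B)$, hence a John--Nirenberg exponent $\lambda\sim\mu_p(B)^{-1}$ and the factor $e^{C_K\mu_p(B)}$. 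If your displayed inequality (with $w$ on the right) were available, the $\BMO$ norm of $\log u$ would be bounded by a structural constant, the crossover would run at a fixed exponent, and the Harnack constant would come out only polynomial in $\mu_p(B)$ --- a conclusion that is false in this generality, as the discontinuous solutions in Section~\ref{sharp} and in \cite{MR847996} show. Since your subsequent assertions (oscillation of order $\mu_p(B)$, $\lambda\sim\mu_p(B)^{-1}$) are the ones that follow from the corrected inequality, this appears to be a slip rather than a conceptual gap, but it must be fixed, because the entire dependence $e^{C_K\mu_p(B)}$ hinges on it.
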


\begin{remark}
As with the previous two results, we may rephrase
Theorem~\ref{thm:harnack} as saying that given $x\in B$, the Harnack
inequality holds for all balls $B$ centered at $x$ with radius
sufficiently small.  
\end{remark}

\begin{remark}
In~\cite{MR2228656}, Theorem~\ref{thm:harnack} is proved for solutions $u$
that are non-negative in the sense that there exists a sequence
$\{u_j\}$ of non-negative Lipschitz functions converging to $u$.
By Proposition~\ref{prop:positive}, when we apply this result below it
will suffice to assume that $u$ is a non-negative in the sense that
$u(x)\geq 0$ a.e.
\end{remark}

\section{Proof of main results}
\label{proofofmain}

In this section we state and prove our main results.  
 Throughout, $\Omega\subset \R^n$
  is open, bounded and connected, or $\Omega=\R^n$; 
  $X=(X_1,\ldots,X_m)$ will be a family of $C^\infty$ vector fields
  defined in $\R^n$ and satisfying H\"ormander's condition on every
  bounded subset of $\R^n$ if $\Omega$ is bounded, or generating a Carnot group if $\Omega=\R^n$;
  $\rho$ is the corresponding Carnot-Carath\'eodory metric in $\R^n$;
  and $Q$ is the homogeneous dimension of $\Omega$ with
  respect to $X$.

Our first result is a generalization of Theorem A in the
Introduction.  

\begin{theorem} \label{thm:easy-thm}
Given $\Omega$, $X$, and $p$, $1<p<\infty$, let $A$ be an
$m\times m$ matrix of measurable functions defined in $\Omega$ that satisfies the
ellipticity condition
\begin{equation} \label{eqn:easy1}
 \lambda w(x)^{2/p}|\xi|^2 \leq \langle A(x)\xi,\xi \rangle \leq \Lambda
 w(x)^{2/p}|\xi|^2, \qquad \xi\in \R^m,\,x\in \Omega,
\end{equation}
where $w\in L^1_{\mathrm{loc}}(\R^n)$,  $w\in A_p(\Omega,\rho,dx)$ and $0<\lambda<\Lambda<\infty$. 
Then every weak solution $u$ of the equation
\begin{equation} \label{eqn:easy2}
\Lap_pu=- \Div_X(\langle AXu,Xu\rangle^{\frac{p-2}{2}} AXu) = 0
\end{equation}
is   H\"older continuous on compact
subsets of $\Omega.$
\end{theorem}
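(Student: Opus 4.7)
The key simplification under hypothesis \eqref{eqn:easy1} is that, up to the fixed constants $\lambda,\Lambda$, the pair of weights is $(w,v)=(w,\Lambda^{p/2}w)$, so $v/w$ is a constant. The plan is to verify that this pair is $p$-admissible, then invoke the Harnack inequality of Theorem~\ref{thm:harnack} — whose constant becomes universal in this setting — and finally run the classical Moser oscillation-reduction argument to upgrade Harnack to local H\"older continuity.

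First, I would check the two conditions of Definition~\ref{defn:admissible} for $(w,\Lambda^{p/2}w)$. Condition~\eqref{cond1admissible} is immediate: $w\in A_p(\Omega,\rho,dx)$ is locally doubling by Lemma~\ref{lemma-improve} and Remark~\ref{remark:localprop}, and so is $cw$. For condition~\eqref{cond2admissible}, the inequality I need reduces to $r(B_1)/r(B_2) \leq C\,(w(B_1)/w(B_2))^{1/p-1/q}$ for some $q>p$. Using Lemma~\ref{lemma:hormander-nocenter} to get $|B_1|/|B_2| \geq d_K (r_1/r_2)^Q$ on a fixed compact set and small radii, together with Lemma~\ref{lemma-improve}\eqref{lemma-improve-b} which gives $|B_1|/|B_2| \leq C (w(B_1)/w(B_2))^{1/p}$, one obtains $(r_1/r_2)^{Qp} \leq C\,w(B_1)/w(B_2)$; taking $q$ with $1/p-1/q$ small enough (explicitly, $q\leq Qp/(Q-1)$ is enough) yields the required estimate. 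Consequently the Harnack inequality in Theorem~\ref{thm:harnack} applies, and now $\mu_p(B)=(v(B)/w(B))^{1/p}=\Lambda^{1/2}$ is a \emph{universal} constant, so on any ball centered at a point of a fixed compact $K\subset\Omega$ of sufficiently small radius we have $\esssup_B u \leq C_0\,\essinf_B u$ with $C_0=e^{C_K\Lambda^{1/2}}$ independent of $u$ and $B$.

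Next I would implement the standard oscillation-reduction step. Fix a ball $B=B(x_0,r)\subset\Omega$ with $r<r_0/2$ and $x_0\in K$, and set $M(r)=\esssup_B u$, $m(r)=\essinf_B u$, $\omega(r)=M(r)-m(r)$; these are finite thanks to Theorem~\ref{thm:strong-mvt}. The operator $\Lap_p$ depends on $u$ only through $Xu$ and is odd in $Xu$, so for any constant $c$ both $c-u$ and $u-c$ are weak solutions of $\Lap_p(\cdot)=0$; in particular $M(2r)-u$ and $u-m(2r)$ are non-negative weak solutions on $B(x_0,2r)$ (non-negativity in the a.e.\ sense is upgraded to the sequential sense of Section~\ref{sec:weaksolutions} via Proposition~\ref{prop:positive}, since $v=cw\in A_\infty$). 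Applying Harnack on the ball of radius $r$ to each of these and adding yields
\[
\omega(r)+\omega(2r)\leq C_0\bigl(\omega(2r)-\omega(r)\bigr),
\]
hence $\omega(r)\leq \gamma\,\omega(2r)$ with $\gamma=(C_0-1)/(C_0+1)<1$. Iterating on dyadic scales gives $\omega(2^{-j}r_0)\leq \gamma^{\,j}\omega(r_0)$, which by the standard interpolation between scales translates into $\omega(r)\leq C(r/r_0)^{\alpha}\omega(r_0)$ with $\alpha=-\log_2\gamma>0$, uniformly for $x_0$ in a given compact subset of $\Omega$. This is the desired H\"older continuity on compact subsets.

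The main obstacle in practice is the technical bookkeeping: one has to check that the constants in Theorem~\ref{thm:wtd-poincare}, Theorem~\ref{thm:strong-mvt} and Theorem~\ref{thm:harnack} are uniform on the compact set $K$ (this is the content of Remark~\ref{remark:constants}), verify condition~\eqref{cond2admissible} with a uniform $q>p$ and $r_0>0$ over $K$, and justify that $c\pm u\in S^p(2B)$ is a weak solution (using that $Xc=0$ and the odd symmetry of $\Lap_p$ in $Xu$) so that Harnack truly applies to $M(2r)-u$ and $u-m(2r)$. Once these uniformities are in place, the oscillation iteration is routine and yields H\"older continuity with a uniform exponent on any compact subset of $\Omega$.
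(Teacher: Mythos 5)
Your proposal is correct and follows essentially the same route as the paper: show that $(\lambda^{p/2}w,\Lambda^{p/2}w)$ is a $p$-admissible pair using Lemma~\ref{lemma:hormander-nocenter} and Lemma~\ref{lemma-improve}, then apply Theorems~\ref{thm:strong-mvt} and~\ref{thm:harnack} (with $\mu_p$ now a universal constant) and conclude with the classical Harnack-to-H\"older oscillation argument, which the paper simply cites from Gilbarg--Trudinger, Chapter~8, and you write out. The only cosmetic differences are that you choose $q\in\bigl(p,\,Qp/(Q-1)\bigr]$ directly from the $A_p$ condition rather than passing to $A_{p-\epsilon}$ as the paper does, and you should take the pair $(\lambda^{p/2}w,\Lambda^{p/2}w)$ rather than $(w,\Lambda^{p/2}w)$ so that the normalization \eqref{eqn:elliptic} holds exactly; neither point affects the argument.
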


\begin{remark}
Recall that by Remark~\ref{rem:restriction} if $\Omega$ satisfies
certain boundary conditions, then we can assume $w\in
A_p(\R^n,\rho,dx)$.  Conversely, by Wolff's extension theorem, in
certain cases,  $w\in A_p(\Omega,\rho,dx)$ can be extended to an $A_p$
weight on $\R^n$.  See~\cite{garcia-cuerva-rubiodefrancia85} for more
details. 
\end{remark}

Our
next two  results generalize Theorem B from the Introduction.  We have  
two results depending on whether we are working in a general
Carnot-Carath\'eodory space or if we are in the special case of a Carnot
group.    To state both results, we need two  definitions.

First, we defined the restricted maximal operator $M^*$ by
\[ M^*f(x) =\sup_{0<r<1}
\dashint_{B(x,r)\cap \Omega^*} |f(y)|\,dy. \]
If $\Omega=\R^n$, we take $\Omega^*=\R^n$; if $\Omega$ is bounded,
then we take $\Omega^*=B^*$ to be a ball containing $\Omega$ such that
if $x\in \Omega$, $B(x,1)\subset B^*$.  In either case $(B^*,\rho,dx)$ is a space
of homogeneous type, and so if $f$ is locally integrable,
$M^*f(x)<\infty$ almost everywhere.

\begin{definition}
Given a function $u\in L^1_{loc}$, we say that $u$ has an approximate limit at a point $x$ if there exists $z\in \R$ such that 
\[ \lim_{r\rightarrow 0} \dashint_{B(x,r)} |u(y)-z|\,dy = 0.  \]
Note that by the Lebesgue differentiation theorem, this limit exists
almost everywhere and we have $z=u(x)$.  At each point $x$ where $u$
has an approximate limit, define the function $\tilde{u}(x)=z$.  The
function $\tilde{u}$ is called the precise representative of $u$.
\end{definition}

\begin{theorem} \label{thm:main-CCS}
Given $\Omega$, $X$ and $p$, $Q-n<p<\infty$, let $A$ be an
$m\times m$ matrix of measurable functions defined in $\Omega$ that satisfies the
ellipticity condition
\begin{equation} \label{eqn:CCS1}
 k(x)^{-2/p'}|\xi|^2 \leq \langle A(x)\xi,\xi \rangle \leq 
 k(x)^{2/p}|\xi|^2, \qquad \forall \xi\in \R^m,\,\text{a.e. } x\in \Omega,
\end{equation}
where $k, \, k^{-p'/p} \in L^1_{loc}(\R^n)$ and 
\[ k\in A_{p'}(\Omega,\rho,dx)\cap RH_\tau (\Omega,\rho,dx), \qquad \tau = 1+\frac{p(Q-1)}{n+p-Q}. \]
Then every weak solution
of~\eqref{eqn:easy2} is continuous almost everywhere.  More precisely,
$\tilde{u}$, the precise representative of $u$, 
is continuous at every point of the set 
\[ S(k) = \{ x\in \Omega : M^*k(x) < \infty \}. \]
\end{theorem}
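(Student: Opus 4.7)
My plan is to reduce the statement to the machinery of Section \ref{section:PDE} by setting $w = k^{1-p}$ and $v = k$; then the hypothesis \eqref{eqn:CCS1} becomes exactly the degenerate ellipticity condition \eqref{eqn:elliptic} for the pair $(w,v)$. The first step will be to check that $(w,v)$ is $p$-admissible in the sense of Definition \ref{defn:admissible}. Condition (1) comes for free: the $A_{p'}$--$A_p$ duality in Lemma \ref{prop-Apfacts} gives $w \in A_p(\Omega,\rho,dx)$, and Lemma \ref{lemma-improve} shows $v = k$ is locally doubling. The balance condition (2) is the place where the reverse H\"older hypothesis with $\tau = 1 + p(Q-1)/(n+p-Q)$ has to be used; I would combine the $RH_\tau$ decay of $k$ on sub-balls, the $A_p$ reverse inequality for $w$, and the non-concentric ball comparison in Lemma \ref{lemma:hormander-nocenter}, whose exponents are the homogeneous dimension $Q$ and the topological dimension $n$. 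The explicit value of $\tau$ is chosen exactly so that the resulting balance exponent $q$ strictly exceeds $p$.

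The key observation that drives everything else is the pointwise bound $\mu_p(B) \leq \dashint_B k\,dx$, which I will prove by a single application of H\"older's inequality with exponents $p'$ and $p$:
\[
|B| = \int_B k^{1/p'}\cdot k^{-1/p'}\,dx \leq k(B)^{1/p'}\, k^{1-p}(B)^{1/p},
\]
so that
\[
\mu_p(B)^p = \frac{v(B)}{w(B)} = \frac{k(B)}{k^{1-p}(B)} \leq \left(\frac{k(B)}{|B|}\right)^p.
\]
Consequently, for any $x_0 \in B$, $\mu_p(B) \leq Mk(x_0)$. This means that on the set $S(k)$ the Harnack factor $e^{C_K \mu_p(B)}$ from Theorem \ref{thm:harnack} is uniformly bounded as $B$ shrinks to a point.

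With these pieces in place the rest is a standard Moser oscillation iteration. Fix $x_0 \in S(k)$, write $M_0 = Mk(x_0)$, and let $r_0$ be the radius threshold from Theorem \ref{thm:harnack} associated to a compact neighborhood of $x_0$. For $R < r_0$ set $M(R) = \esssup_{B(x_0,R)} u$ and $m(R) = \essinf_{B(x_0,R)} u$; both are finite by Theorem \ref{thm:strong-mvt}. The translated functions $M(R) - u$ and $u - m(R)$ are non-negative weak solutions of $\Lap_p = 0$ on $B(x_0,R)$, and Proposition \ref{prop:positive} (which applies since $v = k \in A_\infty$) lets us pass from almost-everywhere non-negativity to the sequential form required by Theorem \ref{thm:harnack}. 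Applying Harnack on the concentric pair $B(x_0, R/2) \subset B(x_0, R)$ to each translated function and adding the two inequalities yields, with $\omega(R) = M(R) - m(R)$,
\[
\omega(R/2) \leq \theta\, \omega(R), \qquad \theta := \tanh(C M_0/2) < 1.
\]
Iteration gives $\omega(2^{-k} r_0) \leq \theta^k \omega(r_0) \to 0$. This vanishing oscillation forces the averages of $u$ over $B(x_0, r)$ to converge as $r \to 0$, so $\tilde u(x_0)$ exists, and the uniform estimate $|u(y) - \tilde u(x_0)| \leq \omega(r)$ for $y \in B(x_0, r)$ gives continuity of $\tilde u$ at $x_0$. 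Since $k \in L^1_{loc}$, the set $S(k)$ has full measure and continuity almost everywhere follows.

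The main obstacle I anticipate is the verification of the two-weight balance in condition (2) of admissibility: here the precise numerology of $\tau$, $Q$, and $n$ must conspire to produce an exponent $q > p$, and the argument requires carefully chaining the $RH_\tau$ property of $k$ with the $A_p$ reverse decay for $w = k^{1-p}$ while keeping track of the mismatched volume exponents in Lemma \ref{lemma:hormander-nocenter}. Everything downstream is a smooth application of the machinery of \S\ref{subsec:harnack} combined with the elementary but decisive bound $\mu_p \leq Mk$.
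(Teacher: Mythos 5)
Your overall architecture is the same as the paper's: reduce to the pair $(w,v)=(k^{1-p},k)$, verify $p$-admissibility, prove the pointwise bound $\mu_p(B)\le \dashint_B k\,dx\le Mk(x)$ by H\"older, run the Harnack oscillation iteration with contraction factor $\frac{e^{CMk(x)}-1}{e^{CMk(x)}+1}$, and then show the precise representative is continuous on $S(k)$ (your last step is stated a bit loosely — the bound $|u(y)-\tilde u(x_0)|\le \omega(r)$ only holds for a.e.\ $y$, and one must pass to $\tilde u(y)$ via averages over small balls around $y$, as the paper does — but that is a minor point).

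The genuine gap is in the step you defer: the verification of condition \eqref{cond2admissible} of Definition~\ref{defn:admissible}. The ingredients you list — $RH_\tau$ decay of $k$ on sub-balls (Lemma~\ref{lemma-improve}(a)), the ``$A_p$ reverse inequality for $w$'' (Lemma~\ref{lemma-improve}(b)), and the ball comparison Lemma~\ref{lemma:hormander-nocenter} — are not sufficient, no matter how carefully you chain them. If all you know about $w=k^{1-p}$ is that it lies in $A_p$, the exponent count closes only when $1+\frac{n}{q\tau'}\ge \frac{pQ}{p}$, i.e. $q\le \frac{n(1-1/\tau)}{Q-1}=\frac{np}{\,n+Q(p-1)\,}\le 1<p$ (since $Q\ge n$), so no admissible exponent $q>p$ can be produced; even the self-improvement $A_p\Rightarrow A_{p-\epsilon}$ cannot repair a deficit of this size. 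The missing idea is to use the hypotheses on $k$ \emph{jointly} to upgrade the Muckenhoupt class of $w$: by Lemma~\ref{lemma:rh-ap-combo}, $k\in A_{p'}\cap RH_\tau$ gives $k^\tau\in A_{\tau(p'-1)+1}$, and dualizing (Lemma~\ref{prop-Apfacts}(\ref{prop-Apfacts-a})) yields $w=k^{-p/p'}\in A_{\sigma'}$ with $\sigma'=\frac{p^2+np-p}{n-Q+pQ}$, which is exactly the critical exponent $s$ for which $\frac{n/\tau'+p}{Q}=s$; the openness $A_{\sigma'}\Rightarrow A_{\sigma'-\epsilon}$ (with constants uniform over balls, Remark~\ref{remark:constants}) then gives the strict inequality $q>p$. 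This is precisely the content of the paper's Theorem~\ref{thm:hormander:balance}, and without it your plan for the balance condition — and hence the whole reduction to the Harnack machinery — does not go through.
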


When $X$ generates a homogeneous Carnot group, we have the following theorem.

\begin{theorem}\label{thm:main-carnot} 
Given $\Omega=\R^n$, $X$ and $p$, $1<p<\infty$, suppose $X$ generates
a homogeneous Carnot group.  Let $A$ be an
$m\times m$ matrix of measurable functions that satisfies the
ellipticity condition~\eqref{eqn:CCS1}, where $k, \, k^{-p'/p} \in L^1_{loc}(\R^n)$ and  $k\in A_{p'}(\Omega,\rho,dx)\cap RH_Q (\Omega,\rho,dx)$. 
Then every weak solution
of~\eqref{eqn:easy2} is continuous almost everywhere:  the precise representative $\tilde{u}$
is continuous at every point of the set 
\[S(k) = \{ x\in \R^n : M^*k(x) < \infty \}. \]
 \end{theorem}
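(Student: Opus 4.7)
My plan is to reduce Theorem \ref{thm:main-carnot} to an application of the Harnack inequality (Theorem \ref{thm:harnack}) to the pair of weights $(w,v)=(k^{1-p},k)$, following the Moser oscillation-decay scheme. The Carnot group hypothesis enters only through the exact scaling $|B(x,r)|=|B(0,1)|\,r^Q$, which effectively replaces $n$ by $Q$ in the homogeneity estimates of Lemma \ref{lemma:hormander-nocenter} and collapses the exponent $\tau=1+p(Q-1)/(n+p-Q)$ of Theorem \ref{thm:main-CCS} to $\tau=Q$. The key technical step is therefore to verify that $(w,v)$ is $p$-admissible in the sense of Definition \ref{defn:admissible}.

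Condition (1) is immediate: since $(p-1)(p'-1)=1$, we have $w^{1-p'}=k$, and Lemma \ref{prop-Apfacts} gives that $k\in A_{p'}$ is equivalent to $w\in A_p$; both weights are then locally doubling by Lemma \ref{lemma-improve}. For the Poincar\'e balance condition (2), let $B_1\subset B_2$; the Carnot group scaling gives $r(B_1)/r(B_2)=(|B_1|/|B_2|)^{1/Q}$. Noting that $w^{-p'/p}=v$, H\"older's inequality with exponents $p,\,p'$ yields $|B_1|\le w(B_1)^{1/p}v(B_1)^{1/p'}$; combining this with the upper $A_p$ bound $w(B_2)\le[w]_{A_p}|B_2|^p v(B_2)^{1-p}$ on $B_2$ produces the lower estimate
\[
\bigl(w(B_1)/w(B_2)\bigr)^{1/p}\ge c\,(|B_1|/|B_2|)\bigl(v(B_2)/v(B_1)\bigr)^{1/p'}.
\]
On the other hand, Gehring's lemma upgrades $k\in RH_Q$ to $k\in RH_{Q+\delta}$ for some $\delta>0$, so Lemma \ref{lemma-improve} gives $v(B_1)/v(B_2)\le C(r_1/r_2)^{\alpha}$ with $\alpha=Q(Q+\delta-1)/(Q+\delta)>Q-1$. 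Combining these two inequalities reduces the balance condition to the power comparison $\alpha(1/q+1/p')\ge Q-1$, which, since $\alpha>Q-1$, admits a solution with $q>p$.

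With $p$-admissibility established, the rest of the argument is standard Moser iteration. Fix $x_0\in S(k)$. For small balls $B=B(x_0,r)$, Theorem \ref{thm:strong-mvt} gives local boundedness, and $M(2r)-u$ and $u-m(2r)$ are non-negative weak solutions on $2B$ (legitimized by Proposition \ref{prop:positive}, since $v=k\in A_\infty$), where $M(2r),\,m(2r)$ denote the essential supremum and infimum of $u$ on $2B$. Applying Theorem \ref{thm:harnack} to each and combining yields the oscillation decay
\[
\osc(u,B)\le\frac{e^{C\mu_p(2B)}-1}{e^{C\mu_p(2B)}+1}\,\osc(u,2B).
\]
To control $\mu_p(B(x_0,r))$ uniformly in $r$, I combine the $A_{p'}$ estimate $(\dashint_B k)(\dashint_B k^{1-p})^{p'-1}\le[k]_{A_{p'}}$ with Jensen's inequality applied to the convex function $t\mapsto t^{1-p}$, which gives $\dashint_B k^{1-p}\ge(\dashint_B k)^{1-p}$; a short computation produces
\[
\mu_p(B(x_0,r))^p=\frac{\int_B k}{\int_B k^{1-p}}\le [k]_{A_{p'}}\bigl(\dashint_B k\bigr)^p\le [k]_{A_{p'}}\bigl(Mk(x_0)\bigr)^p.
\]
At $x_0\in S(k)$ this is finite and independent of $r$, so the decay factor is uniformly bounded away from $1$, and iteration along dyadic scales $r_j=2^{-j}r_0$ produces geometric decay $\osc(u,B(x_0,r_j))\to 0$. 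This implies that $\tilde u$ has a continuous extension at $x_0$. Since $k\in L^1_{\text{loc}}$ guarantees $|\R^n\setminus S(k)|=0$, $\tilde u$ is continuous almost everywhere. The principal technical obstacle is the verification of condition (2) of $p$-admissibility with the weaker exponent $Q$: this is precisely where the Carnot group structure gives the improvement over Theorem \ref{thm:main-CCS}, since the exact homogeneity $|B(x,r)|=c\,r^Q$ eliminates the gap between the two exponents appearing in Lemma \ref{lemma:hormander-nocenter}.
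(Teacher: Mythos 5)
Your argument is correct and follows the paper's overall skeleton (verify that $(k^{1-p},k)$ is $p$-admissible, then run the Harnack/Moser oscillation-decay scheme and bound $\mu_p(B(x_0,r))\le Mk(x_0)$ via Jensen), but your verification of the balance condition \eqref{cond2admissible} is genuinely different from the paper's. The paper (Theorems \ref{thm:hormander:balance} and \ref{thm:carnot-balance}) works on the side of $w=k^{-p/p'}$: it passes through Lemma \ref{lemma:rh-ap-combo} to place $k^{\tau}$ in an $A_\sigma$ class, dualizes to get $w\in A_{\sigma'}$, uses the openness $A_s\Rightarrow A_{s-\epsilon}$, and then does the exponent bookkeeping against the two-sided volume bounds of Lemma \ref{lemma:hormander-nocenter} (with $n$ replaced by $Q$ in the Carnot case). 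You instead exploit the pointwise identity $w^{1/p}v^{1/p'}=1$ together with H\"older and the $A_p$ condition of $w$ on $B_2$ to convert the required lower bound on $w$-ratios into an upper bound on $v$-ratios, and then use Gehring self-improvement $RH_Q\Rightarrow RH_{Q+\delta}$ for $k$; the resulting condition $\alpha(1/q+1/p')\ge Q-1$ with $\alpha>Q-1$ indeed admits $q>p$, and the exact homogeneity $|B(x,r)|=cr^Q$ is used exactly where the paper uses it. Your route is more self-contained in the Carnot setting (it avoids Lemma \ref{lemma:rh-ap-combo} entirely), at the price of invoking Gehring's lemma in the homogeneous-type setting, which is not stated in the paper but is available (e.g.\ it can be extracted from Lemma \ref{lemma:rh-ap-combo} plus Lemma \ref{prop-Apfacts}\eqref{prop-Apfacts-b}); the paper's bookkeeping has the advantage of covering general Carnot--Carath\'eodory spaces, where the exponents $n$ and $Q$ in Lemma \ref{lemma:hormander-nocenter} do not match. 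One small correction: your final claim that $k\in L^1_{\mathrm{loc}}$ alone guarantees $|\R^n\setminus S(k)|=0$ is false for the global maximal function (e.g.\ $k(x)=e^{|x|}$ has $Mk\equiv\infty$); the paper instead derives finiteness a.e.\ of $Mk$ from $k\in RH_Q$ (so that $Mk$ is an $A_1$ weight). This only affects the ``almost everywhere'' gloss, not the continuity of $\tilde u$ at points of $S(k)$, which is the substantive content and which your argument establishes correctly.
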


\begin{remark}
Note that since $Q\geq n$, $\tau\geq Q$.  Therefore, the
hypotheses on the weight $k$ in Theorem~\ref{thm:main-carnot} is weaker than in
Theorem~\ref{thm:main-CCS}. 
\end{remark}

\begin{remark}
  Since $k\in RH_n$, $M^*k$ is a locally integrable function.  Hence, the complement of the set $S(k)$ has measure zero, and
hence the weak solutions to $\Lap_pu=0$ are continuous almost
everywhere.
\end{remark}

\begin{proof}[Proof of Theorem~\ref{thm:easy-thm}]
  We want to use the Harnack inequality, Theorem~\ref{thm:harnack}.  To do so, we first have to
  prove that $(\lambda^{p/2}w,\Lambda^{p/2}w)$ is a pair of
  $p$-admissible weights.  Since $w\in A_p (\Omega,\rho,dx)$, so is
  $\lambda^{p/2}w$, and by Remark~\ref{remark:localprop},
  $\Lambda^{p/2}w$ is locally doubling.  We also have that condition
  \eqref{cond2admissible} in Definition \ref{defn:admissible} holds.  Indeed, let $K$ be a compact subset
  of $\Omega$ , $R_K$ as given by Lemma~\ref{lemma:hormander-nocenter},
  $B$ a metric ball with center in $K$ and $r(B)\le R_K,$ and
  $B(x_1,r_1)\subset B(x_2,r_2)\subset B.$ By
  Lemma~\ref{lemma:hormander-nocenter} it is enough to show that there
  exists $q>p$ such that
\begin{equation} \label{eqn:easy3}
\left(\frac{|B(x_1,r_1)|}{|B(x_2,r_2)|}\right)^{1/Q} \leq 
C\left(\frac{w(B(x_1,r_1))}{w(B(x_2,r_2))}\right)^{\frac{1}{p}-\frac{1}{q}}. 
\end{equation}
By Lemma~\ref{prop-Apfacts} and Remark~\ref{remark:constants}  there exists $\epsilon>0,$ independent of $B,$ such that
$w\in A_{p-\epsilon}(B,\rho,dx)$ with uniform  constants.  Since $Q\ge n\geq 2$, we may assume that $\epsilon$
is so small that $Q> p/(p-\epsilon)$.  Therefore,  we can define $q>p$
by
\[ \frac{1}{p}-\frac{1}{Q(p-\epsilon)} = \frac{1}{q}, \]
and by Lemma~\ref{lemma-improve} (with $p-\epsilon$ in place of $p$
and $E=B(x_1,r_1)$) and Remark~\ref{remark:constants} we get
\eqref{eqn:easy3} with uniform constants.  Hence,
$(\lambda^{p/2}w,\Lambda^{p/2}w)$ is a pair of $p$-admissible weights.  

Any solution now satisfies the hypotheses of Theorems \ref{thm:strong-mvt} and \ref{thm:harnack}.  In particular, solutions are locally bounded and non-negative solutions satisfy a locally uniform Harnack inequality. We can now prove H\"older continuity. 
Let $u\in S^p(\Omega)$ be a solution of \eqref{eqn:easy2}.  Fix
$x\in \Omega$ and let $B=B(x,r)\subset \Omega$, where $r$ is
sufficiently small for Theorems~\ref{thm:strong-mvt}
and~\ref{thm:harnack} to hold, say $r\le r_0$.  Then by Theorem~\ref{thm:strong-mvt}, $u$ is bounded on $B$.  Let 
\begin{align*}
M = \esssup_{ \frac{1}{2}B} u,& \; m= \essinf_{\frac{1}{2}B} u, \quad M' = \esssup_{ B} u,  \; m'= \essinf_{ B} u.
\end{align*}

Then the functions $M'-u$ and $u-m'$ are non-negative solutions on $B$.  Therefore, by Theorem~\ref{thm:harnack} applied to the ball $ \frac{1}{2}B$, we get that
\begin{multline*}
M'-m = \esssup_{y\in \frac{1}{2}B} \big(M'-u(y)\big) \\ \leq \exp\left(C (\Lambda/\lambda)^{1/2}\right) 
\essinf_{y\in \frac{1}{2}B} \big(M'-u(y)\big) = \exp\left(C (\Lambda/\lambda)^{1/2}\right) (M'-M)
\end{multline*}
and
\begin{multline*}
M-m' = \esssup_{y\in \frac{1}{2}B} \big(u(y)-m'\big) \\ \leq \exp\left(C (\Lambda/\lambda)^{1/2}\right) 
\essinf_{y\in \frac{1}{2}B} \big(u(y)-m'\big) = \exp\left(C (\Lambda/\lambda)^{1/2}\right) (m-m').
\end{multline*}

Let $P=\exp\left(C (\Lambda/\lambda)^{1/2}\right)$.  If we add these two inequalities and rearrange terms, we get that 
\[ \osc_u(x,r/2) \leq \frac{P-1}{P+1}\osc_u(x,r), \]
where $\osc_u(x,s):=\esssup_{B(x,s)} u-\essinf_{B(x,s)}u.$
Therefore, if we iterate this inequality, by a standard argument (see, for instance,~\cite[Lemma~8.23]{MR1814364}) we get that there exist $\beta$ and $\alpha$ such that
\[
\osc_u(x,r) \leq \beta\,\osc_u(x,r_0) \left(\frac{r}{r_0}\right)^{\alpha},\quad 0<r<r_0.
\]
The constant $\beta$ and the exponent $\alpha$ depend only on $P.$ Then  $u$ is H\"older continuous on compact subsets of $\Omega.$  For more details we refer the reader to the end of the proof of Theorems \ref{thm:main-CCS} and \ref{thm:main-carnot}.
\end{proof}

The proofs of Theorems~\ref{thm:main-CCS} and~\ref{thm:main-carnot}
follow the same basic outline as the proof of
Theorem~\ref{thm:easy-thm}.  A key step in both, however, is proving
that $(k^{-p/p'},k)$ is a pair of $p$-admissible weights.   By
assumption, $k\in A_{p'}(\Omega,\rho,dx)$, so $k^{-p/p'}=k^{1-p}\in
A_p (\Omega,\rho,dx)$.   That condition $(2)$ in Definition \ref{defn:admissible} holds is the substance of the next two results.

\begin{theorem} \label{thm:hormander:balance} Let $p>Q-n.$ The pair $(k^{-p/p'},k)$
  satisfies condition \eqref{cond2admissible} of
  Definition~\ref{defn:admissible} provided that $k^{-p/p'}\in
  A_s(\Omega,\rho,dx)$, $k\in RH_t(\Omega,\rho,dx)$, $1<t<\infty$,
  $\frac{p}{Q}<s<\infty,$ and
\begin{equation}\label{eqn:hb1}
 \frac{pt'}{n} \leq \left(\frac{sQ}{p}\right)'-1.
\end{equation}
In particular, this is the case if
\begin{equation} \label{eqn:hb0}
 k \in A_{p'}(\Omega,\rho,dx) \cap RH_{\tau}(\Omega,\rho,dx), \qquad \tau =
1+\frac{p(Q-1)}{n+p-Q}.
\end{equation}
\end{theorem}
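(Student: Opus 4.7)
The plan is to verify the balance inequality of condition~(2) by controlling each factor in terms of the volume ratio $\beta = |B_1|/|B_2| \in (0, 1]$. Fix a compact set $K \subset \Omega$, and let $R_K$ be as in Lemma~\ref{lemma:hormander-nocenter}. Consider any ball $B$ with center in $K$ and radius at most $R_K$, together with balls $B_1 \subseteq B_2 \subseteq B$. Three estimates are available and, by Remark~\ref{remark:constants}, hold with constants uniform over such $B$: Lemma~\ref{lemma:hormander-nocenter} gives $r(B_1)/r(B_2) \leq C \beta^{1/Q}$; the hypothesis $k \in RH_t$ together with Lemma~\ref{lemma-improve}(a) gives $k(B_1)/k(B_2) \leq C \beta^{1/t'}$; and the hypothesis $w := k^{-p/p'} \in A_s$ together with Lemma~\ref{lemma-improve}(b) gives $\beta \leq C(w(B_1)/w(B_2))^{1/s}$.

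Combining these, the required balance inequality, for a candidate $q > p$, reduces to
\[
\beta^{\,1/Q + 1/(qt')} \leq C\, \beta^{\,s/p},
\]
which, since $\beta \leq 1$, holds provided $1/Q + 1/(qt') \geq s/p$, i.e.\ $q \leq pQ/(t'(sQ-p))$. Such a $q$ strictly exceeds $p$ precisely when $t'(sQ-p) < Q$. The hypothesis $pt'/n \leq (sQ/p)' - 1$ rearranges to $t'(sQ-p) \leq n$, and since $n \leq Q$, this gives $t'(sQ-p) \leq Q$. The strict inequality is then obtained via self-improvement of the $A_s$ class (Lemma~\ref{prop-Apfacts}\eqref{prop-Apfacts-c}): replacing $s$ by $s - \epsilon$ for some small $\epsilon > 0$ strictly decreases $t'(sQ-p)$ and leaves room for a choice $q > p$.

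For the ``in particular'' claim, the first task is to locate a suitable $s$ from the hypothesis $k \in A_{p'} \cap RH_\tau$. By Lemma~\ref{lemma:rh-ap-combo}, $k^\tau \in A_{q_0}$ with $q_0 = \tau(p'-1) + 1 = (\tau + p - 1)/(p-1)$; applying Lemma~\ref{prop-Apfacts}\eqref{prop-Apfacts-a} to $k^\tau$ gives $k^{1-p} = k^{-p/p'} = (k^\tau)^{1-q_0'} \in A_{q_0'}$, with $q_0' = 1 + (p-1)/\tau$. Setting $s = 1 + (p-1)/\tau$ and $t = \tau$, a direct algebraic computation using $\tau = 1 + p(Q-1)/(n+p-Q)$ shows that $t'(sQ-p) = n$ exactly, meeting the hypothesis of the general statement with equality. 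The secondary condition $p/Q < s$ reduces to $sQ - p > 0$; substituting yields $sQ - p = pn(Q-1)/(n + Q(p-1)) > 0$ for $Q > 1$. The main technical hurdle is the precise identification of $s$ via the interplay between the $A_{p'}$ and $RH_\tau$ hypotheses (Lemmas~\ref{prop-Apfacts}\eqref{prop-Apfacts-a} and~\ref{lemma:rh-ap-combo}), together with the algebraic verification that $t'(sQ-p) = n$ for the specific exponent $\tau$ given in the statement.
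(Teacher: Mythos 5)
Your proof is correct, and your treatment of the ``in particular'' claim is essentially identical to the paper's (Lemma~\ref{lemma:rh-ap-combo} followed by Lemma~\ref{prop-Apfacts}\eqref{prop-Apfacts-a}, with the same algebra giving $s=1+(p-1)/\tau$, $t=\tau$ and $t'(sQ-p)=n$). The main part, however, is chained differently, and the difference matters. The paper bounds $(k(B_1)/k(B_2))^{1/q}\le C(|B_1|/|B_2|)^{1/(qt')}$, converts this to a radius ratio via the \emph{upper} volume estimate $|B_1|/|B_2|\le D_K(r_1/r_2)^n$ of Lemma~\ref{lemma:hormander-nocenter}, chooses $q$ so that $n/(qt')+1=(s-\epsilon)Q/p$, and then converts back to a volume ratio via the lower estimate with exponent $Q$; that detour through the radius ratio is exactly where the Euclidean dimension $n$ in \eqref{eqn:hb1} enters. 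You instead work entirely with $\beta=|B_1|/|B_2|$, invoking the lower estimate only once to bound the single factor $r_1/r_2\le C\beta^{1/Q}$, and you never need the $n$-exponent upper bound. Consequently your constraint on $q$ is $1/Q+1/(qt')\ge (s-\epsilon)/p$, whose solvability with $q>p$ requires only $t'(sQ-p)\le Q$ (plus the same $A_{s-\epsilon}$ self-improvement to make the inequality strict), i.e.\ the balance condition of Theorem~\ref{thm:carnot-balance} rather than the stronger \eqref{eqn:hb1}; since $n\le Q$, the theorem as stated follows, and in fact your chain would permit relaxing $RH_\tau$ to $RH_Q$ in Theorem~\ref{thm:main-CCS}, matching the Carnot-group case without the group structure. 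Two points you should make explicit to be fully aligned with the paper's framework: the self-improvement $A_s\Rightarrow A_{s-\epsilon}$ (Lemma~\ref{prop-Apfacts}\eqref{prop-Apfacts-c}) should be applied to the restriction of $k^{-p/p'}$ to the metric balls $B$, which are spaces of homogeneous type, with the uniformity of $\epsilon$ and of all constants over admissible $B$ supplied by Remark~\ref{remark:constants}; and in the degenerate case $(s-\epsilon)Q\le p$ the exponent inequality holds for every $q>p$, so that case is harmless.
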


If $X$ generates a Carnot group on $\R^n$, then we can repeat the proof of Theorem~\ref{thm:hormander:balance} with $n$ replaced by $Q$ to get the following result.

\begin{theorem} \label{thm:carnot-balance} Suppose $\Omega=\R^n$ and
  $X$ generates a homogeneous Carnot group. Then the pair
  $(k^{-p/p'},k) $ satisfies condition \eqref{cond2admissible} of
  Definition \ref{defn:admissible} with uniform constants for all
  compact sets, provided that $k^{-p/p'}\in A_s(\R^n,\rho,dx)$, $k\in
  RH_t(\R^n,\rho,dx)$, $1<t<\infty$, $\frac{p}{Q}<s<\infty,$ and
\[ \frac{pt'}{Q} \leq \left(\frac{sQ}{p}\right)'-1. \]
In particular, this is the case if
\[ k \in A_{p'}(\R^n,\rho,dx) \cap RH_{Q}(\R^n,\rho,dx),\]
which in turn implies that $k^{-p/p'}\in A_{1+\frac{p-1}{Q}}(\R^n,\rho,dx)$.
\end{theorem}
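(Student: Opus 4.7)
The plan is to imitate the proof of Theorem~\ref{thm:hormander:balance}, exploiting the stronger homogeneity available in a Carnot group. The crucial simplification is the exact identity $|B(x,r)|=|B(0,1)|\,r^Q$, which for any two balls $B_1,B_2$ (not necessarily concentric) gives
\[
\frac{|B_1|}{|B_2|}=\Bigl(\frac{r(B_1)}{r(B_2)}\Bigr)^Q,
\]
with no compact-set-dependent constants. Accordingly, the two distinct exponents $n$ and $Q$ appearing in Lemma~\ref{lemma:hormander-nocenter} collapse into a single $Q$, and all structural constants become translation invariant, which will yield the uniformity claim across compact sets.

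To establish the balance condition, I would fix balls $B_1\subseteq B_2$, set $\theta=|B_1|/|B_2|\le 1$, and apply Lemma~\ref{lemma-improve}\eqref{lemma-improve-a} to $k\in RH_t$ and Lemma~\ref{lemma-improve}\eqref{lemma-improve-b} to $k^{-p/p'}\in A_s$:
\[
\frac{k(B_1)}{k(B_2)}\le C\theta^{1/t'},
\qquad
\frac{k^{-p/p'}(B_1)}{k^{-p/p'}(B_2)}\ge c\,\theta^{s}.
\]
Plugging these together with the Carnot identity into the desired inequality from condition~\eqref{cond2admissible} of Definition~\ref{defn:admissible} reduces matters to
\[
\theta^{1/Q+1/(qt')}\le C\,\theta^{s/p},
\]
which, since $\theta\le 1$, is equivalent to $1/Q+1/(qt')\ge s/p$. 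A direct algebraic manipulation shows that a choice of $q>p$ making this hold exists precisely when $t'(sQ-p)<Q$. The hypothesis $pt'/Q\le(sQ/p)'-1=p/(sQ-p)$ produces only the weak inequality $t'(sQ-p)\le Q$, so I would invoke the self-improvement of Lemma~\ref{prop-Apfacts}\eqref{prop-Apfacts-c} (replacing $s$ by $s-\epsilon$) or of $RH_t$ via Gehring's lemma (replacing $t$ by $t+\delta$) to upgrade to strict inequality. Taking $q=pQ/(t'(sQ-p))>p$ then closes the argument.

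For the ``in particular'' clause, I would apply Lemma~\ref{lemma:rh-ap-combo} to $k\in A_{p'}\cap RH_Q$ (using $p'-1=1/(p-1)$) to obtain $k^Q\in A_{Q/(p-1)+1}$. A short duality calculation based on Lemma~\ref{prop-Apfacts}\eqref{prop-Apfacts-a}---the dual exponent of $Q/(p-1)+1$ is $(Q+p-1)/Q=1+(p-1)/Q$, and the corresponding dual weight is $(k^Q)^{(1-p)/Q}=k^{1-p}=k^{-p/p'}$---then yields $k^{-p/p'}\in A_{1+(p-1)/Q}$, which is the final assertion of the theorem. Setting $s=1+(p-1)/Q$ and $t=Q$ in the general criterion produces the boundary equality $pt'/Q=p/(Q-1)=(sQ/p)'-1$, so the argument of the previous paragraph (after a small self-improvement) yields the balance condition with some $q>p$.

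The main technical obstacle I anticipate is ensuring that all the constants involved are genuinely uniform across compact sets rather than only locally uniform. In the Carnot group setting this reduces to the observation that the Haar measure and all weight classes are translation invariant, so the constants in Lemmas~\ref{prop-Apfacts} and~\ref{lemma-improve} (and in the Gehring-type self-improvement) depend only on $p$, $[k]_{A_{p'}}$, $[k]_{RH_Q}$ and on the structural constants of the group, all of which are global.
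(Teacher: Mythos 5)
Your proposal is correct and follows essentially the same route as the paper: the paper proves this theorem by repeating the proof of Theorem~\ref{thm:hormander:balance} with $n$ replaced by $Q$, which is exactly what you do — the exact Carnot volume identity $|B(x,r)|=|B(0,1)|r^Q$ replacing Lemma~\ref{lemma:hormander-nocenter}, the $RH_t$ and (self-improved) $A_{s-\epsilon}$ estimates of Lemma~\ref{lemma-improve} giving the same choice of $q$, and the same Lemma~\ref{lemma:rh-ap-combo} plus duality computation for the ``in particular'' clause.
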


\begin{proof}[Proof of Theorem~\ref{thm:hormander:balance}]
  Let $R_\Omega$ be given by Lemma \ref{lemma:hormander-nocenter}; let
  $B$ be a metric ball with center in $\Omega$ and radius $r(B)\leq
  R_\Omega$; and let $B_1=B(x_1,r_1)$, $B_2=B(x_2,r_2)$ be such that
  $B_1\subset B_2\subset B$.  Fix a pair $(k^{-p/p'},k)$ as in the
  hypotheses.  Then by \eqref{eqn:hb1},
\begin{equation} \label{eqn:hb2}
 \frac{n}{pt'} \geq \frac{sQ}{p}-1.
\end{equation}
By Lemma~\ref{prop-Apfacts}, since the restriction of $k^{-p/p'}$ to $B$ belongs to $A_s(B,\rho,dx)$, there exists
$\epsilon>0$ such that the restriction of  $k^{-p/p'}$ to $
B$ is in $A_{s-\epsilon}(B,\rho,dx)$.   Furthermore, by Remark~\ref{remark:constants}, we may choose $\epsilon$  independent of $B$ and the constants are uniform.   Combining this
with \eqref{eqn:hb2} we get that
\[ \frac{n}{p}\left(1-\frac{1}{t}\right) >
\frac{(s-\epsilon)Q}{p}-1. \]
Rearranging terms we define
\begin{equation} \label{eqn:hb3}
 q = \frac{n(t')^{-1}}{(s-\epsilon)Qp^{-1}-1} > p.
\end{equation}

Therefore, since  $k\in RH_t (B,\rho,dx)$, by Lemmas~\ref{lemma:hormander-nocenter}
and~\ref{lemma-improve} and inequality \eqref{eqn:hb3},
\begin{multline*}
\frac{r_1}{r_2}\left(\frac{k(B_1)}{k(B_2)}\right)^{1/q}
 \leq C
\frac{r_1}{r_2}\left(\frac{|B_1|}{|B_2|}\right)^{\frac{1}{qt'}} 
 \leq C\left(\frac{r_1}{r_2}\right) ^{\frac{n}{qt'}+1} \\
 = C\left(\frac{r_1}{r_2}\right) ^{\frac{(s-\epsilon)Q}{p}} 
 \leq C\left(\frac{|B_1|}{|B_2|}\right)^{\frac{s-\epsilon}{p}} 
 \leq C\left(\frac{k^{-p/p'}(B_1)}{k^{-p/p'}(B_2)}\right)^{1/p},
\end{multline*}
 again with uniform constants by Remark~\ref{remark:constants}.
Since this is true for every such ball $B$,  $(k^{-p/p'},k)$ satisfies \eqref{cond2admissible}  of
Definition~\ref{defn:admissible}.

We now prove that \eqref{eqn:hb0} implies \eqref{eqn:hb1}.  By
Lemma~\ref{lemma:rh-ap-combo},
\[ k^\tau \in A_{\tau(p'-1)+1}(B,\rho,dx).  \]
Let $\sigma=\tau(p'-1)+1$.   Then a straightforward calculation shows that
\[ \sigma' = \frac{\tau(p'-1)+1}{\tau(p'-1)} =
\frac{p^2+np-p}{n-Q+pQ}, \]
and by Lemma~\ref{prop-Apfacts},
\[ k^{-p/p'} = k^{\tau(1-\sigma')} \in A_{\sigma'}(B,\rho,dx). \]
On the other hand if we replace $t$ by $\tau$ in~\eqref{eqn:hb1} and
take equality, then solving for $s$ we get (after another calculation)
\[ s = \frac{n}{Q}\left(1-\frac{1}{\tau}\right)+\frac{p}{Q} =
\frac{p^2+np-p}{n-Q+pQ} =\sigma'. \]
Note that $s>\frac{p}{Q}$ since $\tau>1.$ Therefore, $(k^{-p/p'},k)$ satisfies condition \eqref{cond2admissible}  of Definition~\ref{defn:admissible} and our proof is complete.
\end{proof}

\begin{proof}[Proof of Theorems~\ref{thm:main-CCS} and \ref{thm:main-carnot}]
  Our proof is a generalization of the proof of the main theorem
  in~\cite{cruz-diGironimo-sbordone-P}.  By Theorems
  \ref{thm:hormander:balance}, \ref{thm:carnot-balance}, and part
  \eqref{prop-Apfacts-a} of Lemma~\ref{prop-Apfacts} the pair of
  weights $(k^{-p/p'},k)$ is $p$-admissible in $\Omega$.  Let
  $B(x,r_0)\subseteq\Omega$ be a ball with sufficiently small radius
  so that Theorems \ref{thm:strong-mvt} and \ref{thm:harnack} hold;
  particular we may assume $r_0\leq 1$.  Set $B=B(x,r)$ where $r\leq
  r_0.$ Arguing as in the proof of Theorem \ref{thm:easy-thm} we
  obtain
$$\osc_u(x,r/2)\leq \frac{\exp(C_{r_0}(x)\mu_p(\frac12B))-1}{\exp(C_{r_0}(x)\mu_p(\frac12B))+1}\osc_u(x,r),$$
where the constant $C_{r_0}(x)$ depends on $x$ and $r_0$, but is finite for each $x$.
For any ball $B$ containing $x$,  H\"older's inequality yields
$$\mu_p(B)=\Big(\frac{\dashint_B k\,dy}{\dashint_B
  k^{-p/p'}\,dy}\Big)^{1/p}\leq \dashint_B k\,dy\leq M^*k(x);$$
hence,
\begin{equation}\label{eqn:osc} \osc_u(x,r/2)\leq
  \frac{\exp(C_{r_0}(x)M^*k(x))-1}{\exp(C_{r_0}(x)M^*k(x))+1}\osc_u(x,r)=\gamma(x)\osc_u(x,r).\end{equation}
We will now prove that the precise representative of $u$ is continuous on the set $S(k)$.
Notice that 
$$S(k)=\{x\in \Omega:M^*k(x)<\infty\}=\{x\in \Omega:\gamma(x)<1\}$$ and fix $x\in S(k)$.  Therefore, since $\gamma(x)<1$, we may apply Lemma 8.23 in \cite{MR1814364} to inequality \eqref{eqn:osc} to obtain
$$\osc_u(x,r)\leq c(x)\Big(\frac{r}{r_0}\Big)^{\alpha(x)}\,\osc_u(x,r_0), \qquad 0<r\leq r_0,$$
where $c(x)$ and $\alpha(x)$ are both finite constants depending on $x$.  Set 
$$u_{B(x,r)}=\dashint_{B(x,r)} u(y)\,dy$$ 
and notice that  $\lim_{r\ra 0} u_{B(x,r)}$ exists.  Indeed if $0<s<t<r_0$, then
\begin{align*}
|u_{B(x,t)}-u_{B(x,s)}| &\leq \dashint_{B(x,t)}\dashint_{B(x,s)}|u(z)-u(y)|\,dydz \\
&\leq \osc_u(x,t)\\
&\leq c(x)\Big(\frac{t}{r_0}\Big)^{\alpha(x)} \osc_u(x,r_0)\ra 0 \qquad \text{as} \ t\ra 0.
\end{align*}
If we set $\tilde{u}(x)=\lim_{r\ra 0} u_{B(x,r)}$ then a similar
argument shows that $\tilde{u}$ is the precise representative of $u$.
To see that $\tilde{u}$ is continuous on $S(k)$, let $B$ be a ball in
$\Omega$ with sufficiently small radius.  Then for any
$y\in S(k)\cap B$ we have
$$\essinf_B u\leq \tilde{u}(y)=\lim_{r\ra 0}\dashint_{B(y,r)} u(z)\,dz \leq \esssup_B u,$$
which implies that the pointwise oscillation of $\tilde{u}$ on $S(k)\cap B$ is dominated by the essential oscillation of $u$ on $B$.  The continuity of $\tilde{u}$ on $S(k)$ follows at once.
\end{proof}

\section{Examples}\label{sharp}



We now show that Theorem~\ref{thm:main-CCS} is sharp in the sense that there  exist $\Omega,$ $X,$  $A,$ $k$ and $p$ satisfying its hypothesis and a solution $u$ to the corresponding equation that fails to be continuous in a nonempty set of measure zero.  We will work in the euclidean setting, that is, $X=\nabla$ and address the case $p = 2$.  In this setting the ellipticity condition \eqref{eqn:CCS1} is given by
\begin{equation}\label{eqn:ellip:p=2} k(x)^{-1} |\xi|^2 \leq \langle A(x)\xi,\xi\rangle \leq k(x)|\xi|^2. \end{equation}
and \eqref{eqn:easy2} becomes
\begin{equation}\label{eqn:deg:laplace}\Div \big(A\grad u \big) = 0. \end{equation}
The examples presented below are based on those given in Zhong~\cite{MR2435212}.  We give an example when $n\geq 3$. An example when $n=2$ can also be constructed in a similar fashion using another example from \cite{MR2435212}.

\begin{example} \label{example:n>2} Let $\varepsilon>0$ and $\Omega=\{x=(x_1,\cdots,x_n)\in \R^n: |x|<\frac{1}{e}\}=B(0,1/e)$.  Define $\Omega_1=\{x\in\Omega : 2\,|x_n|>|x|\},$  
$\Omega_2=\{x\in\Omega : 2\,|x_n|<|x|\},$ and $A(x)=\tau(x)\,Id,$ where $Id$ is the $n\times n$ identity matrix and 
\[
\tau(x)=  \left\{ \begin{array}{ll} 
      1 & \text{ if }x\in\Omega_1,\\
      |\log|x||^{-(1+\varepsilon)} & \text{ if } x\in\Omega_2.
   \end{array} \right.
\] 
Zhong~\cite[Theorem 1.2]{MR2435212} proved that the equation \eqref{eqn:deg:laplace}
 has a weak solution $u\in S^2(\Omega)$ that is discontinuous at the origin.
In order to prove the sharpness of Theorem~\ref{thm:main-CCS} it is enough to check that there exists $\varepsilon>0$ such  that the corresponding matrix $A$ satisfies the degenerate ellipticity condition \eqref{eqn:ellip:p=2} for some  $k\in A_2(\Omega, \rho,dx)\cap RH_n(\Omega,\rho,dx),$ where $\rho$ denotes Euclidean distance.
We have that 
\[
\langle A(x)\xi,\xi\rangle= \left\{ \begin{array}{ll} 
      |\xi|^2 & \text{ if }x\in\Omega_1,\\
      |\log|x||^{-(1+\varepsilon)} \,|\xi|^2& \text{ if } x\in\Omega_2,
   \end{array} \right.
\]
which implies 
 \[
\min\left(1,|\log|x||^{-(1+\varepsilon)}\right)\,|\xi|^2\le \langle A(x)\xi,\xi\rangle\le \max\left(1,|\log|x||^{-(1+\varepsilon)}\right)\,|\xi|^2
\]
for $ x\in \Omega$ and $\xi\in\R^n.$ Since $\min\left(1,|\log|x||^{-(1+\varepsilon)}\right)=|\log|x||^{-(1+\varepsilon)}$ and \\ $ \max\left(1,|\log|x||^{-(1+\varepsilon)}\right)=1\le |\log|x||^{1+\epsilon},$ $x\in \Omega,$ we obtain
\[
k(x)^{-1}\,|\xi|^2\le \langle A(x)\xi,\xi\rangle\le k(x)\,|\xi|^2,\quad x\in \Omega,\,\xi\in\R^n,
\]
with $k(x)=|\log|x||^{1+\varepsilon}.$ 

We now show that, for any $\varepsilon>0,$  $k\in A_2(\Omega,\rho,dx)\cap RH_n(\Omega,\rho,dx).$  Consider the function $h(x)= \big|\log|x|\big|$ if $|x|< {1}/{e}$ and  $1$ otherwise. Then $h^s\in A_1(\R^n,\rho,dx)$ for all $s\ge 1$: that is,
\begin{equation}\label{claimforh}
\frac{1}{|B|}\int_B h(x)^s\,dx\sim \inf_{x\in B} h(x)^s.
\end{equation}
The proof of \eqref{claimforh} is elementary by considering balls
$B(x_0,r)$ with $|x_0|<cr$ and $|x_0|\geq cr$.  Note that this says
that $h\in RH_s(\R^n,\rho,dx)$ for all $s\ge 1.$   Given
\eqref{claimforh},  we use that $|B\cap\Omega|\sim |B|$ for all
Euclidean balls $B$ with center in $\Omega$ and radius $r(B)\leq C
\text{diam}(\Omega)$ to get that the restriction of $h^s$ to $\Omega$
is in $A_1(\Omega,\rho,dx)$ for all $s\ge 1.$ In particular, taking
$s=1+\epsilon,$ gives that $k\in A_1(\Omega,\rho,dx)$ and taking
$s=(1+\epsilon)n$ yields that $k^n\in A_1(\Omega,\rho,dx),$ from which
we conclude that $k\in A_2(\Omega,\rho,dx)\cap RH_n(\Omega,\rho,dx). $

\end{example}

\section{Application to Mappings of finite distortion}
\label{section:finite-distortion}

Recall that $f:\Omega\ra \R^n$ with $f\in W_{loc}^{1,1}(\Omega,\R^n)$ is a mapping of finite distortion if there is a function $K\geq 1$ finite a.e. such that
\begin{equation*}\label{distortion} |Df(x)|^n\leq K(x)J_f(x)\end{equation*}
where $|Df(x)|=\sup_{|h|=1} |Df(x)h|$ is the operator norm and
$J_f(x)=\det Df(x)$ is locally integrable. Throughout this section we will assume that
$n\geq 3$, as much of the planar theory is known.

The study of continuity of such functions is an active line of
research initiated by Vodop'janov and Gol'd{\v{s}}te{\u\i}n \cite{MR0414869}.  They showed that
if $f\in W^{1,n}_{loc}(\Omega,\R^n)$, then $f$ has a representative
that is continuous on $\Omega$.  More recent results include those of Manfredi
\cite{MR1294334}, Heinonen and Koskela \cite{MR1241287}, Iwaniec,
Koskela and Onninen \cite{MR1833892} 
and Kauhanen, Koskela, Mal\'y and
Zhong~\cite{MR2053566}.  Without assuming higher
integrability on $|Df|$, many of the continuity results assume some
sort of exponential integrability condition on $K$, namely $\exp(K)\in L^p_{loc}(\Omega)$
for some $0<p<\infty$.  Broadly speaking, to obtain continuity one
should not stray too far from the assumptions $f\in
W_{loc}^{1,n}(\Omega,\R^n)$ or $K\in$ Exp $L$, (see \cite{MR2053566}).

Our results confirm this perception.  We are able to work below the
natural thresholds of $W_{loc}^{1,n}(\Omega,\R^n)$ and exponential
integrability of the distortion function.  But in doing so we do not
obtain functions which are continuous everywhere, but rather
continuous except on a set of measure zero.  Our results are in the spirit of 
 those in \cite{MR1294334} where it was shown that given a mapping
of finite distortion $f\in W_{loc}^{1,p}(\Omega,\R^n)$ for some $p$,
$n-1<p<n$, then $f$ is continuous except on a set of $p$-capacity
zero.  On the other hand, our natural assumption is that $f\in
W^{1,n-1}_{loc}(\Omega,\R^n)$ and the distortion function belongs to
an appropriate weight class.

To state our results we briefly sketch some basic facts about mappings
of finite distortion.  A more detailed exposition can be found in
\cite{MR1859913}.  Given  a mapping of finite distortion $f$, we will be
concerned with two distortion functions: the inner, denoted $K_I$, and the outer, denoted $K_O$.  They are defined as follows 
$$K_I(x)=\frac{|\text{adj}\, Df(x)|^n}{J_f(x)^{n-1}}  \quad \text{and} \quad K_O(x)=\frac{|Df(x)|^n}{J_f(x)}. $$
Here adj $A$ denotes the adjoint of the matrix $A$ defined by the equation
$A\, \text{adj}\, A=(\det A)\, \text{Id}$.  The distortion tensor 
$$G(x)=\displaystyle\frac{Df(x)^TDf(x)}{J_f(x)^{2/n}} $$
satisfies the ellipticity condition
\begin{equation} \label{elliptic:out:in}{K_O(x)^{-2/n}}{|\xi|^2}\leq \langle G(x)^{-1}\xi,\xi\rangle \leq K_I(x)^{2/n}|\xi|^2.\end{equation}
Since in general $K_I(x)^{1/(n-1)}\leq K_O(x)\leq K_I(x)^{n-1},$ 
we have the following ellipticity condition on the inner distortion function:
\begin{equation} \label{ellipinner}{K_I(x)^{-2/n'}}{|\xi|^2}\leq
  \langle G(x)^{-1}\xi,\xi\rangle \leq K_I(x)^{2/n}|\xi|^2. 
\end{equation}
But this is precisely  the ellipticity condition \eqref{eqn:CCS1} when $p=n$ and $k=K_I$.

\begin{theorem} \label{finitedistortion:thm} Suppose $f:\Omega \ra
  \R^n$ is a mapping of finite distortion whose coordinate functions
  belong to $S^n(\Omega)\cap W^{1,n-1}_{loc}(\Omega)$ and whose inner
  distortion function satisfies $K_I\in A_{n'}(\Omega)\cap
  RH_n(\Omega)$.  (Equivalently, $K_I^n \in A_{n'+1}(\Omega)$.)  
 Then $f$ has a representative which is continuous in the set
$$\{x\in \Omega: MK_I(x)<\infty\}.$$
\end{theorem}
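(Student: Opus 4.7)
The plan is to reduce the theorem to Theorem~\ref{thm:main-CCS} applied to each coordinate function of $f$ in the Euclidean setting ($X=\nabla$, $Q=n$) with $p=n$, $k=K_I$, and $A=G^{-1}$, where $G$ is the distortion tensor. First I will check the ellipticity and weight hypotheses. The inner distortion inequality \eqref{ellipinner} is
\[
K_I(x)^{-2/n'}|\xi|^2\le \langle G^{-1}(x)\xi,\xi\rangle\le K_I(x)^{2/n}|\xi|^2,
\]
which is precisely \eqref{eqn:CCS1} with $p=n$ and $k=K_I$. For $X=\nabla$ and $p=Q=n$, the critical reverse H\"older exponent in Theorem~\ref{thm:main-CCS} specializes to
\[
\tau = 1+\frac{p(Q-1)}{n+p-Q} = 1+\frac{n(n-1)}{n} = n,
\]
so the hypothesis $K_I\in A_{n'}(\Omega)\cap RH_n(\Omega)$ is exactly what is required. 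The parenthetical equivalence with $K_I^n\in A_{n'+1}(\Omega)$ then follows from Lemma~\ref{lemma:rh-ap-combo}, since $t(p-1)+1 = n(n'-1)+1 = n'+1$.

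The second step is to show that each coordinate function $u=f_i$ is a weak solution of $\Lap_n u=0$ for the operator associated to $A=G^{-1}$. A standard computation for mappings of finite distortion (see Iwaniec and Martin~\cite{MR1859913}) gives the identity $Df\cdot \mathrm{adj}(Df)^T = J_f\,\mathrm{Id}$, from which one derives, after taking divergences and using the definition of $G$, the pointwise relation
\[
\langle G^{-1}(x)\grad u,\grad u\rangle^{\frac{n-2}{2}} G^{-1}(x)\grad u = |\mathrm{cof}\,Df(x)|\,\cdot(\text{an }(n{-}1)\text{-form pairing}).
\]
The key consequence is that each $f_i$ is $\mathcal{A}$-harmonic in the weak sense:
\[
\int_\Omega \langle G^{-1}\grad u,\grad u\rangle^{\frac{n-2}{2}}\langle G^{-1}\grad u,\grad\varphi\rangle\,dx = 0, \qquad \varphi\in S_0^n(\Omega).
\]
The $W^{1,n-1}_{loc}$ hypothesis is exactly what makes this derivation rigorous: the test-function integrand has integrand of order $|\grad u|^{n-1}|\grad\varphi|$, and this is the minimal integrability needed to justify the formal integration by parts via a cofactor identity. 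The assumption that each $f_i$ lies in $S^n(\Omega)$ guarantees that the weak formulation above is meaningful and that the weak-solution framework of Section~\ref{section:PDE} applies.

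Once the PDE is in place, Theorem~\ref{thm:main-CCS} applies directly to $u=f_i$: the precise representative $\tilde f_i$ is continuous at every point of $S(K_I)=\{x\in\Omega: MK_I(x)<\infty\}$. Taking the componentwise precise representative gives a representative of $f$ continuous on $S(K_I)$, as claimed. Note that because $K_I\in RH_n$ we have $MK_I\in A_1$, so $|\Omega\setminus S(K_I)|=0$, recovering continuity almost everywhere.

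The main obstacle is the second step: justifying that the coordinate functions of $f$ solve the stated degenerate $n$-Laplacian weakly, with only the $W^{1,n-1}_{loc}$ regularity and local integrability of $J_f$. This is classical for quasiregular maps where $|Df|\in L^n_{loc}$, but in our setting one must appeal to the cofactor/adjugate calculus for mappings of finite distortion and verify that all integrals arising in the distributional identity are absolutely convergent; the $A_{n'}\cap RH_n$ control on $K_I$ combined with $u\in S^n(\Omega)$ is what ultimately makes this work.
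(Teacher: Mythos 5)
Your proposal is correct and follows essentially the same route as the paper: verify the ellipticity condition \eqref{ellipinner} with $p=n$, $k=K_I$, $A=G^{-1}$, show each coordinate function is a weak solution by using the distributional divergence-free property of the adjugate (valid under $W^{1,n-1}_{loc}$ regularity), and invoke the a.e.-continuity result (the paper cites Theorem~\ref{thm:main-carnot}, which in the Euclidean case agrees with your use of Theorem~\ref{thm:main-CCS}, since $Q=n$ gives $\tau=n$). The only imprecision is your displayed pointwise relation: the identity actually needed, which the paper quotes from Iwaniec--Martin, is $\langle G^{-1}\nabla f^i,\nabla f^i\rangle^{\frac{n-2}{2}}G^{-1}\nabla f^i=[\mathrm{adj}\,Df]_i$, the $i$-th column of the adjugate, after which $\int_\Omega\langle[\mathrm{adj}\,Df]_i,\nabla\varphi\rangle\,dx=0$ for $f\in W^{1,n-1}_{loc}$ yields the weak equation exactly as you intend.
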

\begin{remark} The assumption $f=(f^1,\ldots,f^n)\in W^{1,n-1}_{loc}(\Omega,\R^n)$ is the very minimal assumption to give meaning to the equation
$$\Div(\langle G(x)^{-1}\nabla f^i,\nabla f^i\rangle^{\frac{n-2}{2}} G(x)^{-1}\nabla f^i)=0 \qquad i=1,\ldots,n$$
(see \cite[p. 379]{MR1859913}).  Since we are also assuming that $f^i\in S^n(\Omega)$ the assumption $f\in W^{1,n-1}_{loc}(\Omega,\R^n)$ is superfluous if $K_O\in L^{n-1}_{loc}(\Omega)$.   In this case $S^n(\Omega)$ embeds into $W^{1,n-1}_{loc}(\Omega)    $.  Indeed, if $g\in S^n(\Omega)$ and $K_O\in L^{n-1}_{loc}(\Omega)$ then the ellipticity condition \eqref{elliptic:out:in} implies
\begin{align*}
\Big(\int_B |\nabla g|^{n-1}\,dx\Big)^{1/(n-1)}&\leq\Big( \int_B|\nabla g|^nK_O^{-1}\,dx\Big)^{1/n}\Big(\int_B K_O^{n-1}\,dx\Big)^{1/n(n-1)}\leq C\|\nabla g\|_{G^{-1}}
\end{align*}
(see \cite{MR1241287} for similar embeddings).
\end{remark}
\begin{proof} We will show that each of the coordinate functions $f=(f^1,\ldots,f^n)$ is a weak solution to the equation $\Lap_nu=-\Div(\langle A\nabla u, \nabla u\rangle^{\frac{n-2}{2}}A\nabla u)=0,$ where $A(x)=G(x)^{-1}$.  Fix a coordinate function $f^i$, and let
$u_k$ be a sequence of functions in $\text{Lip}(\overline{\Omega})$
converging to $f^i$ in the norm of $S^n(\Omega)$, and for the moment
let $\vp\in C_0^\infty$.  By Lemma \ref{lemma:convergence} we have
$$a_0^n(f^i,\vp)=\lim_{k\ra \infty} a^n_0(u_k,\vp)=
\int_\Omega \langle A\nabla f^i,\nabla f^i\rangle^{\frac{n-2}{2}}\langle A\nabla f^i,\nabla \vp\rangle\,dx.$$
Since $\nabla f^i$ is the distributional gradient,  a calculation
shows (see \cite[p. 379]{MR1859913}) that 
\[\langle A(x)\nabla f^i(x),\nabla f^i(x)\rangle^{\frac{n-2}{2}}
  A(x)\nabla f^i(x) =[\text{adj} \,Df(x)]_i
\]
where $[\text{adj}\, Df(x)]_i$ denotes the $i$-th column of the
adjoint matrix of $Df(x)$.  Since $f\in W^{1,n-1}_{loc}(\Omega,\R^n)$
it is well known that (see \cite[p. 256]{MR1241287} or
\cite[p. 66]{MR1859913})
$$\int_\Omega \langle[\text{adj}\, Df(x)]_i, \nabla \vp\rangle,  \,
dx=0 \qquad i=1,\ldots,n,$$
for all $\vp\in C_0^\infty(\Omega)$.  Since $A$ satisfies
\eqref{ellipinner}, by Theorem \ref{thm:main-carnot} we get the
desired result.
\end{proof}
Finally we end with an example of a mapping of finite distortion that is discontinuous on a set of measure zero whose inner distortion function belongs to $A_{n'}\cap RH_n$.
\begin{example} Let $B=B(0,1)=\{x\in \R^n: |x|<1\}$ and consider 
$$f(x)=\frac{x}{|x|} \exp(|x|^\varepsilon)$$
where $\varepsilon>0$ is a small quantity to be chosen later.  Notice
that there is no way to define $f$ so
that it is continuous at the origin.  Calculations show (see
\cite[chapter 6]{MR1859913} for more details) that
$$|Df(x)|=|x|^{-1}{\exp(|x|^\varepsilon)} \quad \text{and} \quad J_f(x)= \varepsilon|x|^{\varepsilon-n}\exp(n|x|^\varepsilon).$$
We have that $f\in W^{1,p}(B,\R^n)$ for all $1\leq p<n$ but $f\not\in
W^{1,n}(B,\R^n)$.  Moreover,
$$K_O(x)={\varepsilon^{-1}} {|x|^{-\varepsilon}}\quad \text{and} \quad K_I(x)=(\varepsilon^{-1} |x|^{-\varepsilon})^{n-1}.$$
If $\ep>0$ then neither $K_I$ nor $K_O$ belong to $Exp\, L$.  However, if $\varepsilon<\frac{1}{n-1}$ then $K_I\in A_{n'}\cap RH_n$ and $MK_I(x)\sim |x|^{-\varepsilon(n-1)};$
thus $f$ is discontinuous at $x=0$,  exactly where $MK_I=\infty$.

\end{example}

\bibliographystyle{plain}
\bibliography{continuity}

\def\cprime{$'$} \def\cprime{$'$} \def\cprime{$'$} \def\cprime{$'$}
\begin{thebibliography}{10}

\bibitem{MR2472875}
K.~Astala, T.~Iwaniec, and G.~Martin.
\newblock {\em Elliptic partial differential equations and quasiconformal
  mappings in the plane}, volume~48 of {\em Princeton Mathematical Series}.
\newblock Princeton University Press, Princeton, NJ, 2009.

\bibitem{MR2363343}
A.~Bonfiglioli, E.~Lanconelli, and F.~Uguzzoni.
\newblock {\em Stratified {L}ie groups and potential theory for their
  sub-{L}aplacians}.
\newblock Springer Monographs in Mathematics. Springer, Berlin, 2007.

\bibitem{MR2174915}
M.~Bramanti and L.~Brandolini.
\newblock Estimates of {BMO} type for singular integrals on spaces of
  homogeneous type and applications to hypoelliptic {PDE}s.
\newblock {\em Rev. Mat. Iberoamericana}, 21(2):511--556, 2005.

\bibitem{MR1239930}
L.~Capogna, D.~Danielli, and N.~Garofalo.
\newblock An embedding theorem and the {H}arnack inequality for nonlinear
  subelliptic equations.
\newblock {\em Comm. Partial Differential Equations}, 18(9-10):1765--1794,
  1993.

\bibitem{MR805809}
S~Chanillo and R.~L. Wheeden.
\newblock Weighted {P}oincar\'e and {S}obolev inequalities and estimates for
  weighted {P}eano maximal functions.
\newblock {\em Amer. J. Math.}, 107(5):1191--1226, 1985.

\bibitem{MR847996}
S.~Chanillo and R.~L. Wheeden.
\newblock Harnack's inequality and mean-value inequalities for solutions of
  degenerate elliptic equations.
\newblock {\em Comm. Partial Differential Equations}, 11(10):1111--1134, 1986.

\bibitem{cruz-diGironimo-sbordone-P}
D.~Cruz-Uribe, P.~di~Gironimo, and C.~Sbordone.
\newblock On the continuity of solutions to degenerate elliptic equations.
\newblock {\em J. Diff. Eq.}, 250(6):2671–--2686, 2011.

\bibitem{MR2508841}
D.~Danielli, N.~Garofalo, and N.~C. Phuc.
\newblock Inequalities of {H}ardy-{S}obolev type in {C}arnot-{C}arath\'eodory
  spaces.
\newblock In {\em Sobolev spaces in mathematics. {I}}, volume~8 of {\em Int.
  Math. Ser. (N. Y.)}, pages 117--151. Springer, New York, 2009.

\bibitem{MR672713}
L.~C. Evans.
\newblock A new proof of local {$C^{1,\alpha }$} regularity for solutions of
  certain degenerate elliptic p.d.e.
\newblock {\em J. Differential Equations}, 45(3):356--373, 1982.

\bibitem{MR643158}
E.~B. Fabes, C.~E. Kenig, and R.~P. Serapioni.
\newblock The local regularity of solutions of degenerate elliptic equations.
\newblock {\em Comm. Partial Differential Equations}, 7(1):77--116, 1982.

\bibitem{MR2228656}
F.~Ferrari.
\newblock Harnack inequality for two-weight subelliptic {$p$}-{L}aplace
  operators.
\newblock {\em Math. Nachr.}, 279(8):815--830, 2006.

\bibitem{MR1343563}
B.~Franchi, G.~Lu, and R.~L. Wheeden.
\newblock Representation formulas and weighted {P}oincar\'e inequalities for
  {H}\"ormander vector fields.
\newblock {\em Ann. Inst. Fourier (Grenoble)}, 45(2):577--604, 1995.

\bibitem{MR1354890}
B.~Franchi, G.~Lu, and R.~L. Wheeden.
\newblock Weighted {P}oincar\'e inequalities for {H}\"ormander vector fields
  and local regularity for a class of degenerate elliptic equations.
\newblock {\em Potential Anal.}, 4(4):361--375, 1995.
\newblock Potential theory and degenerate partial differential operators
  (Parma).

\bibitem{garcia-cuerva-rubiodefrancia85}
J.~Garc{\'{\i}}a-Cuerva and J.~L. Rubio~de Francia.
\newblock {\em Weighted norm inequalities and related topics}, volume 116 of
  {\em North-Holland Mathematics Studies}.
\newblock North-Holland Publishing Co., Amsterdam, 1985.

\bibitem{MR1814364}
D.~Gilbarg and N.~S. Trudinger.
\newblock {\em Elliptic partial differential equations of second order}.
\newblock Classics in Mathematics. Springer-Verlag, Berlin, 2001.
\newblock Reprint of the 1998 edition.

\bibitem{MR1683160}
P.~Haj{\l}asz and P.~Koskela.
\newblock Sobolev met {P}oincar\'e.
\newblock {\em Mem. Amer. Math. Soc.}, 145(688):x+101, 2000.

\bibitem{MR1207810}
J.~Heinonen, T.~Kilpel{\"a}inen, and O.~Martio.
\newblock {\em Nonlinear potential theory of degenerate elliptic equations}.
\newblock Oxford Mathematical Monographs. The Clarendon Press Oxford University
  Press, New York, 1993.
\newblock Oxford Science Publications.

\bibitem{MR1241287}
J.~Heinonen and P.~Koskela.
\newblock Sobolev mappings with integrable dilatations.
\newblock {\em Arch. Rational Mech. Anal.}, 125(1):81--97, 1993.

\bibitem{MR1833892}
T.~Iwaniec, P.~Koskela, and J.~Onninen.
\newblock Mappings of finite distortion: monotonicity and continuity.
\newblock {\em Invent. Math.}, 144(3):507--531, 2001.

\bibitem{MR1859913}
T.~Iwaniec and G.~Martin.
\newblock {\em Geometric function theory and non-linear analysis}.
\newblock Oxford Mathematical Monographs. The Clarendon Press Oxford University
  Press, New York, 2001.

\bibitem{MR1018575}
R.~Johnson and C.~J. Neugebauer.
\newblock Change of variable results for {$A_p$}- and reverse {H}\"older {${\rm
  RH}_r$}-classes.
\newblock {\em Trans. Amer. Math. Soc.}, 328(2):639--666, 1991.

\bibitem{MR2053566}
J.~Kauhanen, P.~Koskela, J.~Mal{\'y}, J.~Onninen, and X.~Zhong.
\newblock Mappings of finite distortion: sharp {O}rlicz-conditions.
\newblock {\em Rev. Mat. Iberoamericana}, 19(3):857--872, 2003.

\bibitem{MR1786735}
David Kinderlehrer and Guido Stampacchia.
\newblock {\em An introduction to variational inequalities and their
  applications}, volume~31 of {\em Classics in Applied Mathematics}.
\newblock Society for Industrial and Applied Mathematics (SIAM), Philadelphia,
  PA, 2000.
\newblock Reprint of the 1980 original.

\bibitem{MR721568}
J.~L. Lewis.
\newblock Regularity of the derivatives of solutions to certain degenerate
  elliptic equations.
\newblock {\em Indiana Univ. Math. J.}, 32(6):849--858, 1983.

\bibitem{MR1817225}
E.~H. Lieb and M.~Loss.
\newblock {\em Analysis}, volume~14 of {\em Graduate Studies in Mathematics}.
\newblock American Mathematical Society, Providence, RI, second edition, 2001.

\bibitem{MR2242021}
P.~Lindqvist.
\newblock {\em Notes on the {$p$}-{L}aplace equation}, volume 102 of {\em
  Report. University of Jyv\"askyl\"a Department of Mathematics and
  Statistics}.
\newblock University of Jyv\"askyl\"a, Jyv\"askyl\"a, 2006.

\bibitem{MR1202416}
G.~Lu.
\newblock Weighted {P}oincar\'e and {S}obolev inequalities for vector fields
  satisfying {H}\"ormander's condition and applications.
\newblock {\em Rev. Mat. Iberoamericana}, 8(3):367--439, 1992.

\bibitem{MR1294334}
J.~J. Manfredi.
\newblock Weakly monotone functions.
\newblock {\em J. Geom. Anal.}, 4(3):393--402, 1994.

\bibitem{MR839035}
G.~Modica.
\newblock Quasiminima of some degenerate functionals.
\newblock {\em Ann. Mat. Pura Appl. (4)}, 142:121--143 (1986), 1985.

\bibitem{monticelli-rodney-wheedenP}
D.~D. Monticelli, S.~Rodney, and R.~L. Wheeden.
\newblock Boundedness of weak solutions of degenerate quasilinear equations
  with rough coefficients.
\newblock {\em preprint}, 2011.

\bibitem{MR793239}
A.~Nagel, E.~M. Stein, and S.~Wainger.
\newblock Balls and metrics defined by vector fields. {I}. {B}asic properties.
\newblock {\em Acta Math.}, 155(1-2):103--147, 1985.

\bibitem{MR2574880}
E.~T. Sawyer and R.~L. Wheeden.
\newblock Degenerate {S}obolev spaces and regularity of subelliptic equations.
\newblock {\em Trans. Amer. Math. Soc.}, 362(4):1869--1906, 2010.

\bibitem{MR1011673}
J.-O. Str{\"o}mberg and A.~Torchinsky.
\newblock {\em Weighted {H}ardy spaces}, volume 1381 of {\em Lecture Notes in
  Mathematics}.
\newblock Springer-Verlag, Berlin, 1989.

\bibitem{MR0244628}
N.~N. Ural{\cprime}ceva.
\newblock Degenerate quasilinear elliptic systems.
\newblock {\em Zap. Nau\v cn. Sem. Leningrad. Otdel. Mat. Inst. Steklov.
  (LOMI)}, 7:184--222, 1968.

\bibitem{MR0414869}
S.~K. Vodop{\cprime}janov and V.~M. Gol{\cprime}d{\v{s}}te{\u\i}n.
\newblock Quasiconformal mappings, and spaces of functions with first
  generalized derivatives.
\newblock {\em Sibirsk. Mat. \v Z.}, 17(3):515--531, 715, 1976.

\bibitem{MR1721674}
S.~K. Vodop{\cprime}yanov.
\newblock Mappings with bounded distortion and with finite distortion on
  {C}arnot groups.
\newblock {\em Sibirsk. Mat. Zh.}, 40(4):764--804, i, 1999.

\bibitem{MR2435212}
X.~Zhong.
\newblock Discontinuous solutions of linear, degenerate elliptic equations.
\newblock {\em J. Math. Pures Appl. (9)}, 90(1):31--41, 2008.

\end{thebibliography}

\end{document}